\documentclass[11pt,a4paper]{article}

\usepackage{amsfonts}
\usepackage{amsthm}
\usepackage{amsmath}
\usepackage{amssymb}
\usepackage{amscd}
\usepackage{mathrsfs}
\usepackage[mathscr]{eucal}
\usepackage{graphicx}
\usepackage{epstopdf} 
\usepackage{pict2e}
\usepackage{epic}
\usepackage{xcolor}
\usepackage{float}
\usepackage{mathtools}
\usepackage{centernot}

\usepackage[margin=1cm, font=small, labelfont=bf]{caption}

\usepackage{cite}
\usepackage{hyperref}
\hypersetup{colorlinks=true, urlcolor= black, linkcolor=black, citecolor=blue}

\usepackage[utf8]{inputenc}
\usepackage[T1]{fontenc}
\usepackage{lmodern}
\usepackage{dsfont}
\usepackage{indentfirst}

\usepackage{hyperref}   
\hypersetup{
    linktoc=page,
    linkcolor=blue,          
    citecolor=blue,        
    filecolor=blue,      
    urlcolor=cyan,
   colorlinks=true           
}

\numberwithin{equation}{section}

\theoremstyle{plain}
\newtheorem{Thm}{Theorem}[section]
\newtheorem{Lem}[Thm]{Lemma}
\newtheorem{Claim}[Thm]{Claim}
\newtheorem{Coro}[Thm]{Corollary}
\newtheorem{Prop}[Thm]{Proposition}

\theoremstyle{definition}
\newtheorem{Def}[Thm]{Definition}

\newtheorem{Rem}[Thm]{Remark}

\newtheorem*{Acknowledgements}{Acknowledgements}
\newtheorem*{Org}{Organisation of the paper}
\newtheorem*{Nota}{Notations}
\newtheorem*{FKG}{The FKG inequality}
\newtheorem*{BK}{The BK inequality}

\usepackage[margin=2.3cm]{geometry}

\definecolor{darkgreen}{rgb}{0,0.6,0.05}

\newcommand{\connect}{\xleftrightarrow}

\title{On reversing the Simon--Lieb inequality in high-dimensional percolation}

\begin{document}

\author{Romain Panis\footnotemark[1]\footnote{Universit\'e Lyon 1, Centrale Lyon, INSA Lyon, Universit\'e Jean Monnet, CNRS, ICJ UMR 5208, 69622, Villeurbanne, France, \url{panis@math.univ-lyon1.fr}, \url{schapira@math.univ-lyon1.fr}}\:, Bruno Schapira\footnotemark[1]}

\date{\vspace{-5ex}}

\maketitle

\abstract{We study Bernoulli percolation on $\mathbb Z^d$ in dimensions ${d>6}$. We prove that a classical consequence of the van den Berg--Kesten inequality, often referred to as the Simon--Lieb inequality in the context of the Ising model, admits a partial reversal. As a main application, we show that the quantity $\varphi_{p_c}(S)$, introduced by Duminil-Copin and Tassion (Comm.\ Math.\ Phys., 2016), is uniformly bounded over all $S\subset \mathbb Z^d$. This partial reversal further yields a short and self-contained route to several key  results, including near-critical estimates on the two-point function and sharp bounds on the critical one-arm probability. 
}


\section{Introduction}\label{sec:intro}

Correlation inequalities play a central role in the study of percolation models. They underpin much of the progress in the field, and the discovery of new ones may well prove decisive in resolving major open problems \cite{kozma2024reduction}. Conversely, settings lacking such tools are notoriously difficult to analyse, see for instance \cite{beffara2025new}. In the context of Bernoulli (independent) percolation, a rich collection of correlation inequalities is available. Among the most fundamental are the \emph{Fortuin--Kasteleyn--Ginibre \textup{(FKG)} inequality} and the \emph{van den Berg--Kesten \textup{(BK)} inequality} \cite{van1985inequalities} (see also \cite{reimer2000proof}). Precise statements are given below, we refer the reader to \cite{GrimmettPercolation1999} for proofs and further background.

The purpose of this work is to study a specific consequence of the BK inequality, commonly referred to as the \emph{Simon--Lieb inequality} by analogy with spin models \cite{SimonInequalityIsing1980,LiebImprovementSimonInequality}. We show that this inequality admits a \emph{partial} reversal. This result is of independent interest, but its full strength emerges in applications, where it provides a new structural tool for the analysis of the mean-field regime of Bernoulli percolation. We state the main results of the paper after introducing some notation and recalling relevant classical results.

\vspace{5pt}

We consider percolation on $\mathbb Z^d$ with $d\geq 2$. We fix a set $\mathcal E$ of edges on $\mathbb Z^d$, i.e.\ a subset of $\{\{x,y\}:x,y\in \mathbb Z^d\}$. If $x,y\in \mathbb Z^d$, we write $x\sim y$ to say that $\{x,y\}\in \mathcal E$.  Given $p\in [0,1]$, we construct a random subgraph of $\mathcal G=(\mathbb Z^d,\mathcal E)$ by independently \emph{opening} (resp. \emph{closing}) each edge of $\mathcal E$ with probability $p$ (resp. $1-p$). The associated measure is denoted by $\mathbb P_p$ (and we write $\mathbb E_p$ for the expectation with respect to $\mathbb P_p$). We consider the following choices for $\mathcal E$:
\begin{enumerate}
	\item[$\bullet$] \emph{Nearest-neighbour} model: $\mathcal E=\{\{x,y\}: \Vert x-y\Vert_1=1\}$ where $\Vert \cdot\Vert_1$ is the $\ell^1$ norm on $\mathbb R^d$;
	\item[$\bullet$] Spread-out model with \emph{spread} parameter $L\geq 1$: $\mathcal E=\{\{x,y\}: \Vert x-y\Vert_1\leq L\}$.
\end{enumerate}
We often analyse the restriction of $\mathbb P_p$ to a subset $S\subset \mathbb Z^d$. By this, we mean that we look at Bernoulli percolation on the graph $(S,E(S))$ where $E(S):=\{\{x,y\}\in \mathcal E: x,y\in S\}$.
 
It is classical (see \cite{GrimmettPercolation1999,DuminilLecturesOnIsingandPottsModels2019}) that the model undergoes a \emph{phase transition} as the parameter $p$ varies:  one has $p_c\in(0,1)$ with 
\begin{equation}
	p_c:=\inf\Big\{p\in [0,1]: \mathbb P_p[0\connect{}\infty]>0\Big\},
\end{equation}
where $\{0\connect{}\infty\}$ is the event that the origin lies in an infinite connected component. 

An important quantity in the analysis of Bernoulli percolation is the so-called \emph{restricted two-point function}. It is defined as follows: if $S\subset \mathbb Z^d$, $p\in (0,1)$, and $x,y\in \mathbb Z^d$,
\begin{equation}
	\tau_{S,p}(x,y):=\mathbb P_p[x\connect{S\:}y],
\end{equation}
where $\{x\connect{S\:}y\}$ is the event that there exists an open path fully contained in $S$ which connects $x$ and $y$. When $S=\mathbb Z^d$, we drop it from the above notation.

We are interested in the emergence, at and near criticality, of \emph{mean-field} properties. This means that we work in sufficiently large dimensions. Mean-field behaviour is predicted to occur in dimensions $d>6$ (and also \emph{marginally} at $d=6$). We refer to \cite{ChayesChayesUpperCritDimPerco1987,GrimmettPercolation1999,SladeSaintFlourLaceExpansion2006,panis2024applications,hutchcroft2025dimension} and references therein for more information on the particular role played by the dimension $d=6$ in percolation theory.

In the case $S=\mathbb Z^d$ and $p=p_c$, it is predicted that, in dimensions $d>6$, for every $x,y\in \mathbb Z^d$,
\begin{equation}\tag{$*$}\label{eq: 2pt full space estimate}
	\tau_{p_c}(x,y)\asymp \frac{1}{(1\vee|x-y|)^{d-2}},
\end{equation}
where $\asymp$ means that the ratio of the two quantities is bounded away from $0$ and infinity by two constants which only depend on $d$ (and potentially on the spread parameter $L$), and where $|\cdot|$ denotes the $\ell^\infty$ norm on $\mathbb R^d$. 

The \emph{lace expansion} approach introduced by Brydges and Spencer \cite{BrydgesSpencerSAW} (see \cite{SladeSaintFlourLaceExpansion2006} for a review) has been successfully implemented to derive a more precise version of \eqref{eq: 2pt full space estimate} for nearest-neighbour percolation in dimensions $d>10$ \cite{HaraSlade1990Perco,HaraDecayOfCorrelationsInVariousModels2008,FitznervdHofstad2017Percod10}, and sufficiently spread-out percolation (i.e. $L\gg 1$) in dimensions $d>6$ \cite{HaraSladevdHofstad2003PercoSO}. A recent alternative proof of \eqref{eq: 2pt full space estimate} in the latter setting was obtained in \cite{DumPan24Perco}. 

As usual in the study of high-dimensional Bernoulli percolation, we work under the assumption that $d>6$ and \eqref{eq: 2pt full space estimate} holds.

\begin{Nota} We let $\Vert \cdot \Vert$ (resp. $|\cdot |$) denote the standard Euclidean norm (resp. the $\ell^\infty$ norm) on $\mathbb R^d$. If $x\in \mathbb R^d$ and $X\subset \mathbb R^d$, we let $\mathrm{d}(x,X):=\inf_{y\in X}|x-y|$. We define $\mathbb S^{d-1}:=\{x\in \mathbb R^d: \Vert x\Vert=1\}$. We denote by $\mathbf{e}_i$ the element of the canonical basis of $i$-th coordinate equal to $1$. If $f,g>0$, we write $f\lesssim g$ (or $g\gtrsim f$) if there exists $C>0$, which only depends on $d$ (and potentially on the spread parameter $L$) such that $f\leq Cg$. If $f\lesssim g$ and $g\lesssim f$, we write $f\asymp g$.

If $S\subset \mathbb Z^d$, we let $\partial S:=\{x\in S: \exists y\notin S, \: x\sim y\}$. We also let $\textup{diam}(S):=\max\{|x-y|:x,y\in S\}$. For $z\in \mathbb Z^d$ and $r\ge 0$, we denote the box of radius $r$ centered at $z$ as 
\begin{equation}
\Lambda_r(z) = \{y\in \mathbb Z^d : |y-z |\le r\},
\end{equation}
and just write $\Lambda_r$ when $z$ is the origin.

Given $A,B,C\subset \mathbb Z^d$, we write $\{A\connect{C\:}B\}$ for the event that there exists an open path in $C$ connecting $A$ and $B$, and we omit the superscript $C$ when $C= \mathbb Z^d$. 

\end{Nota}

\begin{FKG} The FKG inequality states that, for every $p\in [0,1]$ and every \emph{increasing} events (i.e.~events that are stable under the action of opening edges) $E$ and $F$, one has
\begin{equation}\label{eq:FKG}\tag{FKG}
	\mathbb P_p[E\cap F]\geq \mathbb P_p[E]\cdot \mathbb P_p[F].
\end{equation}
\end{FKG}

\begin{BK} If $E$ and $F$ are two percolation events, we write $E\circ F$ for the event of \emph{disjoint} occurrence of $E$ and $F$, i.e.~the event that there exist two disjoint sets $\mathcal I$ and $\mathcal J$ of edges such that the configuration restricted to $\mathcal I$ (resp.~$\mathcal J$) is sufficient to decide that $E$ (resp.~$F$) occurs. The BK inequality states that, for every $p\in [0,1]$ and every increasing events $E$ and $F$, one has
\begin{equation}\label{eq:BK ineq}
\mathbb P_p[E\circ F]\le \mathbb P_p[E]\cdot\mathbb P_p[F].\tag{BK}
\end{equation}
\end{BK}

\subsection{A partially reversed Simon--Lieb inequality}

The central object of this paper is the Simon--Lieb inequality, which provides a way to compare restricted two-point functions involving different sets. 
 
 \begin{Lem}[Simon--Lieb inequality]\label{lem:SL} Let $d\geq 2$. Let $S\subset \Lambda\subset \mathbb Z^d$ and $p\in (0,1)$. For every $o\in S$ and $x\in \Lambda$,
 \begin{equation}
 		\tau_{\Lambda,p}(o,x)\leq \tau_{S,p}(o,x)+\sum_{\substack{u\in S\\v\notin S\\u\sim v}}\tau_{S,p}(o,u) \cdot p \cdot \tau_{\Lambda,p}(v,x).
 \end{equation}
 \end{Lem}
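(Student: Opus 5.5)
We split according to whether the connection from $o$ to $x$ inside $\Lambda$ can be realised inside $S$. Write
\begin{equation*}
\{o\connect{\Lambda\:}x\}\subset\{o\connect{S\:}x\}\cup\Big(\{o\connect{\Lambda\:}x\}\setminus\{o\connect{S\:}x\}\Big).
\end{equation*}
The first event has probability $\tau_{S,p}(o,x)$. It therefore suffices to bound the probability of the second event by the sum over boundary edges.

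\textbf{Decomposition along the first exit edge.} On the second event, fix an open self-avoiding path $\gamma$ in $\Lambda$ from $o$ to $x$. Since $o\in S$ and the event $\{o\connect{S\:}x\}$ fails, $\gamma$ must leave $S$: if $x\notin S$ this is clear, and if $x\in S$ it holds because otherwise $\gamma$ would lie in $S$. Let $\{u,v\}$ be the first edge of $\gamma$ with $u\in S$ and $v\notin S$. Then:
\begin{itemize}
\item the portion of $\gamma$ from $o$ to $u$ is an open path inside $S$ (every vertex before $v$ lies in $S$), so $\{o\connect{S\:}u\}$ occurs;
\item the edge $\{u,v\}$ is open;
\item the portion of $\gamma$ from $v$ to $x$ is an open path in $\Lambda$, so $\{v\connect{\Lambda\:}x\}$ occurs.
\end{itemize}
Because $\gamma$ is self-avoiding, these three pieces use pairwise disjoint sets of edges. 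Hence
\begin{equation*}
\{o\connect{\Lambda\:}x\}\setminus\{o\connect{S\:}x\}\subset\bigcup_{\substack{u\in S,\ v\notin S\\ u\sim v}}\{o\connect{S\:}u\}\circ\{\{u,v\}\text{ open}\}\circ\{v\connect{\Lambda\:}x\}.
\end{equation*}

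\textbf{Conclusion.} All three events are increasing. A union bound followed by \eqref{eq:BK ineq}, applied twice, gives for each boundary edge the bound $\tau_{S,p}(o,u)\cdot p\cdot\tau_{\Lambda,p}(v,x)$. This is exactly the claimed inequality.

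\textbf{Main subtlety.} The only point needing care is the disjointness of the witnesses. Working with a single self-avoiding path, rather than with clusters, handles it directly: the three edge sets are the three segments of $\gamma$, which are disjoint. Note that we do not need $\{o\connect{S\:}x\}$ to fail in the disjoint-occurrence statement itself. That failure is used only to guarantee that an exit edge exists.
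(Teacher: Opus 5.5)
Your proof is correct and follows essentially the same route as the paper: the same inclusion of $\{o\connect{\Lambda\:}x\}\setminus\{o\connect{S\:}x\}$ into a union over boundary edges of triple disjoint occurrences, followed by a union bound and \eqref{eq:BK ineq}. The paper simply asserts this inclusion; your first-exit-edge argument along a self-avoiding open path supplies the justification it leaves implicit.
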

This result is an easy consequence of the BK inequality (see \cite{DuminilTassionNewProofSharpness2016,duminil2017new}). A related inequality (proved by exploration arguments) was derived by Hammersley \cite{hammersley1957percolation}, see the discussion below. We include its proof in Section \ref{sec:SL and its reverse proof} for completeness.
 
 Lemma \ref{lem:SL} plays a central role in the seminal work of Duminil-Copin and Tassion \cite{DuminilTassionNewProofSharpness2016} (see also \cite{SimonInequalityIsing1980} in the context of the Ising model). There, it is used to give an alternative proof of the \emph{sharpness} of the phase transition \cite{Mensikov1986coincidence,AizenmanBarsky1987sharpnessPerco}, which asserts that $p_c$ (as defined above) also corresponds to the threshold beyond which exponential decay of $\tau_p(0,x)$ stops holding. The Simon--Lieb inequality provides a \emph{finite-size criterion} for exponential decay: if there exists a \emph{finite} $S\subset \mathbb Z^d$ containing $0$ such that
 \begin{equation}\label{eq:def phi(S)}
 	\varphi_p(S):=\sum_{\substack{u\in S\\v\notin S\\u\sim v}}\tau_{S,p}(0,u)\cdot p <1,
 \end{equation}
then, iterations of Lemma \ref{lem:SL} imply that exponential decay holds. Remarkably, such an argument can already be found in one of the earliest contributions to percolation theory, due to Hammersley \cite{hammersley1957percolation}. Since the quantity $\varphi_p(S)$ is continuous in $p$, this gives that the set of $p$ for which exponential decay holds is an \emph{open} subset of $[0,1]$. As a consequence, this inequality is not expected to hold for more general percolation models\footnote{For instance, it cannot hold for the free measure of the two-dimensional random-cluster model with cluster weight $q>4$ \cite{duminil2021discontinuity}, since exponential decay of correlations fails to be an open property in that setting.}. The quantity $\varphi_p(S)$ is also interesting on its own, and we return to it below. 
 
In view of the importance of Lemma \ref{lem:SL}, it is natural to ask how sharp this inequality is. Our first result provides a partial answer.

\begin{Thm}[Partially reversed Simon--Lieb inequality]\label{thm:pre reversed SL}  Let $d\geq 2$. Let $S\subset \Lambda\subset \mathbb Z^d$ and $p\in (0,1)$. For every $o\in S$ and $x\in \Lambda$,
\begin{equation}\label{eq:reversed SL with C(u)}
	\tau_{\Lambda,p}(o,x)\geq \tau_{S,p}(o,x)+ \sum_{\substack{u\in S\\v\notin S\\u \sim v}}\mathbb E_p\Big[\mathds{1}\{o\connect{S\:}u\} \cdot p \cdot \tau_{\Lambda\setminus\mathcal C(u;\Lambda),p}(v,x)\Big],
\end{equation}
where $\mathcal C(u;\Lambda)$ denotes the cluster of $u$ in $\Lambda$.
\end{Thm}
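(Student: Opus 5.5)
The plan is to write the difference $\tau_{\Lambda,p}(o,x)-\tau_{S,p}(o,x)$ as the probability of an event, and to show that this event contains a disjoint union, indexed by the boundary edges $\{u,v\}$ with $u\in S$, $v\notin S$, of events whose probabilities are bounded below by the summands in \eqref{eq:reversed SL with C(u)}. For a configuration $\omega$ and an edge $e=\{u,v\}$, write $\omega^{e}$ for $\omega$ with $e$ closed. Let $\mathcal C^{e}(u;\Lambda)$ be the cluster of $u$ in $\Lambda$ computed in $\omega^{e}$. Define $F_{uv}$ to be the event on which the following hold:
\begin{itemize}
\item $e$ is open in $\omega$;
\item in $\omega^{e}$ we have $o\connect{S\:}u$ and $v\notin\mathcal C^{e}(u;\Lambda)$;
\item $v$ is connected to $x$ by an open path of $\omega^{e}$ inside $\Lambda\setminus \mathcal C^{e}(u;\Lambda)$.
\end{itemize}
On $F_{uv}$ the edge $e$ is pivotal for $\{o\connect{\Lambda\:}x\}$. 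Indeed, it is open and joins the two pieces, so $o$ is connected to $x$ in $\Lambda$. When $e$ is closed, the cluster of $o$ in $\Lambda$ is $\mathcal C^{e}(u;\Lambda)$, and this cluster does not contain $x$: the cluster of $v$ in $\omega^{e}$ avoids $\mathcal C^{e}(u;\Lambda)$, and that cluster contains $x$.

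The first step is to check two facts. First, $F_{uv}\subset\{o\connect{\Lambda\:}x\}\setminus\{o\connect{S\:}x\}$. A pivotal edge lies on every open path from $o$ to $x$ in $\Lambda$. So if $o\connect{S\:}x$ held, a pivotal edge would have to be an edge of $S$, which $\{u,v\}$ is not since $v\notin S$.

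Second, the events $F_{uv}$ are pairwise disjoint. The pivotal edges for $o\connect{\Lambda\:}x$ are linearly ordered along any open path from $o$ to $x$. Suppose $F_{uv}$ and $F_{u'v'}$ both occur and, say, $\{u,v\}$ comes before $\{u'v'\}$ in this order. Then, after closing $\{u',v'\}$, the vertex $u$ is still connected to $o$ within $S$, hence $u\in\mathcal C^{\{u',v'\}}(u';\Lambda)$. But $v$ is connected to $u$ through the open edge $\{u,v\}$, which is not the closed edge $\{u',v'\}$. So $v$ also lies in $\mathcal C^{\{u',v'\}}(u';\Lambda)$, which is the cluster of $o$ after closing $\{u',v'\}$. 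This contradicts the fact that $\{u,v\}$ is pivotal: it is open, yet $o$ is connected to $v$ in $\Lambda$ without it. (The case with the order reversed is symmetric.) Hence
\[
\tau_{\Lambda,p}(o,x)\ge\tau_{S,p}(o,x)+\sum_{\{u,v\}}\mathbb P_p[F_{uv}].
\]

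The second step computes $\mathbb P_p[F_{uv}]$. The state of $e$ is independent of $\omega^{e}$, so $\mathbb P_p[F_{uv}]=p\,\mathbb P_p[A_{uv}]$, where $A_{uv}$ is the remaining (second and third) conditions, evaluated in $\omega^{e}$. Now condition on $\mathcal C^{e}(u;\Lambda)=C$ for a fixed connected set $C\ni u$ with $v\notin C$. This event is measurable with respect to the edges with at least one endpoint in $C$, and $\{o\connect{S\:}u\}$ is determined by $C$. The edges of $\Lambda\setminus C$ are independent of these and still carry $\mathbb P_p$. Therefore
\[
\mathbb P_p[A_{uv}]=\mathbb E_p\big[\mathds 1\{o\connect{S\:}u \text{ in }\omega^{e}\}\,\tau_{\Lambda\setminus\mathcal C^{e}(u;\Lambda),p}(v,x)\big].
\]

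The last step compares this with the expectation in \eqref{eq:reversed SL with C(u)}, which uses the cluster of $u$ in $\omega$ itself. If $e$ is closed in $\omega$, the two clusters coincide. If $e$ is open, then $v\in\mathcal C(u;\Lambda)$ and $\tau_{\Lambda\setminus\mathcal C(u;\Lambda),p}(v,x)=0$. Hence the expectation in \eqref{eq:reversed SL with C(u)} equals the same expectation restricted to $\{e\text{ closed}\}$, which is at most $\mathbb P_p[A_{uv}]$. This gives the theorem.

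The main obstacle I expect is the disjointness step: making precise the ordering of pivotal edges and checking carefully that two distinct boundary edges cannot both satisfy the definition of $F_{uv}$. The conditioning step is routine but requires care about exactly which edges are revealed by $\{\mathcal C^{e}(u;\Lambda)=C\}$.
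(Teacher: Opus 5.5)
Your route is the paper's. $F_{uv}$ is its event $\mathcal E_{uv}$ written in the form of observation (ii), and you then prove the inclusion, prove disjointness, and condition on the cluster of $u$ in $\omega^{e}$, as the paper does. Your last step is correct, and in fact more accurate than the paper's display. The integrand with $\mathcal C(u;\Lambda)$ vanishes when $e$ is open, so replacing $\mathcal C^{[uv]}(u;\Lambda)$ by $\mathcal C(u;\Lambda)$ multiplies the expectation by $1-p$. That step is therefore an inequality, not the equality written there.

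The problem is the disjointness step, which the paper only sketches with a figure. Write $e=\{u,v\}$ and $e'=\{u',v'\}$. You show $v\in\mathcal C^{e'}(u';\Lambda)$ and claim this contradicts the pivotality of $e$, because $o$ would be connected to $v$ ``without it''. But the connection you exhibit is the $S$-path from $o$ to $u$ followed by $e$ itself, so it says nothing about $\omega^{e}$. In fact, if $e$ precedes $e'$ among the pivotal edges, both endpoints of $e$ must lie in the cluster of $o$ after closing $e'$. So what you derived is exactly what one expects, and your ordering hypothesis is never used.

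The missing observation is that neither $e$ nor $e'$ is an edge of $S$. So on $F_{uv}\cap F_{u'v'}$, both $o\connect{S\:}u$ and $o\connect{S\:}u'$ hold in each of $\omega$, $\omega^{e}$, $\omega^{e'}$. The fix then runs as follows:
\begin{itemize}
\item Since $o\connect{S\:}u'$ in $\omega^{e}$, we get $u'\in\mathcal C^{e}(u;\Lambda)$.
\item Hence the open path $\pi$ from $v$ to $x$ inside $\Lambda\setminus\mathcal C^{e}(u;\Lambda)$ avoids $u'$, and so does not use $e'$.
\item Concatenating the $S$-path from $o$ to $u$, the edge $e\neq e'$, and $\pi$ gives a path in $\Lambda$ from $o$ to $x$ that is open in $\omega^{e'}$.
\item So $x\in\mathcal C^{e'}(u';\Lambda)$, contradicting $F_{u'v'}$.
\end{itemize}
No ordering of pivotal edges is needed. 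If you want to keep the ordering, you must close the \emph{earlier} edge $e$. Then $u'$ stays joined to $o$ inside $S$, while the part after $e'$ of a self-avoiding open $o$--$x$ path avoids $e$.

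Two smaller points. First, $\{o\connect{S\:}u\}$ is not a function of the vertex set $C$. Condition on the edge cluster as the paper does, or simply note that $\{\mathcal C^{e}(u;\Lambda)=C\}\cap\{o\connect{S\:}u\}$ depends only on edges touching $C$. Second, for infinite $\Lambda$ a sum over fixed sets $C$ misses infinite clusters. This is why the paper first treats finite $\Lambda$ and then passes to the limit.
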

To the best of our knowledge, this result is new. We present its (short) proof in Section~\ref{sec:SL and its reverse proof}. 
The terminology ``partially reversed'' refers to the fact that $\tau_{\Lambda\setminus \mathcal C(u;\Lambda),p}$ appears in place of $\tau_{\Lambda,p}$ on the right-hand side of \eqref{eq:reversed SL with C(u)}. To make this bound \emph{effective}, one must understand the impact of restricting the two-point function to the complement of $\mathcal C(u;\Lambda)$. As it turns out, this can be achieved in dimensions $d>6$. The key intuition is that, in this regime, the cluster of $u$ is typically not too large near $u$, so that $\tau_{\Lambda\setminus \mathcal C(u;\Lambda),p}(v,x)$ is comparable to $\tau_{\Lambda,p}(v,x)$. Establishing this directly is generally restricted to the case $\Lambda = \mathbb{Z}^d$ and $p = p_c$ thanks to \eqref{eq: 2pt full space estimate}. Beyond this regime, however, suitably averaged versions of Theorem~\ref{thm:pre reversed SL} allow one to exploit the same intuition. We provide more details in Section \ref{sec:strategy of proof}. 

The following result gives a more effective version of Theorem~\ref{thm:pre reversed SL} in the case $\Lambda = \mathbb{Z}^d$ and $p = p_c$.%
 
\begin{Prop}\label{prop:reversed SL} Let $d>6$ and assume that \eqref{eq: 2pt full space estimate} holds. Let $\varepsilon>0$. There exists $c=c(\varepsilon,d)>0$ such that, for every $S\subset \mathbb Z^d$ finite and every $o\in S$ and $x\in \mathbb Z^d$, 
\begin{equation}\label{eq:reversed SL}
	\tau_{p_c}(o,x)\geq c\sum_{\substack{u\in S: \: |x-u|\geq \varepsilon|x|\\v\notin S\\u\sim v}}\tau_{S,p_c}(o,u)\cdot p_c\cdot\tau_{p_c}(v,x).
\end{equation}
\end{Prop}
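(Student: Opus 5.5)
We start from Theorem~\ref{thm:pre reversed SL} with $\Lambda=\mathbb Z^d$ and $p=p_c$. Dropping the term $\tau_{S,p_c}(o,x)$ and keeping only the pairs $(u,v)$ with $|x-u|\ge\varepsilon|x|$, we obtain
\begin{equation*}
\tau_{p_c}(o,x)\ge \sum_{\substack{u\in S:\,|x-u|\ge\varepsilon|x|\\ v\notin S,\,u\sim v}} p_c\,\mathbb E_{p_c}\Big[\mathds 1\{o\connect{S\:}u\}\,\tau_{\mathbb Z^d\setminus\mathcal C(u),p_c}(v,x)\Big].
\end{equation*}
For a fixed set $C$ containing $u$ (so $v\notin C$ unless $v\in\mathcal C(u)$), we write
\begin{equation*}
\tau_{\mathbb Z^d\setminus C}(v,x)=\tau(v,x)-\mathbb P_{p_c}[v\connect{}x \text{ only through } C],
\end{equation*}
and bound the last probability by $\sum_{w\in C}\tau(v,w)\tau(w,x)$ using a union bound over the first vertex of $C$ hit by the path. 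By BK, on $\{o\connect{S\:}u\}$ we have
\begin{equation*}
\mathbb E\Big[\mathds 1\{o\connect{S}u\}\sum_{w\in\mathcal C(u)}\tau(v,w)\tau(w,x)\Big]\le \sum_{w}\sum_{z}\tau_S(o,z)\,\tau(z,u)\,\tau(z,w)\,\tau(v,w)\tau(w,x).
\end{equation*}
Here we use a tree-graph bound: the path $o\to u$ in $S$ and the connection $u\to w$ share some branching point $z$. The event $v\in\mathcal C(u)$ is handled similarly, giving a term bounded by $\tau(u,v)\le 1$ times the same kind of sum.

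The goal is to show the subtracted term is at most $\tfrac12\tau_S(o,u)\tau(v,x)$ summed over $(u,v)$, or at least is controlled after summing. This cannot hold pointwise with a small constant, because near $u$ the diagram $\sum_w\tau(v,w)\tau(w,u)$ is of order one (the bubble-like term $\sum_w \tau(u,w)\tau(w,v)$ with $v\sim u$ is not small). We therefore avoid subtraction. Instead we lower bound $\tau_{\mathbb Z^d\setminus\mathcal C(u)}(v,x)$ by $\tau(v,x)$ times the probability that $\mathcal C(u)$ stays in a small ball $\Lambda_r(u)$ and that $v$ escapes that ball avoiding it. Concretely, fix $r$ large but constant. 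By FKG and finite energy, there is a local modification: with probability $\ge c(r)$, the edges touching $v$ other than one chosen edge $\{v,v'\}$ pointing away from $u$ are arranged so that $v$'s connection leaves $\Lambda_r(u)$ disjointly from $\mathcal C(u)$. The remaining issue is that $\mathcal C(u)$ may itself be large, since it contains $o$.

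The main obstacle is this interaction with the conditioning on $o\connect{S}u$: $\mathcal C(u)$ is not small, since it reaches $o$ and may spread everywhere. My plan is to use the bound via the tree-graph diagram above, but only for $w$ far from $u$, with $|w-u|\ge r$. For such $w$ the diagram
\begin{equation*}
\sum_{z}\sum_{|w-u|\ge r}\tau_S(o,z)\tau(z,u)\tau(z,w)\tau(v,w)\tau(w,x)
\end{equation*}
is bounded using \eqref{eq: 2pt full space estimate} and $d>6$. The part with $z$ near $u$ gives $\tau_S(o,u)$ times a triangle-tail of order $r^{-(d-6)}$. This is an $\varepsilon$-small multiple of $\tau(v,x)$, using $|w-x|$ or $|u-x|\gtrsim\varepsilon|x|$, which is exactly where the hypothesis $|x-u|\ge\varepsilon|x|$ enters. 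The part with $z$ far from $u$ costs $\tau(z,u)$ small. However, it replaces $\tau_S(o,u)$ by $\tau_S(o,z)$, which is not comparable to $\tau_S(o,u)$. This is the delicate point. I would handle it by noting that the summed contributions over $u$ with $z$ fixed are again bounded by $\sum_z\tau_S(o,z)$ times the boundary structure, though I expect this to require some care or a slightly weaker averaged statement. Near $u$, within $\Lambda_r(u)$, the local surgery with constant cost $c(r)$ handles the rest. Choosing $r$ large depending on $\varepsilon$ then yields the constant $c(\varepsilon,d)$.
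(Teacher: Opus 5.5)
\textbf{There is a genuine gap, and it sits exactly at the step you flag as delicate. That step is the entire content of the statement.}

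First, the statement is equivalent to Theorem~\ref{thm:main}. Take $o=0$, a finite $S\ni 0$, any fixed $\varepsilon<1$, and let $|x|\to\infty$. Then every $u\in\partial S$ satisfies $|x-u|\ge\varepsilon|x|$, and $\tau_{p_c}(v,x)\asymp\tau_{p_c}(0,x)$ by \eqref{eq: 2pt full space estimate}. So \eqref{eq:reversed SL} directly gives $\varphi_{p_c}(S)\le C$. No ``slightly weaker averaged statement'' can therefore replace the missing step: you must prove the uniform bound on $\varphi_{p_c}(S)$ itself.

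Your tree-graph route cannot deliver that bound, for three reasons.
\begin{enumerate}
\item Every error term it produces is expressed through $\tau_{S,p_c}(o,z)$ at branching points $z\neq u$. This includes $z$ at moderate distance from $u$ inside $S$, so even your ``$z$ near $u$'' part is not $\tau_{S,p_c}(o,u)$ times a triangle tail. For a general, possibly rough, $S$ there is no a priori comparison between interior values $\tau_{S,p_c}(o,z)$ and boundary values $\tau_{S,p_c}(o,u)$.
\item FKG only gives $\tau_{S,p_c}(o,z)\tau_{S,p_c}(z,u)\le\tau_{S,p_c}(o,u)$. Using it discards the decay in $|z-u|$ that you need for summability over $z$.
\item Summing over $u$ first, as you suggest, gives $\sum_z\tau_{S,p_c}(o,z)$ times a weighted version of $\varphi_{p_c}(S-z)$. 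That is the expected size of the cluster of $o$ in $S$ (not uniformly bounded in $S$) multiplied by the very quantity you are trying to bound. This is precisely the ``additive error term'' obstruction described in the introduction.
\end{enumerate}

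The local-surgery step is also not valid as written. Conditionally on $\mathcal C(u)=A$, all edges on the edge-boundary of $A$ are closed. If $A$ surrounds $v$, then $\tau_{\mathbb Z^d\setminus A,p_c}(v,x)=0$. Modifying edges near $v$ changes $\mathcal C(u)$, and possibly the event $\{o\connect{S\:}u\}$, rather than acting on the conditional law.

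\textbf{The missing idea is to avoid first-moment control of the error altogether.}

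The paper uses Proposition~\ref{prop: effective reversed SL} to restrict to pioneers $u$ that are $(K,20)$-regular and $K$-escapable. For every realisation $A$ of $\mathcal C(u)$ with this property, one has deterministically $|A\cap\Lambda_s(u)|\le 20s^4(\log s)^7$ for $s\ge K$. One also has an exit point $\mathbf w$ with $\mathrm d(\mathbf w,A)>K/10$, reachable from $v$ by a path avoiding $A$. Consequently $\mathsf E(\mathbf w,x;A)\lesssim K^{-1/2}\tau_{p_c}(0,x)$, with no information on $\tau_{S,p_c}$ needed. Letting $|x|\to\infty$ then gives $\mathbb E_{p_c}[|\mathcal X^K_S|]\lesssim 1$ (Lemma~\ref{lem:bounded regular escapable pioneers}).

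The conditioning on $\{o\connect{S\:}u\}$ is handled separately, by showing that pioneers are regular and escapable on average (Proposition~\ref{prop: in average pioneers are regular and escapable}).
\begin{itemize}
\item Regularity comes from the local events $\mathcal T^{\mathrm{loc}}_s$. Their failure probability $e^{-c(\log s)^4}$ (Lemma~\ref{lem:Tsloc}) beats the Kozma--Nachmias lower bound $e^{-C(\log s)^2}$ (Lemma~\ref{lem:KN}) for producing a pioneer in each explored box.
\item Escapability comes from a bounded-multiplicity surgery that moves the pioneer to a nearby boundary point, rather than conditioning.
\end{itemize}

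This yields Theorem~\ref{thm:main}. The proposition then follows in two lines, since $|x-u|\ge\varepsilon|x|$ gives $\tau_{p_c}(v,x)\lesssim_\varepsilon\tau_{p_c}(0,x)$.
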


Proposition \ref{prop:reversed SL} may not be entirely surprising. Indeed, many correlation inequalities are expected to admit a reversal (at least at the critical point) in the mean-field regime. A more detailed discussion of this phenomenology can be found in \cite[Section~2]{carpenter2025loops}. 

We now turn to applications of Theorem \ref{thm:pre reversed SL}. Our first result concerns the quantity $\varphi_{p_c}(S)$ defined in \eqref{eq:def phi(S)}.

\subsection{Uniform boundedness of $\varphi_{p_c}(S)$}

Understanding $\tau_{S,p_c}$ for a general set $S$ is extremely challenging. The general belief is that in the mean-field regime one should have $\tau_{S,p_c}(0,x)\asymp \mathbb G_S(0,x)$, where $\mathbb G_S$ is the Green function of the simple random walk killed upon exiting $S$. More precisely, letting $\mathbf P_0$ be the law of the simple random walk on $(\mathbb Z^d,\mathcal E)$ started at $0$, we define for $x\in S$,
\begin{equation}
	\mathbb G_S(0,x)=\sum_{k\geq 0}\mathbf P_0[X_k=x, \: T_S>k],
\end{equation}
where $T_S:=\inf\{k\geq 0: X_k\notin S\}$. By \cite{UchiyamaGreenFunctionEstimates1998} and the discussion below 
\eqref{eq: 2pt full space estimate}, this prediction is verified for 
$S = \mathbb{Z}^d$ in certain high-dimensional settings. Under the assumption 
that $d > 6$ and that \eqref{eq: 2pt full space estimate} holds, it was recently 
proved \cite{panis2025sharp} that an analogous result holds for the half-space 
two-point function: setting $\mathbb{H} = \mathbb{N} \times \mathbb{Z}^{d-1}$, 
one has $\tau_{\mathbb{H}, p_c} \asymp \mathbb{G}_{\mathbb{H}}$ (see also 
\cite{ChatterjeeHanson, ChatterjeeHansonSosoe2023subcritical}). Beyond these two cases, no other complete results are available. Even the case $S=\Lambda_n$ is not fully understood (see however \cite[Theorem~2]{ChatterjeeHanson}). We refer to the discussion below \cite[Lemma~1.1]{KozmaNachmias} for more information. 

 Nevertheless, one can study more averaged versions of these restricted two-point functions. A very natural candidate is the quantity $\varphi_{p_c}(S)$ defined in \eqref{eq:def phi(S)}. This quantity is comparable to the expected number of \emph{pioneers}. Recalling that $\partial S = \{x \in S : \exists\, y \notin S,\, x \sim y\}$, we call a point $u \in \partial S$ a \emph{pioneer} if $0 \connect{S\:} u$. We also define $\mathcal{P}_S := |\{u \in \partial S : 0 \connect{S\:} u\}|$. One can check that, for $p\in (0,1)$,
\begin{equation}\label{eq:comparison pioneers and phi}
	(|\{x: 0\sim x\}|p)^{-1}\varphi_{p}(S)\leq \mathbb E_{p}[\mathcal{P}_S]=\sum_{u\in \partial S}\tau_{S,p}(0,u)\leq p^{-1}\varphi_{p}(S).
\end{equation}
In the case of the simple random walk, it is an easy consequence of Markov's property that
\begin{equation}
	\varphi^{\textup{RW}}(S):=\sum_{\substack{u\in S\\v\notin S\\u\sim v}}\mathbb G_S(0,u)\mathbf P_0[X_1=v-u]=\mathbf P_0[T_S<\infty]\leq 1.
\end{equation}
In particular, the $\varphi^{\textup{RW}}(S)$'s, where $S$ runs over subsets of $\mathbb Z^d$ containing $0$, are uniformly bounded. Our main result extends this fact to high-dimensional Bernoulli percolation.

\begin{Thm}\label{thm:main} Let $d>6$ and assume that \eqref{eq: 2pt full space estimate} holds. There exists $C>0$ such that, for every $S\subset \mathbb Z^d$ containing $0$,
\begin{equation}
	\varphi_{p_c}(S)\leq C.
\end{equation}
\end{Thm}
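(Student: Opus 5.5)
Our plan is to deduce Theorem~\ref{thm:main} from Proposition~\ref{prop:reversed SL} by summing the reversed inequality against a well-chosen weight in $x$. This turns the pointwise lower bound into a bound on $\varphi_{p_c}(S)$. First, by a monotone limit over finite $S_n\uparrow S$, it suffices to treat finite $S$. For infinite $S$, the quantity $\tau_{S_n,p_c}(0,u)$ increases to $\tau_{S,p_c}(0,u)$, and the boundary terms can be handled by Fatou, provided we work with the truncated sums $\sum_{u\in S\cap\Lambda_m}$ and let $m\to\infty$ at the end.

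For finite $S\ni 0$, we take $o=0$ and a large radius $R$, to be chosen much larger than $\mathrm{diam}(S)$, and average \eqref{eq:reversed SL} over $x$ in the annulus $A_R:=\Lambda_{2R}\setminus\Lambda_R$. For $u\in S$ and $x\in A_R$ we have $|x-u|\ge |x|-\mathrm{diam}(S)\ge |x|/2$, so with $\varepsilon=1/2$ every pair $(u,v)$ contributes. Summing \eqref{eq:reversed SL} over $x\in A_R$ gives
\begin{equation*}
\sum_{x\in A_R}\tau_{p_c}(0,x)\;\ge\; c\sum_{\substack{u\in S,\,v\notin S\\u\sim v}}\tau_{S,p_c}(0,u)\,p_c\sum_{x\in A_R}\tau_{p_c}(v,x).
\end{equation*}
By \eqref{eq: 2pt full space estimate}, the left-hand side is $\asymp R^2$. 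For each $v$ within distance $\mathrm{diam}(S)+L\ll R$ of the origin, we also have $\sum_{x\in A_R}\tau_{p_c}(v,x)\gtrsim R^2$ uniformly. Dividing by $R^2$ yields $\varphi_{p_c}(S)\le C$ with $C$ depending only on $d$ and $c(1/2,d)$.

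The main obstacle is the uniformity claim: the constant must not depend on $\mathrm{diam}(S)$. The averaging trick above is only honest because $R$ is allowed to depend on $S$ while the final constants do not. This requires the comparison $\sum_{x\in A_R}\tau_{p_c}(v,x)\asymp R^2$ to hold uniformly in $|v|\le R/10$, which follows directly from the two-sided bound \eqref{eq: 2pt full space estimate}. So the real work is entirely contained in Proposition~\ref{prop:reversed SL}, which we take as given.

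If one wanted to avoid relying on finiteness in that proposition, the first thing to try would be to prove it from Theorem~\ref{thm:pre reversed SL} with $\Lambda=\mathbb Z^d$. The step would be to show
\[
\mathbb E_{p_c}\big[\mathds 1\{0\connect{S\:}u\}\,\tau_{\mathbb Z^d\setminus\mathcal C(u),p_c}(v,x)\big]\gtrsim \tau_{S,p_c}(0,u)\,\tau_{p_c}(v,x),
\]
by bounding the difference $\tau_{p_c}(v,x)-\tau_{\mathbb Z^d\setminus \mathcal C(u)}(v,x)$ via a BK/triangle-diagram estimate that is small in $d>6$ when $|x-u|\ge\varepsilon|x|$. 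Nevertheless, since the proposition is stated earlier, the argument above suffices.
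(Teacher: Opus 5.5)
\textbf{The argument is circular.} Proposition~\ref{prop:reversed SL} is not an independent input. The paper notes that, under \eqref{eq: 2pt full space estimate}, it is equivalent to Theorem~\ref{thm:main}, and in Section~\ref{sec:proof bound phi(S)} it is proved \emph{from} Theorem~\ref{thm:main}: bound the restricted sum by $\varphi_{p_c}(S)\lesssim 1$, then use $\tau_{p_c}(v,x)\lesssim\tau_{p_c}(0,x)$. What you wrote (average over far-away $x$ and apply \eqref{eq: 2pt full space estimate}) is only the easy converse direction. So the whole content of the theorem, a reversal of Simon--Lieb up to a \emph{multiplicative} constant uniformly in $S$, is assumed rather than proved. "Stated earlier" is not "proved earlier." The bookkeeping itself is fine. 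Sending $x$ to infinity is exactly what the paper does at the end of the proof of Lemma~\ref{lem:bounded regular escapable pioneers}, and your monotone approximation of infinite $S$ directly at $p_c$ is valid.

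\textbf{The fallback sketch does not close the gap.} Bounding $\tau_{p_c}(v,x)-\tau_{\mathcal C(u)^c,p_c}(v,x)$ by $\sum_{z\in\mathcal C(u)}\tau_{p_c}(v,z)\tau_{p_c}(z,x)$ gives an error of the \emph{same order} as the main term:
\begin{itemize}
\item since $u\in\mathcal C(u)$ is adjacent to $v$, the term $z=u$ alone is $\gtrsim\tau_{p_c}(v,x)$;
\item the nearby part of the triangle diagram is bounded but carries no small parameter under the sole assumption \eqref{eq: 2pt full space estimate};
\item $v$ may even lie in $\mathcal C(u)$.
\end{itemize}
Moreover, controlling this error in expectation against $\mathds 1\{0\connect{S\:}u\}$ by a multiple of $\tau_{S,p_c}(0,u)$ would need a priori estimates on $\tau_{S,p_c}$. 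These are unavailable for general $S$; this is the additive error-term obstacle discussed in Section~\ref{sec:strategy of proof}.

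\textbf{What is missing} are the ideas of Section~\ref{sec:reg,escap,effective}, in two steps.

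First, restrict to pioneers that are regular ($|\mathcal C(u)\cap\Lambda_s(u)|\le 20s^4(\log s)^7$ for $s\ge K$) and $K$-escapable. Pay a constant via FKG to move from $v$ to a point $\mathbf w$ with $\mathrm d(\mathbf w,\mathcal C(u))>K/10$. The error then becomes $O(K^{-1/2})\,\tau_{p_c}(0,x)$ by \eqref{eq:pf main rep5}, which yields $\mathbb E_{p_c}[|\mathcal X^K_S|]\le C_0$.

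Second, prove separately that pioneers are regular and escapable \emph{on average}: $\mathbb E_{p_c}[|\mathcal X^K_S|]\ge c_0\,\mathbb E_{p_c}[\mathcal P_S]$ (Proposition~\ref{prop: in average pioneers are regular and escapable}). This uses a local surgery, local cluster-size bounds from Aizenman (Lemma~\ref{lem:Tsloc}), the Kozma--Nachmias lower bound (Lemma~\ref{lem:KN}), and a box-exploration argument that compares locally bad pioneers with $\mathbb E_{p_c}[\mathcal P_S]$.

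Only this averaged comparison is ever obtained. The pointwise-in-$u$ inequality you propose is neither proved nor needed, since $\varphi_{p_c}(S)$ is itself a sum over boundary edges.
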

Note that it is not even clear that $\varphi_{p_c}(S) < \infty$ when $S$ is 
infinite. Prior to this work, the only results in this direction were obtained 
for $S =\mathbb H_n:= \mathbb{H} - (n, 0, \ldots, 0)$ in 
\cite{HutchcroftMichtaSladePercolationTorusPlateau2023, DumPan24Perco} (see 
also \cite{panis2025sharp}). Theorem~\ref{thm:main} may come as a 
surprise, as it extends the comparison to the random walk to settings 
where the boundary of $S$ is rough. Let us also mention that the result is 
\emph{optimal}: we expect $\varphi_{p_c}(S)$ to take arbitrarily large values 
in dimensions $2 \leq d \leq 6$. We explain how to derive Theorem~\ref{thm:main} 
from Theorem~\ref{thm:pre reversed SL} in Section~\ref{sec:strategy of proof}.

One of the main difficulties in the analysis of $\varphi_p(S)$ lies in its lack of monotonicity in $S$. Nevertheless, Theorem \ref{thm:main} provides the following result.

\begin{Coro}\label{cor:partial monot} Let $d>6$ and assume that \eqref{eq: 2pt full space estimate} holds. There exists $C>0$ such that, for every $p\leq p_c$, every $S\subset \mathbb Z^d$ containing $0$, and every $\Lambda\supset S$,
\begin{equation}
	\varphi_p(\Lambda)\leq C\cdot \varphi_p(S).
\end{equation}
\end{Coro}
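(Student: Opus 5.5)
The plan is to use the Simon--Lieb inequality (Lemma~\ref{lem:SL}) to decompose connections in $\Lambda$ according to the first exit from $S$, and then to control the resulting factors with Theorem~\ref{thm:main}. Write
\begin{equation*}
\varphi_p(\Lambda)=\sum_{\substack{x\in\Lambda,\,y\notin\Lambda\\ x\sim y}}\tau_{\Lambda,p}(0,x)\,p .
\end{equation*}
For each such boundary edge $\{x,y\}$ of $\Lambda$, apply Lemma~\ref{lem:SL} with $o=0$ to $\tau_{\Lambda,p}(0,x)$. This gives
\begin{equation*}
\tau_{\Lambda,p}(0,x)\le \tau_{S,p}(0,x)+\sum_{\substack{u\in S,\,v\notin S\\u\sim v}}\tau_{S,p}(0,u)\,p\,\tau_{\Lambda,p}(v,x).
\end{equation*}

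We then multiply by $p$ and sum over the boundary edges of $\Lambda$, and treat the two terms separately.

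\textbf{First term.} If $x\in S$ and $x\sim y$ with $y\notin\Lambda$, then $y\notin S$ because $S\subset\Lambda$. Hence the pairs $(x,y)$ that contribute form a subset of the boundary pairs of $S$. The first term therefore contributes at most $\varphi_p(S)$.

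\textbf{Second term.} It equals
\begin{equation*}
\sum_{\substack{u\in S,\,v\notin S\\u\sim v}}\tau_{S,p}(0,u)\,p\cdot \Big(\sum_{\substack{x\in\Lambda,\,y\notin\Lambda\\ x\sim y}}\tau_{\Lambda,p}(v,x)\,p\Big).
\end{equation*}
When $v\in\Lambda$, the bracket is exactly $\varphi_p(\Lambda-v)$ computed from $v$; by translation invariance this is $\varphi_p$ of a set containing $0$. When $v\notin\Lambda$, the term vanishes, since then $\tau_{\Lambda,p}(v,\cdot)=0$, except trivially at $x=v$, which is not in $\Lambda$. The second term is therefore at most $\varphi_p(S)\cdot\sup_{\Lambda'\ni 0}\varphi_p(\Lambda')$.

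\textbf{Uniform bound.} It remains to show $\sup_{\Lambda'\ni0}\varphi_p(\Lambda')\le C$ uniformly in $p\le p_c$. Theorem~\ref{thm:main} gives this at $p=p_c$. For $p\le p_c$ I would use monotonicity in $p$: $\varphi_p(\Lambda')=p\sum\tau_{\Lambda',p}(0,u)$, where each $\tau_{\Lambda',p}$ is increasing in $p$ and so is the prefactor $p$. Hence $\varphi_p(\Lambda')\le\varphi_{p_c}(\Lambda')\le C$. This is immediate because for a fixed set the quantity is a sum of increasing events times $p$, with no lack of monotonicity in $p$.

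Combining the two terms gives
\begin{equation*}
\varphi_p(\Lambda)\le (1+C)\,\varphi_p(S).
\end{equation*}
The only genuinely nontrivial input is Theorem~\ref{thm:main}. The main point to check carefully is the boundary-edge bookkeeping in the first term, namely that every contributing pair $(x,y)$ with $x\in S$, $y\notin\Lambda$ is a boundary edge of $S$, together with the translation step $\varphi(\Lambda-v)$ for $v\in\Lambda$. Neither looks like a real obstacle. If $S$ or $\Lambda$ is infinite, the same argument applies, since Lemma~\ref{lem:SL} and Theorem~\ref{thm:main} are stated for arbitrary sets.
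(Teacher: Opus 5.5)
Your proof is correct and follows the paper's argument essentially verbatim. You apply Lemma~\ref{lem:SL} at each boundary point of $\Lambda$ and sum to obtain $\varphi_p(\Lambda)\le\varphi_p(S)+\sup_{z\in\Lambda}\varphi_p(\Lambda-z)\,\varphi_p(S)$, then bound the supremum via Theorem~\ref{thm:main}; your explicit use of monotonicity, $\varphi_p(\Lambda')\le\varphi_{p_c}(\Lambda')$, fills in a step the paper leaves implicit.
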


While Theorem~\ref{thm:main} is of independent interest, it also plays a key 
role in the following applications of Theorem~\ref{thm:pre reversed SL}.

\subsection{Near-critical bounds}

Various natural length scales can be defined in the subcritical regime. We focus on the so-called \emph{sharp length} which was initially introduced in \cite{hutchcroft2022derivation,PanisTriviality2023}, and more recently studied in various contexts of statistical mechanics \cite{DumPan24Perco,DumPan24WSAW,DuminilPanis2024newLB,vEGPS,vEGPSperco}. The sharp length $L(p)$ is the first scale $k$ at which we can find a set $S\subset \Lambda_k$ containing $0$ for which $\varphi_p(S)<\tfrac{1}{2}$. More precisely,
\begin{equation}
	L(p):=\inf\Big\{k\geq 1: \exists S\subset \Lambda_k, \: 0\in S, \: \varphi_p(S)<\tfrac{1}{2}\Big\}.
\end{equation}
It is not hard to see that $L(p)$ diverges as $p$ approaches $p_c$ (see for instance \cite{PanisTriviality2023}).
Our next result quantifies this divergence.
\begin{Thm}\label{thm:sharplength} Let $d>6$ and assume that \eqref{eq: 2pt full space estimate} holds. There exists $c,C>0$ such that, for every $p\in [p_c/2,p_c)$,
\begin{equation}
	c(p_c-p)^{-1/2}\leq L(p)\leq C(p_c-p)^{-1/2}.
\end{equation}
\end{Thm}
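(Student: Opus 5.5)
Both bounds will be derived from the susceptibility $\chi(p):=\sum_x\tau_p(0,x)$. The plan is to prove $\chi(p)\asymp L(p)^2$ and to combine it with the mean-field bound $\chi(p)\asymp (p_c-p)^{-1}$. The upper bound $\chi(p)\lesssim (p_c-p)^{-1}$ is classical: it follows from the differential inequality $\tfrac{d}{dp}\chi\le C\chi^2$ obtained from Russo's formula and \eqref{eq:BK ineq}. The lower bound $\chi(p)\gtrsim(p_c-p)^{-1}$ uses the triangle condition, which is a consequence of \eqref{eq: 2pt full space estimate} since $d>6$ (Aizenman--Newman). So it suffices to show
\[
L(p)^2\lesssim \chi(p)\lesssim L(p)^2\qquad\text{for } p\in[p_c/2,p_c).
\]

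\emph{Upper bound $\chi(p)\lesssim L(p)^2$.} Take $S\subset\Lambda_{L(p)}$ containing $0$ with $\varphi_p(S)<1/2$. I would iterate Lemma~\ref{lem:SL} with $\Lambda=\mathbb Z^d$ and sum over $x$. This gives
\[
\chi(p)\le \sum_{x\in S}\tau_{S,p}(0,x)+\tfrac12\sup_v\chi(p),
\]
hence $\chi(p)\le 2\sum_{x\in \Lambda_{L(p)}}\tau_{p_c}(0,x)\lesssim L(p)^2$, using $p\le p_c$, monotonicity in $p$, and \eqref{eq: 2pt full space estimate}.

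\emph{Lower bound $\chi(p)\gtrsim L(p)^2$.} This is where Theorem~\ref{thm:pre reversed SL} and Theorem~\ref{thm:main} are needed. Let $k=L(p)-1$. Then every $S\subset\Lambda_k$ containing $0$ has $\varphi_p(S)\ge 1/2$; in particular $\varphi_p(\Lambda_m)\ge1/2$ for all $m\le k$. Apply Theorem~\ref{thm:pre reversed SL} with $S=\Lambda_m$, $\Lambda=\mathbb Z^d$, sum over $x$, and then sum over $m\le k$. The goal is to show that the restricted susceptibility $\sum_x \tau_{\mathbb Z^d\setminus\mathcal C(u),p}(v,x)$ is comparable to $\chi(p)$ on average. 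Via a BK-type estimate, the loss is bounded by
\[
\sum_{x}\sum_{w}\tau_p(u,w)\,\tau_p(v,w)\,\tau_p(w,x),
\]
and the sum over $w$ near $u$ is controlled by the triangle/bubble diagram. Since this diagram is only small in a mean-field sense rather than vanishing, I would instead use the averaged form: sum over $m\in[k/2,k]$ and over $x$ at distance $\gtrsim m$. Theorem~\ref{thm:main} then bounds the total pioneer mass $\varphi_{p}(\Lambda_m)\le C$ (via Corollary~\ref{cor:partial monot} for $p\le p_c$), so the error terms are at most a constant times $\chi$ restricted to large distances.

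The resulting inequality has the shape
\[
\chi(p)\ \ge\ c\sum_{m\le k}\varphi_p(\Lambda_m)\cdot(\text{mean growth per layer})\ \gtrsim\ k^2 .
\]
Heuristically, each shell $\partial\Lambda_m$ is reached with probability $\gtrsim$ const, and the cluster restricted to $\Lambda_{k}$ therefore has expected size $\gtrsim \sum_{m\le k} m\asymp k^2$. The main obstacle is exactly this step: replacing $\tau_{\mathbb Z^d\setminus\mathcal C(u)}$ by $\tau$ off criticality. My first attempt would compare to $\tau_{p_c}$ using $p\ge p_c/2$ and \eqref{eq: 2pt full space estimate}, and control the intersection of $\mathcal C(u)$ with the $v$-to-$x$ path by the open triangle diagram $\sum_{w}\tau_{p_c}(u,w)\tau_{p_c}(w,v)$. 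This diagram is uniformly bounded, and it is small once $w$ is far from $u$ because $d>6$.
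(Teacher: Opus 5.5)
Your reduction to $\chi(p)\asymp L(p)^2$ and your proof of $\chi(p)\lesssim L(p)^2$ are the paper's. You have, however, swapped the two halves of \eqref{eq:susceptibility}. The inequality $\frac{\mathrm d}{\mathrm dp}\chi\le C\chi^2$ integrates to the \emph{lower} bound $\chi(p)\ge (C(p_c-p))^{-1}$, valid in any dimension. It is the upper bound $\chi(p)\lesssim (p_c-p)^{-1}$ that needs the triangle condition. Consequently the hard half of the theorem, $L(p)\le C(p_c-p)^{-1/2}$, rests entirely on $\chi(p)\gtrsim L(p)^2$, and that is where your proposal has a genuine gap.

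First, your scheme cannot produce the factor $L(p)^2$. With $\Lambda=\mathbb Z^d$, the left-hand sides $\sum_x(\tau_p(0,x)-\tau_{\Lambda_m,p}(0,x))$ do not telescope in $m$, so summing over $m\le k$ only controls $(k+1)\chi(p)$. For fixed $m$, once you sum over all $x$ the main term $\sum_x\tau_p(\mathbf w,x)$ is $\chi(p)$ itself. So even perfect error control gives only $\chi(p)-\sum_x\tau_{\Lambda_m,p}(0,x)\ge c\,\varphi_p(\Lambda_m)\chi(p)$, which is self-referential and says nothing about the size of $\chi(p)$. Your ``mean growth per layer'' heuristic assumes exactly what must be proved: that pioneer mass of order one on $\partial\Lambda_m$ yields cluster mass of order $m$ at scale $m$, with no pointwise lower bound on $\tau_p$. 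Also, a shell is not reached with probability of order one, since $\theta_m(p_c)\asymp m^{-2}$; what is of order one is the expected number of pioneers.

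The paper obtains the growth from a layered decomposition. For $x_1=-k$ with $k\le L(p)/2$, it writes $\tau_p(0,x)\ge\sum_{j=k}^{L(p)-1}\big(\tau_{\mathbb H_{j+1},p}(0,x)-\tau_{\mathbb H_j,p}(0,x)\big)$. It then applies Theorem~\ref{thm:pre reversed SL} with $(S,\Lambda)=(\mathbb H_j,\mathbb H_{j+1})$ and sums $x$ over the hyperplane. Finally it uses $\sum_{y:\,y_1=-\ell}\tau_{\mathbb H_\ell,p}(0,y)\gtrsim\varphi_p(\mathbb H_\ell)\ge\tfrac1{2d}\varphi_p(\Lambda_\ell)\ge\tfrac1{4d}$ for $\ell<L(p)$. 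Each pair $k\le j<L(p)$ then contributes order one, which gives $L(p)^2$.

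Second, the error term is not controlled. The diagram $\sum_w\tau_{p_c}(u,w)\tau_{p_c}(w,v)$ with $u\sim v$ is bounded but of order one, and its order-one part comes from $w$ near $u$. Noting that the tail over distant $w$ is small, or averaging over $m$ and $x$, does not remove it.

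The missing idea is to restrict to pioneers in $\mathcal X^K$, i.e.\ regular and $K$-escapable ones. For these, the starting point $\mathbf w$ of the $v$-to-$x$ connection lies at distance $>K/10$ from $\mathcal C(u)$, and $\sum_{z\in\mathcal C(u)}\tau(\mathbf w,z)\lesssim K^{-1/2}$. One also needs Proposition~\ref{prop: in average pioneers are regular and escapable}, which shows that such pioneers carry a constant fraction of the pioneer mass when $S\subset\Lambda_{L(p)}$. Its proof uses a local surgery, Aizenman's cluster bounds, and Lemma~\ref{lem:KN} up to scale $2L(p)$.

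In the hyperplane sum there is a further obstacle. Claim~\ref{claim2}, which is where Theorem~\ref{thm:main} is really used, bounds $\sum_{x:\,x_1=-k}\tau_{\mathbb H_{j+1},p}(z,x)$ only by the depth $1+(j+1+z_1)$ of $z$. With mere regularity the error then becomes $\sum_\ell 2^{\ell(7-d)}\ell^7$, which diverges at $d=7$. The paper fixes this with the super-good condition on $\mathcal C(u)\cap\mathcal V^j_s(y)$, shown to be typical via Theorem~\ref{thm: asselah schapira} in Lemma~\ref{lem: bound mathcal X tilde}. Without these ingredients, $\chi(p)\gtrsim L(p)^2$, and with it the upper bound on $L(p)$, remains unproved.
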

Combining Theorems \ref{thm:pre reversed SL}, \ref{thm:main}, and \ref{thm:sharplength}, we obtain the following near-critical estimate on $\varphi_p(S)$ for a general set $S$. 

\begin{Thm}\label{thm:nc bounds phi(S)} Let $d>6$ and assume that \eqref{eq: 2pt full space estimate} holds. There exist $c,C>0$ such that, for every $p< p_c$ sufficiently close to $p_c$, and every $S\subset \mathbb Z^d$ containing $0$,
\begin{equation}
	c\exp\Big(-C(p_c-p)^{1/2}\mathrm{diam}(S)\Big)\leq\varphi_p(S)\leq C\exp\Big(-c(p_c-p)^{1/2}\mathrm{d}(0,\partial S)\Big),
\end{equation}
with the convention that $\textup{diam}(S)=\infty$ when $S$ is infinite.
\end{Thm}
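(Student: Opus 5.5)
We prove the two bounds separately. Throughout, write $\xi:=L(p)\asymp(p_c-p)^{-1/2}$, using Theorem~\ref{thm:sharplength}.

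\textbf{Upper bound.} By definition of $L(p)$ there is a set $S_0\subset\Lambda_{\xi}$ containing $0$ with $\varphi_p(S_0)<\tfrac12$.

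The first step is to iterate Lemma~\ref{lem:SL}, applied with the pair $(S_0, \mathbb Z^d)$ and translated copies of $S_0$. The classical Hammersley/Duminil-Copin--Tassion argument then gives exponential decay:
\begin{equation}
\tau_p(y,u)\lesssim 2^{-|y-u|/(2\xi)}\quad\text{whenever } |y-u|\geq \xi,
\end{equation}
with some polynomial prefactor that we keep track of. Near $y$ one may simply use $\tau_p\le\tau_{p_c}$.

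The second step handles a general $S$ with $r:=\mathrm d(0,\partial S)$. We bound
\begin{equation}
\varphi_p(S)\le p\sum_{u\in\partial S}\sum_{v\sim u}\tau_{S,p}(0,u).
\end{equation}
The factor $\tau_{S,p}(0,u)$ cannot be bounded crudely by $\tau_p(0,u)$, because summing that over a possibly huge $\partial S$ would diverge. Instead, we run Lemma~\ref{lem:SL} inside $S$: use a first translated copy $0+S_0$ (if $r>\xi$, then $S_0\subset S$), and at each exit vertex $v$ restart with a copy of $S_0$ centred at $v$, provided that copy is still inside $S$. This yields
\begin{equation}
\varphi_p(S)\le \varphi_p(S_0)\cdot\sup_{v}\varphi_p(S\text{ seen from } v).
\end{equation}
As long as the current point is at distance at least $\xi$ from $\partial S$, we can iterate. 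Each step moves the point by at most $\xi$, so after about $r/\xi$ steps we gain a factor $2^{-r/\xi}$. At the final step the current point $w$ is close to $\partial S$, and we finish with the uniform bound $\varphi_p(S-w)\le\varphi_{p_c}(S-w)\le C$ from Theorem~\ref{thm:main}; monotonicity in $p$ holds since $\tau_{S,p}$ is increasing in $p$. This gives $\varphi_p(S)\le C2^{-cr/\xi}$.

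\textbf{Lower bound.} This is where Theorem~\ref{thm:pre reversed SL} enters. Apply it with $\Lambda=\mathbb Z^d$, the origin $o=0$, and $x$ chosen far away; we will sum over $x$ in a box, or equivalently consider susceptibility-type sums. Summing \eqref{eq:reversed SL with C(u)} over $x$ in a large region gives
\begin{equation}
\sum_x\tau_p(0,x)\ge\sum_{u,v}\mathbb E_p\Big[\mathds 1\{0\connect{S\:}u\}\,p\,\chi_{\mathbb Z^d\setminus\mathcal C(u)}(v)\Big].
\end{equation}
Restricting to a window of size $K\xi$ around $S$ makes the left side comparable to $\sum_{|x|\le K\xi}\tau_p(0,x)\approx\xi^2$. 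For the restricted sum on the right, we aim to show
\begin{equation}
\mathbb E\big[\mathds 1\{0\connect{S}u\}\,\chi_{\mathbb Z^d\setminus\mathcal C(u)}(v)\big]\ge c\,\tau_{S,p}(0,u)\,\xi^2 .
\end{equation}
This would give $\varphi_p(S)\lesssim1$, which is not what we want. So the lower bound should use a different route.

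The alternative route is to use Lemma~\ref{lem:SL} in reverse roles. Take $\Lambda=\mathbb Z^d$ and $x$ with $|x|\asymp\xi$, and use the upper bound $\tau_p(v,x)\lesssim\tau_{p_c}(v,x)$. Lemma~\ref{lem:SL} then gives
\begin{equation}
\tau_p(0,x)\le\tau_{S,p}(0,x)+\varphi_p(S)\sup_{v}\tau_p(v,x).
\end{equation}
For $x$ outside $S$, $\tau_{S,p}(0,x)=0$. It remains to lower bound $\tau_p(0,x)$ for $x$ at distance $D=\mathrm{diam}(S)+\xi$. The near-critical lower bound $\tau_p(0,x)\gtrsim |x|^{2-d}e^{-C|x|/\xi}$ follows from the finite-size criterion: since $L(p)\ge c\xi$, every set inside $\Lambda_{c\xi}$ has $\varphi_p\ge 1/2$, and chaining this over $D/\xi$ scales by FKG gives the claimed lower bound. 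For $v$ adjacent to $S$ and the chosen $x$, $\sup_v\tau_p(v,x)\lesssim\xi^{2-d}$. Dividing, we get
\begin{equation}
\varphi_p(S)\gtrsim e^{-CD/\xi}\ge c\,e^{-C'\,\mathrm{diam}(S)/\xi},
\end{equation}
where the last inequality absorbs the extra $\xi$ in $D$ into constants.

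\textbf{Main obstacle.} The main difficulty is the near-critical lower bound $\tau_p(0,x)\gtrsim\xi^{2-d}e^{-C|x|/\xi}$ for $|x|\ge\xi$. To prove it, I would apply Theorem~\ref{thm:pre reversed SL} at scale $\xi$, where $\varphi_p\ge1/2$ on every set, to push connections across one $\xi$-block with probability bounded below. I would then use FKG to chain $|x|/\xi$ blocks, which loses only a constant factor per block.
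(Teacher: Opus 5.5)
Your upper bound is the paper's argument: iterate Lemma~\ref{lem:SL} inside $S$ with translates of $S_0$, then close with Theorem~\ref{thm:main}. The preliminary pointwise-decay step is unnecessary. Your reduction for the lower bound is also correct. For $x\notin S$ with $\mathrm{d}(x,S)\geq \xi$ and $|x|\lesssim \mathrm{diam}(S)+\xi$, Lemma~\ref{lem:SL} gives $\varphi_p(S)\geq \tau_p(0,x)/\max_{v}\tau_p(v,x)\gtrsim \xi^{d-2}\tau_p(0,x)$. But this moves the whole difficulty onto the pointwise bound $\tau_p(0,x)\gtrsim |x|^{2-d}e^{-C|x|/\xi}$, i.e.\ the lower bound of Theorem~\ref{thm:bounds 2pt}. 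In the paper this is the hardest estimate, and it is derived \emph{from} the averaged bound of Proposition~\ref{prop: nc lb averaged} plus further work. Your justification of it does not hold up.

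The step that fails is ``chain $|x|/\xi$ blocks by FKG, losing only a constant factor per block''.

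\begin{itemize}
\item The input $\varphi_p(S')\geq\frac12$ for $S'\subset\Lambda_{L(p)}$ is only an averaged statement. The expected number of pioneers is of order one, spread over $\asymp\xi^{d-1}$ boundary points. It gives no fixed point or small target that is reached with probability bounded below.
\item FKG only multiplies bounds for fixed targets. Point-to-point at scale $\xi$ costs $\asymp\xi^{2-d}$ per block, giving $\exp(-C|x|\log\xi/\xi)$.
\item For facet sums, FKG only yields $\sum_{z}\tau(0,z)\geq |F|^{-1}\sum_{y\in F}\tau(0,y)\cdot\min_{y}\sum_z\tau(y,z)$, which loses $|F|\asymp\xi^{d-1}$ per block.
\end{itemize}

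Multiplying averaged quantities with constant loss is exactly a reversed Simon--Lieb step, and invoking Theorem~\ref{thm:pre reversed SL} does not by itself provide it. You must show that the restriction to $\mathcal C(u;Q)^c$ is harmless, i.e.\ that pioneers of the already-built chain $Q'$ on the shared facet are, on average, regular and escapable in $Q$. Since $Q'$ has diameter $\asymp N\xi$, Lemma~\ref{lem: pioneers are in average regular} is unavailable, because Lemma~\ref{lem:KN} only holds up to scale $2L(p)$.

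The paper's proof of Proposition~\ref{prop: nc lb averaged} handles the large scales using:
\begin{itemize}
\item the events $\widetilde{\mathcal T}^{\mathrm{loc}}_{s,Q}$;
\item Theorem~\ref{thm: asselah schapira};
\item the upper bound of the present theorem;
\item an induction comparing $\tau_{Q',p}(0,y)$ with $\mathbb P_p[0\connect{Q'\:}\partial\Lambda_t(y)]$.
\end{itemize}
With that averaged bound in hand, the paper concludes in a few lines via Corollary~\ref{cor:partial monot} and $\varphi_p(\Lambda_{(2N-1)k})\gtrsim\sum_{y\in F}\tau_{Q,p}(0,y)\geq\alpha^N$. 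It never needs a pointwise estimate, which would require yet another reversed Simon--Lieb argument near $x$.
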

Theorem \ref{thm:nc bounds phi(S)} is mostly useful in settings where $\textup{diam}(S)\asymp \mathrm{d}(0,\partial S)$, as is the case for 
instance when $S = \Lambda_n$, $n \geq 1$. Let us mention that this result also implies a lower bound on $\varphi_p(\mathbb H_n)$: indeed, by symmetry, one has $\varphi_p(\mathbb H_n)\geq \tfrac{1}{2d} \cdot\varphi_p(\Lambda_n)$.

Building on Theorem \ref{thm:nc bounds phi(S)}, we also obtain the following near-critical bounds on the two-point function in the full-space.

\begin{Thm}\label{thm:bounds 2pt} Let $d>6$ and assume that \eqref{eq: 2pt full space estimate} holds. There exist $c,C>0$ such that, for every $p< p_c$ sufficiently close to $p_c$, and every $x\in \mathbb Z^d$,
\begin{equation}
	\frac{c}{(1\vee |x|)^{d-2}}\exp\Big(-C(p_c-p)^{1/2}|x|\Big)\leq \tau_p(0,x)\leq \frac{C}{(1\vee |x|)^{d-2}}\exp\Big(-c(p_c-p)^{1/2}|x|\Big).
\end{equation}
\end{Thm}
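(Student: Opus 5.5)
We prove the two bounds of Theorem~\ref{thm:bounds 2pt} separately. Both come from Theorem~\ref{thm:nc bounds phi(S)} applied to boxes, combined with the Simon--Lieb inequality (Lemma~\ref{lem:SL}) and its partial reversal (Theorem~\ref{thm:pre reversed SL}, or Proposition~\ref{prop:reversed SL}). Write $\xi:=(p_c-p)^{-1/2}$ throughout.

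\textbf{Upper bound.} For $|x|\le \xi$, monotonicity in $p$ and \eqref{eq: 2pt full space estimate} give $\tau_p(0,x)\le\tau_{p_c}(0,x)\lesssim |x|^{-(d-2)}$, so only $|x|\ge \xi$ needs work.

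1. Set $n=\lfloor |x|/2\rfloor$ and apply Lemma~\ref{lem:SL} with $S=\Lambda_n$ and $\Lambda=\mathbb Z^d$. Since $x\notin \Lambda_n$, the first term vanishes and
\begin{equation*}
\tau_p(0,x)\le \sum_{u\in \Lambda_n,\,v\notin\Lambda_n,\,u\sim v}\tau_{\Lambda_n,p}(0,u)\,p\,\tau_p(v,x).
\end{equation*}

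2. In this sum $|v-x|\gtrsim |x|$, so the critical estimate gives $\tau_p(v,x)\lesssim |x|^{-(d-2)}$.

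3. The remaining sum is $\varphi_p(\Lambda_n)$, and Theorem~\ref{thm:nc bounds phi(S)} bounds it by $Ce^{-cn/\xi}$. This yields the upper bound.

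\textbf{Lower bound.} For $|x|\le \xi$ we use a finite-energy/FKG argument: comparing $\tau_p$ to $\tau_{p_c}$ at scale $\xi$ costs only a constant, for instance via Russo's formula together with the triangle-type estimates available at $d>6$. If that is awkward, we instead run the iteration below starting from scale $1$. For $|x|\ge\xi$ we chain boxes of size $\xi$ and glue with FKG.

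1. First we want a one-step lower bound of the form
\begin{equation*}
\tau_p(0,y)\gtrsim \xi^{-(d-2)} \quad \text{for } |y|\asymp \xi.
\end{equation*}
To get it, apply Theorem~\ref{thm:pre reversed SL} with $S=\Lambda_{\xi}$ and $\Lambda=\mathbb Z^d$. By Theorem~\ref{thm:nc bounds phi(S)}, $\varphi_p(\Lambda_\xi)\ge c$. We then need to control $\tau_{\mathbb Z^d\setminus\mathcal C(u),p}(v,x)$ from below on average over pioneers $u$. Proposition~\ref{prop:reversed SL} does this at $p_c$; at $p<p_c$ we would repeat its proof. The mechanism is that, in $d>6$, the cluster of $u$ is thin near $u$, so removing it costs only a constant factor at distances of order $\xi$.

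2. Next, place points $y_0=0,y_1,\dots,y_k=x$ with $|y_{i+1}-y_i|\asymp \xi$ and $k\asymp |x|/\xi$. FKG gives
\begin{equation*}
\tau_p(0,x)\ge \prod_i \tau_p(y_i,y_{i+1})\ge (c\xi^{2-d})^{k}.
\end{equation*}
This is of order $e^{-C|x|\log\xi/\xi}$, which loses a $\log\xi$ in the exponent. To avoid this loss, we iterate the reversed inequality itself: bound $\tau_p(0,x)$ from below by $c\,\varphi_p(\Lambda_\xi)$ times an average of $\tau_p(v,x)$ over pioneers $v$, with a restriction to $\mathbb Z^d\setminus \mathcal C$. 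Each step then moves distance $\xi$ closer to $x$ at a constant-factor cost, which gives $e^{-C|x|/\xi}$. The final step, when $|v-x|\lesssim\xi$, supplies the polynomial factor through the critical-scale estimate.

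\textbf{Main obstacle.} The hardest part is the lower bound. The iteration must keep the restriction to complements of previously explored clusters under control, and pioneers must be guaranteed to lie on the side of the box facing $x$. We would try to handle the latter by replacing the box at each step with a half-space-like slab $S$ oriented towards $x$, using the inequality $\varphi_p(\mathbb H_n)\gtrsim \varphi_p(\Lambda_n)$ noted after Theorem~\ref{thm:nc bounds phi(S)}.
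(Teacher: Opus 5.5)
Your upper bound is correct and is essentially the paper's argument. The paper iterates Lemma~\ref{lem:SL} about $|x|/L(p)$ times with a fixed $S_0\subset\Lambda_{L(p)}$ satisfying $\varphi_p(S_0)<\tfrac12$. You apply it once and quote the upper bound of Theorem~\ref{thm:nc bounds phi(S)} for $\Lambda_n$, which is proved by that same iteration.

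The lower bound, however, has a genuine gap. You name the obstacles but give no mechanism for overcoming them, and the steps you treat as routine are the hard ones.

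\textbf{The small-$|x|$ regime.} Russo's formula with BK and \eqref{eq: 2pt full space estimate} gives only $\tau_{p_c}(0,x)-\tau_p(0,x)\lesssim (p_c-p)|x|^2\tau_{p_c}(0,x)$. This yields a constant-factor comparison for $|x|\le\kappa\xi$ with $\kappa$ small (Lemma~\ref{lem:nearcritic2pt}), not up to $|x|\le\xi$. Since FKG gluing of pointwise bounds loses a polynomial factor, your one-step bound $\tau_p(0,y)\gtrsim\xi^{2-d}$ for $|y|\asymp\xi$ is already a core difficulty, not a warm-up.

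\textbf{Step 1 is circular.} You propose to repeat the proof of Proposition~\ref{prop:reversed SL} at $p<p_c$. That proposition follows from Theorem~\ref{thm:main} together with the pointwise comparison $\tau_{p_c}(v,x)\lesssim\tau_{p_c}(0,x)$. At $p<p_c$ the analogous comparison is precisely the pointwise lower bound you are trying to prove.

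\textbf{The iteration has the same problem.} Each time you re-apply the reversed inequality inside $\mathbb Z^d\setminus\mathcal C$, removing the restriction via Proposition~\ref{prop: effective reversed SL} requires $\mathsf E(\mathbf w,x;\mathcal C(u))\ll\tau_p(\mathbf w,x)$ with $|\mathbf w-x|\gg\xi$. That again needs a pointwise lower bound on $\tau_p$.

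\textbf{Pioneers are not localized.} Knowing $\varphi_p(\Lambda_\xi)\ge c$ says nothing about where on $\partial\Lambda_\xi$ the pioneers sit. The slab idea does not fix this: half-spaces are infinite and do not confine pioneers laterally. Moreover, Proposition~\ref{prop: in average pioneers are regular and escapable} is only available for finite $S\subset\Lambda_{L(p)}$.

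The missing idea is to avoid working pointwise until the very end. The paper proves Proposition~\ref{prop: nc lb averaged} by induction on the averaged quantity $\sum_{y\in F}\tau_{Q,p}(0,y)$ along a tube $Q$ of boxes of side $\lfloor\varepsilon L(p)\rfloor$:
\begin{itemize}
\item Theorem~\ref{thm:pre reversed SL} with $S=Q'$, $\Lambda=Q$ forces the relevant pioneers onto the shared facet $F'$.
\item After summing over $y\in F$, the error term is controlled by $\sup_n\varphi_{p_c}(\mathbb H_n)\lesssim 1$ (Theorem~\ref{thm:main}) and the main term by Lemma~\ref{lem:facet}, so no pointwise input is needed.
\item Restrictions never accumulate, because the induction hypothesis concerns the unrestricted $\tau_{Q',p}$.
\item Regularity of pioneers in $Q$ at large scales, where Lemma~\ref{lem:KN} is unavailable, comes from the upper bound of Theorem~\ref{thm:nc bounds phi(S)}, Theorem~\ref{thm: asselah schapira}, and the induction hypothesis itself.
\end{itemize}
Only then is a pointwise bound extracted. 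When $|x|\ge L(p)(\log L(p))^2$, this is done by gluing with Lemma~\ref{lem:KN}, whose $e^{-C(\log L(p))^2}$ cost is absorbed. Otherwise, one performs a further reversed Simon--Lieb step on the facet nearest $x$. This step uses Lemma~\ref{lem:nearcritic2pt} to bound $\tau_p(\mathbf w,x)$, together with an extra regularity condition on $\mathcal C(u)$ near $x$ (the set $\mathcal X^K_Q(x)$) to control the error term.
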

As mentioned above, there are other natural length-scales to consider in the subcritical regime. The most commonly studied one is the so-called \emph{correlation length}, which measures the inverse of the exponential rate of decay of the two-point function. It is defined, for $p<p_c$ and $u\in \mathbb S^{d-1}$, by
\begin{equation}
	\xi(\mathbf{u},p):=\left(-\frac{1}{n}\lim_{n\rightarrow \infty}\log \tau_p(0,\lfloor n\mathbf{u}\rfloor )\right)^{-1}.
\end{equation}
Another natural distance is given by the \emph{correlation length of order $\phi$} where $\phi>0$: for $p<p_c$,
\begin{equation}
	\xi_\phi(p)^\phi:=\frac{1}{\chi(p)}\sum_{x\in \mathbb Z^d}|x|^\phi\tau_p(0,x),
\end{equation}
where $\chi(p):=\mathbb E_p[|\mathcal C(0)|]=\sum_{x\in \mathbb Z^d}\tau_p(0,x)$ is the expected size of the cluster of the origin $\mathcal C(0)$ (also called the \emph{susceptibility}). 
Theorem \ref{thm:bounds 2pt} immediately implies the following result.

\begin{Coro}\label{cor:correlation length} Let $d>6$ and assume that \eqref{eq: 2pt full space estimate} holds. There exist $c,C>0$ such that, for every $p< p_c$ sufficiently close to $p_c$, for every $\mathbf{u}\in \mathbb S^{d-1}$,
\begin{equation}
	c (p_c-p)^{-1/2}\leq \xi(\mathbf{u},p)\leq C (p_c-p)^{-1/2},
\end{equation}
and for every $\phi>0$,
\begin{equation}
		c (p_c-p)^{-1/2}\leq \xi_{\phi}(p)\leq C (p_c-p)^{-1/2}.
\end{equation}
\end{Coro}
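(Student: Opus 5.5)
We derive Corollary~\ref{cor:correlation length} directly from the two-sided estimate of Theorem~\ref{thm:bounds 2pt}; no further percolation input is needed. Write $m:=(p_c-p)^{1/2}$, so the claims become $\xi(\mathbf u,p)\asymp m^{-1}$ and $\xi_\phi(p)\asymp m^{-1}$. Throughout, $p<p_c$ is close enough to $p_c$ for Theorem~\ref{thm:bounds 2pt} to apply, and in particular $m$ is small.

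\emph{Correlation length.} Fix $\mathbf u\in\mathbb S^{d-1}$ and set $x_n:=\lfloor n\mathbf u\rfloor$. Then $|x_n|=n|\mathbf u|+O(1)$ with $d^{-1/2}\le|\mathbf u|\le 1$. Take $-\frac1n\log$ in the two bounds of Theorem~\ref{thm:bounds 2pt}. The polynomial prefactor $(1\vee|x_n|)^{-(d-2)}$ and the constants $c,C$ contribute $O(\log n/n)\to0$. The limit defining $\xi$ exists by subadditivity. We therefore get
\[
c\,m\,|\mathbf u|\le \xi(\mathbf u,p)^{-1}\le C\,m\,|\mathbf u|.
\]
Since $|\mathbf u|\asymp 1$ uniformly in $\mathbf u$, this gives the first statement.

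\emph{Correlation length of order $\phi$.} Substitute the bounds of Theorem~\ref{thm:bounds 2pt} into both $\chi(p)$ and $\sum_x|x|^\phi\tau_p(0,x)$, and compare each sum with an integral in radial coordinates using $|\{x:|x|=r\}|\asymp r^{d-1}$.

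For the upper bounds, for any $a\ge0$ we have
\[
\sum_x |x|^{a}\,(1\vee|x|)^{2-d}\,e^{-cm|x|}\lesssim \sum_{r\ge0}(1\vee r)^{a+1}e^{-cmr}\lesssim_a m^{-(a+2)}.
\]
For the lower bounds, restrict the sum to $m^{-1}\le|x|\le 2m^{-1}$. There the exponential factor is at least $e^{-2C}$, so
\[
\sum_x |x|^{a}\,\tau_p(0,x)\gtrsim m^{-a}\cdot m^{-(d-2)\cdot(-1)}\cdot m^{-d}\cdot m^{d-2}\asymp m^{-(a+2)} .
\]
Applying these with $a=0$ gives $\chi(p)\asymp m^{-2}$. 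Applying them with $a=\phi$ gives $\sum_x|x|^\phi\tau_p(0,x)\asymp_\phi m^{-(\phi+2)}$. Hence $\xi_\phi(p)^\phi\asymp m^{-\phi}$, and taking $\phi$-th roots gives the claim, with constants depending on $\phi$.

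The only step needing care is the lower bound for $\xi_\phi$. Restricting to the annulus $|x|\asymp m^{-1}$ makes both the numerator and $\chi$ comparable to their upper bounds. This is what yields the matching constant and keeps the ratio from being dominated by small $|x|$.
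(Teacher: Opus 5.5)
Your proof is correct and is the argument the paper has in mind when it says the corollary follows immediately from Theorem~\ref{thm:bounds 2pt}: take $-\frac1n\log$ along $\lfloor n\mathbf u\rfloor$ using $d^{-1/2}\le|\mathbf u|\le 1$, and bound $\chi(p)$ and $\sum_x|x|^\phi\tau_p(0,x)$ by radial sums, with the lower bounds coming from the annulus $|x|\asymp(p_c-p)^{-1/2}$. You are also right that the constants for $\xi_\phi$ must depend on $\phi$, since for fixed $p$ one has $\xi_\phi(p)\to\infty$ as $\phi\to\infty$.

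There is one arithmetic slip: in your annulus display the factor $m^{d-2}$ appears twice, so the product as written equals $m^{-a+d-4}$. It should read $m^{-a}\cdot m^{d-2}\cdot m^{-d}\asymp m^{-(a+2)}$, which is the value you go on to use.
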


Except for the lower bounds of Theorems \ref{thm:nc bounds phi(S)} and \ref{thm:bounds 2pt}, the above results were already known. The main novelty is in their proofs, which are very different from previous approaches and also much shorter. In particular, we only rely on classical results of Aizenman--Newman \cite{AizenmanNewmanTreeGraphInequalities1984} and Aizenman \cite{AizenmanNumberIncipient1997} to derive them.

Although not explicitly stated there, Theorem \ref{thm:sharplength} follows from the near-critical analysis of \cite{HutchcroftMichtaSladePercolationTorusPlateau2023}. It was also derived in \cite{DumPan24Perco} in the context of spread-out percolation. A weaker version of Theorem \ref{thm:nc bounds phi(S)} (restricted to $S=\mathbb H_n$ for $n\geq 0$), as well as the upper bound of Theorem \ref{thm:bounds 2pt} are also proved in \cite{HutchcroftMichtaSladePercolationTorusPlateau2023}. However, \cite{HutchcroftMichtaSladePercolationTorusPlateau2023} relies crucially on the one-arm computation of \cite{KozmaNachmias} and on the estimate on the half-space two-point function of \cite{ChatterjeeHanson} to conclude. Theorems \ref{thm:pre reversed SL} and \ref{thm:main} allow us to circumvent the use of these results. Corollary \ref{cor:correlation length} was obtained for $\mathbf{u}=(1,0,\ldots,0)$ or $\phi=2$ in \cite{hara1990mean} using the lace expansion.
Finally, let us mention that much more precise versions of Theorem \ref{thm:bounds 2pt} and Corollary \ref{cor:correlation length} were recently derived for nearest-neighbour percolation in dimensions $d\geq 15$ \cite{LiuSla26}.

\subsection{The one-arm exponent}

We now turn to our last application, beginning with some definitions. If $n\geq 1$ and $p\in [0,1]$, we define the \emph{one-arm probability} by
\begin{equation}
	\theta_n(p):=\mathbb P_p[0\connect{}\partial\Lambda_n].
\end{equation}
It is expected that $\theta_n(p_c)$ decays algebraically in $n$. The exponent governing the decay is called the \emph{one-arm exponent}. In \cite{duminil2017new}, the authors proved that
\begin{equation}\label{eq:derivative of theta}
	\frac{\mathrm{d}}{\mathrm{d}p}\theta_n(p)=\frac{1}{p(1-p)}\mathbb E_p[\varphi_p(\mathcal S_n)\mathds{1}_{0\in \mathcal S_n}],
\end{equation}
where $\mathcal S_n:=\{x\in \Lambda_n: x\textup{ is not connected to } \partial\Lambda_n \}$. Combining \eqref{eq:derivative of theta} with Theorem \ref{thm:main} yields the following result.
\begin{Coro}\label{coro:bounded derivative} Let $d>6$ and assume that \eqref{eq: 2pt full space estimate} holds. There exists $C>0$ such that, for every $n\geq 1$ and every $p\in [p_c/2,p_c]$,
\begin{equation}\label{eq:derivative bounded}
	\frac{\mathrm{d}}{\mathrm{d}p}\theta_n(p)\leq C.
\end{equation}
\end{Coro}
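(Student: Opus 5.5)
The plan is to start from the derivative formula \eqref{eq:derivative of theta} of \cite{duminil2017new}:
\begin{equation*}
	\frac{\mathrm{d}}{\mathrm{d}p}\theta_n(p)=\frac{1}{p(1-p)}\mathbb E_p[\varphi_p(\mathcal S_n)\mathds{1}_{0\in \mathcal S_n}].
\end{equation*}
For $p\in[p_c/2,p_c]$, the prefactor $\frac{1}{p(1-p)}$ is bounded by a constant depending only on $p_c$, since $p_c\in(0,1)$. Hence it suffices to bound $\varphi_p(S)$ uniformly over all (finite) $S\subset\Lambda_n$ containing $0$ and over all $p\in[p_c/2,p_c]$. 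Once we have $\varphi_p(S)\le C'$ for every such $S$ and $p$, the expectation is at most $C'\,\mathbb P_p[0\in\mathcal S_n]\le C'$, and the claim follows with $C=C'\cdot\sup_{p\in[p_c/2,p_c]}\frac{1}{p(1-p)}$.

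For the uniform bound on $\varphi_p(S)$, I use monotonicity in $p$ together with Theorem \ref{thm:main}. The event $\{0\connect{S\:}u\}$ is increasing, so $\tau_{S,p}(0,u)\le\tau_{S,p_c}(0,u)$ for $p\le p_c$. Also $p\le p_c$, so each summand of $\varphi_p(S)$ is bounded by the corresponding summand of $\varphi_{p_c}(S)$. Therefore $\varphi_p(S)\le\varphi_{p_c}(S)\le C$ by Theorem \ref{thm:main}, which applies to every $S\ni0$. In particular it applies to the random set $\mathcal S_n$ on the event $\{0\in\mathcal S_n\}$; this is a legitimate use because the bound is deterministic and uniform in $S$.

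The only point needing care is the validity of \eqref{eq:derivative of theta} up to and including $p=p_c$. Since $\theta_n$ is a polynomial in $p$ (it depends on the finitely many edges meeting $\Lambda_n$), it is differentiable everywhere, and the formula from \cite{duminil2017new} holds for all $p\in(0,1)$. Thus no obstacle arises; the substance of the result is entirely contained in Theorem \ref{thm:main}.
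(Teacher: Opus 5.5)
Your proposal is correct and follows the same route as the paper. The paper obtains the corollary by inserting the uniform bound of Theorem~\ref{thm:main} into the Duminil-Copin--Tassion formula \eqref{eq:derivative of theta}, with the prefactor $1/(p(1-p))$ bounded on $[p_c/2,p_c]$. The paper states this in a single line, and your steps make explicit what it leaves implicit: $\varphi_p(S)\le\varphi_{p_c}(S)$ for $p\le p_c$ by monotonicity, and the formula holds up to $p=p_c$ because $\theta_n$ is a polynomial in $p$.
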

Combined with Theorem \ref{thm:nc bounds phi(S)}, Corollary \ref{coro:bounded derivative} yields a (very) short route to the computation of the one-arm exponent, thereby reproving a celebrated result of Kozma and Nachmias \cite{KozmaNachmias}.

\begin{Thm}\label{thm:onearm} Let $d>6$ and assume that \eqref{eq: 2pt full space estimate} holds. There exist $c,C>0$ such that, for every $n\geq 1$,
\begin{equation}\label{eq:one-arm proba}
	\frac{c}{n^2}\leq \theta_n(p_c)\leq \frac{C}{n^2}.
\end{equation}\end{Thm}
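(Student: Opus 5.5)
The plan is to derive both bounds of \eqref{eq:one-arm proba} from the behaviour of $\theta_n(p)$ as a function of $p$, using Corollary \ref{coro:bounded derivative} for the upper bound and Theorem \ref{thm:nc bounds phi(S)} for the lower bound. The one external input is the classical Aizenman--Newman bound $\chi(p)\lesssim (p_c-p)^{-1}$, which holds under \eqref{eq: 2pt full space estimate} via the triangle condition.

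\textbf{Upper bound.} Fix $n$ and set $p=p_c-\delta$ with $\delta=\lambda n^{-2}$, for a constant $\lambda$ chosen below. By Corollary \ref{coro:bounded derivative} and the mean value theorem,
\[
\theta_n(p_c)\le \theta_n(p)+C\delta .
\]
It then remains to show $\theta_n(p)\lesssim n^{-2}$ for this $p$. Take $S=\Lambda_{n/2}$. By Theorem \ref{thm:nc bounds phi(S)}, since $\mathrm d(0,\partial S)\asymp n$,
\[
\varphi_p(\Lambda_{n/2})\le C e^{-c\sqrt{\lambda}}.
\]
A connection from $0$ to $\partial\Lambda_n$ must first exit $\Lambda_{n/2}$ through some edge $uv$. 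Applying BK in the same way as in the proof of Lemma \ref{lem:SL} gives
\[
\theta_n(p)\le \varphi_p(\Lambda_{n/2})\max_v \mathbb P_p[v\connect{}\partial\Lambda_{n/2}(v)] .
\]
This bound is only useful if iterated, which seems wasteful. A simpler route is to split by whether the cluster of the origin is large:
\[
\theta_n(p)\le \mathbb P_p[|\mathcal C(0)|\ge n^4]+\mathbb P_p\big[0\connect{}\partial\Lambda_n,\ |\mathcal C(0)|<n^4\big].
\]
This split is not obviously good either. Instead, I would use Markov's inequality on the number of pioneers, which is the cleanest option. On $\{0\connect{}\partial\Lambda_n\}$, every box boundary $\partial\Lambda_k$ with $n/2\le k\le n$ contains a pioneer of $\Lambda_k$. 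Summing over $k$ and using \eqref{eq:comparison pioneers and phi} gives
\[
\tfrac n2\,\theta_n(p)\le \sum_{k=n/2}^{n}\mathbb E_p[\mathcal P_{\Lambda_k}]\lesssim n\,e^{-c\sqrt\lambda}.
\]
This only yields $\theta_n(p)\lesssim e^{-c\sqrt\lambda}$, which is not $n^{-2}$. To obtain $n^{-2}$ I need extra information. Following Kozma--Nachmias, I would combine this with the fact that $\sum_{x\in\partial\Lambda_k}\tau_p(0,x)$ is at most $\mathbb E_p[\mathcal P_{\Lambda_k}]$, together with $\chi(p)\lesssim \lambda^{-1}n^2$ and a second-moment argument. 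On $\{0\connect{}\partial\Lambda_n\}$ the cluster should typically have size of order $n^4$. So
\[
\theta_n(p)\lesssim \frac{\mathbb E_p[|\mathcal C(0)|\mathds 1\{0\connect{}\partial\Lambda_n\}]}{n^4}\le \frac{\chi(p)}{n^4}\lesssim \frac{1}{\lambda n^2},
\]
provided I can show the conditional lower bound $|\mathcal C(0)|\gtrsim n^4$ with positive probability. \textbf{This is the step I expect to be the main obstacle.} I would attempt it via a conditional second-moment argument using tree-graph bounds.

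\textbf{Lower bound.} Take $p=p_c-\lambda n^{-2}$ with $\lambda$ small. By Theorem \ref{thm:nc bounds phi(S)},
\[
\varphi_p(\Lambda_n)\ge c\,e^{-C\sqrt\lambda}\ge c/2 .
\]
Then integrate \eqref{eq:derivative of theta} from $p$ to $p_c$. The difficulty is that \eqref{eq:derivative of theta} involves the random set $\mathcal S_n$, not $\Lambda_n$. I would try to lower bound $\varphi_p(\mathcal S_n)$ on the event $\{0\in\mathcal S_n\}$ by restricting to the configurations where $\mathcal S_n$ essentially contains $\Lambda_{n/2}$, and then applying Corollary \ref{cor:partial monot}. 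If that works, it gives
\[
\theta_n(p_c)\ge \int_p^{p_c}c'\,\mathrm dq\gtrsim \lambda n^{-2}.
\]
Justifying this lower bound on $\varphi_p(\mathcal S_n)$ is a second delicate point.
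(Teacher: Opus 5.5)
Your upper bound has a genuine gap. It rests on the claim that, conditionally on $\{0\connect{}\partial\Lambda_n\}$, the cluster satisfies $|\mathcal C(0)|\gtrsim n^4$ with probability bounded below. That is not a routine second-moment step. It is essentially the core of the Kozma--Nachmias argument, and their regular-point machinery exists to prove it. Tree-graph bounds only give upper bounds on moments, whereas what you need is a lower bound on the volume under a rare conditioning. As written, the upper bound is therefore not proved.

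The missing idea is that the BK bound you set aside as ``wasteful'' already closes the argument. The point is to use it as a recursion in the scale at $p_c$, not to iterate it at fixed $p$. With $B=\Lambda_{\lfloor n/2\rfloor-L}$, BK and monotonicity in $p$ give $\theta_n(p)\le\varphi_p(B)\,\theta_{\lfloor n/2\rfloor}(p_c)$. Theorem~\ref{thm:nc bounds phi(S)} at $p=p_c-\lambda n^{-2}$ gives $\varphi_p(B)\le Ce^{-c\sqrt\lambda}\le\frac1{10}$ once $\lambda$ is large; this is Lemma~\ref{lem:theta drops}. Combining with your first step,
\[
\theta_n(p_c)\le\theta_n(p_c-\lambda n^{-2})+\frac{C\lambda}{n^2}\le\frac{1}{10}\,\theta_{\lfloor n/2\rfloor}(p_c)+\frac{C\lambda}{n^2}.
\]
Since $\frac{1}{10}\cdot 4<1$, induction over dyadic scales gives $\theta_{2^k}(p_c)\le C_2\,4^{-k}$. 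No volume estimate is needed: the derivative bound pays an additive cost $\lambda n^{-2}$, which is exactly the target order, and BK supplies the contraction.

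Your lower-bound strategy matches the paper's: integrate \eqref{eq:derivative of theta} over a window of width $\asymp n^{-2}$ below $p_c$. However, the justification you sketch points the wrong way. If $\mathcal S_n\supset\Lambda_{n/2}$, Corollary~\ref{cor:partial monot} gives $\varphi_p(\mathcal S_n)\le C\varphi_p(\Lambda_{n/2})$, which is an upper bound. No restriction on configurations is needed, because $\mathcal S_n\subset\Lambda_n$ always. Applying Corollary~\ref{cor:partial monot} with $(S,\Lambda)=(\mathcal S_n,\Lambda_n)$ gives $\varphi_p(\mathcal S_n)\ge C^{-1}\varphi_p(\Lambda_n)$ deterministically on $\{0\in\mathcal S_n\}$.

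The paper is even more direct. By Theorem~\ref{thm:sharplength}, $L(p)>n$ for $p_c-c_1^2n^{-2}<p<p_c$. By the definition of $L(p)$, every $S\subset\Lambda_n$ containing $0$ then has $\varphi_p(S)\ge\frac12$.

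Also, you dropped the factor $\mathbb P_p[0\in\mathcal S_n]=1-\theta_n(p)$. Keeping it gives $\frac{\mathrm d}{\mathrm dp}\theta_n(p)\ge\frac{1-\theta_n(p)}{2p(1-p)}$. Integrating yields $-\log(1-\theta_n(p_c))\gtrsim n^{-2}$, which gives the claim since $\theta_n(p_c)\to0$.
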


The argument of Kozma and Nachmias \cite{KozmaNachmias} relies on the analysis of \emph{regular points}. We will introduce them more precisely in Section \ref{sec:strategy of proof} below. Roughly, a point $u$ is regular if its cluster is not too dense around it. The definition in \cite{KozmaNachmias} is a bit more involved, which makes their analysis quite intricate. As we glimpsed above, this notion is relevant to make Theorem \ref{thm:pre reversed SL} effective. We use a more geometric notion of regular points (introduced in \cite{Asselah2025capacity}) to prove Theorem \ref{thm:main}. More details are provided in the next subsection.

Recently, two alternative proofs of Theorem~\ref{thm:onearm} have appeared. To place our work in context, we briefly review these approaches. In \cite{Asselah2025capacity}, the authors introduce a simpler notion of regular points, which simplifies part of the argument of Kozma and Nachmias. However, the overall strategy remains unchanged, and technical difficulties limit the approach to dimensions $d>7$. A different method is developed in \cite{vEGPSperco}, where the authors recover \eqref{eq:one-arm proba} using the so-called \emph{entropic bound} pioneered by Dewan and Muirhead \cite{DewanMuirhead} (see also \cite{hutchcroft2022derivation}). With this tool, the computation of the one-arm exponent reduces to showing that there exists $K$ large enough such that, for every sufficiently large $n$, 
\begin{equation}\label{eq:theta drops intro}
	\theta_n(p_c-K/n^2)\leq \frac{1}{10}\theta_{n/2}(p_c).
\end{equation}
As we explain below (see Lemma \ref{lem:theta drops}), this bound follows readily from the sharp length estimate $L(p)\asymp (p_c-p)^{-1/2}$. Consequently, \cite{vEGPSperco} obtains \eqref{eq:one-arm proba} in settings where this input can be established independently of \cite{KozmaNachmias}. This is the case for sufficiently spread-out percolation in dimensions ${d>6}$ \cite{DumPan24Perco}, but not the case in the current more general setting (indeed \cite{HutchcroftMichtaSladePercolationTorusPlateau2023} relies on \cite{KozmaNachmias}). 
In contrast, our approach provides a short and essentially self-contained proof of the sharp length estimate, but also bypasses the use of the entropic bound.

\subsection{Strategy of proofs}\label{sec:strategy of proof}
We now briefly discuss the main ideas of this paper. The novelties lie in the proofs of Theorems \ref{thm:pre reversed SL}, \ref{thm:main}, \ref{thm:sharplength}, and in the lower bounds of Theorems \ref{thm:nc bounds phi(S)} and \ref{thm:bounds 2pt} so we focus on these. We assume that $d>6$ and that \eqref{eq: 2pt full space estimate} holds.
\vspace{5pt}

Let us first discuss how to obtain bounds on $\varphi_{p_c}(S)$ for a finite $S\subset \mathbb Z^d$ containing $0$. It is quite easy to prove a lower bound on $\varphi_{p_c}(S)$. Indeed, by Lemma \ref{lem:SL}, if $x\notin S$, \begin{equation}\label{eq:SL}
	\tau_{p_c}(0,x)\leq \sum_{\substack{u\in S\\ v\notin S\\ u\sim v}}\tau_{S,p_c}(0,u)\cdot p_c\cdot \tau_{p_c}(v,x).
\end{equation}
If $|x|\geq 2\cdot\textup{diam}(S)$, \eqref{eq: 2pt full space estimate} gives that, for every $v$ as above, $\tau_{p_c}(v,x)\asymp \tau_{p_c}(0,x)$, so that
\begin{equation}
	\varphi_{p_c}(S)\gtrsim 1.
\end{equation}
Let us stress, however, that obtaining this bound does not require \eqref{eq: 2pt full space estimate}: the bound $\varphi_{p_c}(S)\geq 1$ holds in any dimension $d\geq 2$, see \cite{DuminilTassionNewProofSharpness2016}. 

It is thus rather natural to try to obtain the inequality $\varphi_{p_c}(S)\lesssim 1$ by deriving a ``reversed'' Simon--Lieb inequality. This is exactly what is performed in Proposition \ref{prop:reversed SL}. Using again \eqref{eq: 2pt full space estimate}, we see that Theorem \ref{thm:main} and Proposition \ref{prop:reversed SL} are equivalent. We will in fact obtain Proposition \ref{prop:reversed SL} as a corollary of Theorem \ref{thm:main}. Our purpose is to advertise that a good strategy to derive our main results involves trying to show that, in high dimensions, the application of the BK inequality to connectivity events is \emph{sharp} (up to a multiplicative constant). Such an idea is also at the core of our recent paper \cite{panis2025sharp}.

The strategy of reversing \eqref{eq:SL} to upper bound $\varphi_{p_c}(\mathbb H_n)$, where we recall that $\mathbb H_n=\mathbb H-(n,0,\ldots,0)$, was pursued in \cite{DumPan24Perco} (see also \cite{DumPan24WSAW} in the context of weakly self-avoiding walks). However, \eqref{eq:reversed SL} is within a \emph{multiplicative} constant from the upper bound of \eqref{eq:SL}, while in \cite{DumPan24Perco} it is within an \emph{additive} ``error term''. Estimating the latter for a general $S$ is extremely difficult as it involves a priori estimates on $\tau_{S,p_c}$ (which are usually not available). Let us also mention that \cite{HutchcroftMichtaSladePercolationTorusPlateau2023} obtains an upper bound on $\varphi_{p_c}(\mathbb H_n)$ by proving the existence of $c,n_0>0$ such that, for every $\ell,k\geq 0$, 
\begin{equation}
\varphi_{p_c}(\mathbb H_{k+\ell+n_0})\geq c\cdot \varphi_{p_c}(\mathbb H_k)\cdot \varphi_{p_c}(\mathbb H_{\ell}),
\end{equation} 
see \cite[Proposition~3.2]{HutchcroftMichtaSladePercolationTorusPlateau2023}.
Such an inequality can be viewed as an averaged reversed Simon--Lieb inequality. However, their analysis uses translation invariance and requires bounds on $\tau_{\mathbb H,p_c}$, and hence cannot be used to bound $\varphi_{p_c}(S)$ for a general set $S$. 

We now discuss the main intuition behind the derivation of Theorems \ref{thm:pre reversed SL} and \ref{thm:main}. We start by looking at the counterpart of Lemma \ref{lem:SL} in the context of random walks. Exploring the random walk until it exits $S$ for the first time, and applying Markov's (strong) property, yields, for every $x\notin S$,
\begin{equation}
	\mathbb G(0,x)=\sum_{\substack{u\in S\\v\notin S\\u\sim v}}\mathbb G_S(0,u)\cdot \mathbf P_0[X_1=v-u]\cdot \mathbb G(v,x).
\end{equation}
In the setting of Bernoulli percolation, the situation is considerably more delicate. Two main obstacles prevent a direct adaptation of the random-walk argument: 
\begin{enumerate}
	\item[$(a)$] Since we are considering a percolation model rather than a path model, the ``first edge exiting $S$'' is not necessarily well-defined.
	\item[$(b)$] Ignoring $(a)$ and letting $\{u,v\}$ be such that $0\connect{S\:}u$ and $v\connect{}x$, we observe that the two connections are not independent. Indeed, it is possible that the connection between $v$ and $x$ visits the cluster that has been explored to reveal the connection between $0$ and $u$ in $S$. This difficulty arises from the non-Markovian nature of the cluster exploration.
\end{enumerate}
We solve these problems as follows. 

First, to mark a \emph{special} edge on the boundary of $S$, we observe that we may create a connection from $0$ to $x$ by asking, for an edge $\{u,v\}$ with $u \in S$ and $v\notin S$, that the following event occurs: 
\begin{equation}\label{eq:def E}
\mathcal E_{uv}:=\{0\connect{S\:}u\}\cap \{\{u,v\} \textup{ is open}\}\cap \{v\connect{}x\}\cap \{0\connect{\omega^{[uv]}\:}x\}^c,
\end{equation}
where $\omega^{[uv]}$ is the percolation configuration obtained from $\omega$ by closing the edge $\{u,v\}$. In words, this event means that the edge $\{u,v\}$ is open and pivotal for $\{0\connect{}x\}$, \emph{and} that $0\connect{S\:}u$. We observe that a very similar event appears in the analysis of the susceptibility of the model \cite{AizenmanNewmanTreeGraphInequalities1984}. As we prove below, the events $\mathcal E_{uv}$ are incompatible, thus providing a unique way to mark an edge $\{u,v\}$ on the boundary of $S$. At this stage, we may be losing a lot since this construction may not capture the true structure of a connection from $0$ to $x$. Nevertheless, this is enough to get Theorem \ref{thm:pre reversed SL}, and we will see retrospectively that such a picture is true in the high-dimensional setting.

Second, to solve $(b)$, we rely on an extra ingredient due to Aizenman \cite{AizenmanNumberIncipient1997}: 
in dimensions $d>6$, it holds (with high probability) that $|\mathcal C(u)\cap \Lambda_n(u)|\lesssim n^{4+o(1)}$ for every $n$ large enough (where $o(1)$ tends to $0$ as $n$ tends to infinity). Let $A$ be a sample of $\mathcal C(u)$ satisfying these bounds. Then, by \eqref{eq: 2pt full space estimate},
\begin{equation}
\mathbb E_{p_c}\big[|\mathcal C(v)\cap A|\big]\lesssim\sum_{k\geq 0}\frac{(2^k)^{4+o(1)}}{2^{k(d-2)}}\lesssim 1,
\end{equation}
where we used that $d>6$ in the last inequality. This computation suggests (but does not prove) that, conditionally on its existence, the connection from $v$ to $x$ has a positive probability to occur in the complement of $\mathcal C(u)$. See Figure \ref{fig:reversed SL} for an illustration. 

\begin{figure}[htb]
	\begin{center}
	\includegraphics{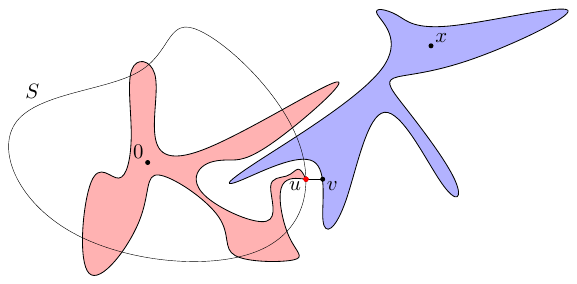}
	\caption{An illustration of the event $\mathcal E_{uv}$ introduced in \eqref{eq:def E}. The cluster of $u$ (resp. $v$) in $\omega^{[uv]}$ is the light red (resp. blue) region. Our entire strategy revolves around the observation that, if $u$ is regular and $d>6$, then the blue and red regions have a positive probability to avoid each other. 
	}
	\label{fig:reversed SL}
	\end{center}
\end{figure}

The points $u\in \partial S$ which satisfy the above property are called \emph{regular points}. They were first introduced by Kozma and Nachmias \cite{KozmaNachmias} in the context of arm exponents computations. Here, we rely on a more recent (and easier) perspective on this notion that was used in \cite{Asselah2025capacity,panis2025sharp}. A precise definition is given in Section \ref{sec:reg,escap,effective}.  

To prove Theorem \ref{thm:main}, we first establish (see Proposition \ref{prop: effective reversed SL}) a modification of Theorem \ref{thm:pre reversed SL} in which we further ask $u$ to be regular (in fact we will also require $u$ to be \emph{escapable}, see Definition \ref{def: escap}). We view this result as an effective version of Theorem \ref{thm:pre reversed SL}, in the sense that it is well-suited for the study of high-dimensional percolation. Combining this with \eqref{eq: 2pt full space estimate}, and letting $\mathcal{P}_S^{\textup{reg}}:=|\{u\in \partial S: 0\connect{S\:}u, \: u\textup{ is regular}\}|$, we obtain
\begin{equation}\label{eq:stepA}
	\mathbb E_{p_c}[\mathcal{P}_S^{\textup{reg}}]\lesssim 1,
\end{equation}
where the implicit constant does not depend on $S$.

 The second part of the argument is completely independent, and follows  the strategy used in our previous paper \cite[Lemma~4.3]{panis2025sharp}. Recalling that $\mathcal P_S=|\{u\in \partial S: 0\connect{S\:}u\}|$ denotes the number of pioneers of $S$, we prove that
\begin{equation}\label{eq:stepB}
	\mathbb E_{p_c}[\mathcal P_S]\lesssim \mathbb E_{p_c}[\mathcal P_S^{\textup{reg}}],
\end{equation}
see Lemma \ref{lem: pioneers are in average regular} for a precise statement.
In words, we show that, on average, pioneers are regular.
The combination of \eqref{eq:stepA} and \eqref{eq:stepB} readily gives Theorem \ref{thm:main} by \eqref{eq:comparison pioneers and phi}.

\vspace{5pt}

The above analysis relies on \eqref{eq: 2pt full space estimate}, which provides 
a complete understanding of $\tau_{p_c}$. In the regime $p < p_c$, however, no 
lower bound on $\tau_p$ is available. The second set of innovations of this paper 
concerns the analysis of Theorem~\ref{thm:pre reversed SL} in this more 
challenging regime. We begin by describing how Theorem~\ref{thm:sharplength} is 
obtained. As in \cite{DumPan24Perco}, the idea is to build on the following result of Aizenman and Newman \cite{AizenmanNewmanTreeGraphInequalities1984}: if $d>6$ and \eqref{eq: 2pt full space estimate} holds, then 
\begin{equation}\label{eq:susceptibility}
	\chi(p)=\sum_{x\in \mathbb Z^d}\tau_p(0,x)\asymp (p_c-p)^{-1}.
\end{equation}
We will prove that $\chi(p) \asymp L(p)^2$, from which Theorem~\ref{thm:sharplength} 
follows immediately. The main difficulty lies in establishing the lower bound 
$\chi(p) \gtrsim L(p)^2$. This result would be relatively easy if one had a version of Theorem \ref{thm:pre reversed SL} without the restriction to $\mathcal C(u)^c$, see Figure \ref{fig:proof L(p)}. As it turns out, working with the averaged quantity $\chi(p)$ allows 
us to use the strategy of Theorem \ref{thm:main} to handle this restriction without using any pointwise lower bound on $\tau_p$. The proofs of the lower bounds in Theorems~\ref{thm:nc bounds phi(S)} 
and~\ref{thm:bounds 2pt} follow a similar but technically more involved 
argument. The additional difficulty being that, unlike before, we now work in a regime 
where $\varphi_p(S)$ may be very small.
\vspace{5pt}

We conclude the introduction by mentioning that the techniques developed in this 
paper are, we believe, robust enough to be used beyond the setting of Bernoulli 
percolation. In a forthcoming work~\cite{PanIsing}, they will be applied to study the restricted two-point functions of the Ising model in dimensions $d > 4$.

\begin{Org} In Section \ref{sec:SL and its reverse proof}, we give the (short) proofs of Lemma \ref{lem:SL} and Theorem \ref{thm:pre reversed SL}. Section \ref{sec:reg,escap,effective} gives a precise description of the new tool for high-dimensional percolation presented in this paper. In Section \ref{sec: subsec reg esc}, we introduce the key concepts which help to use Theorem \ref{thm:pre reversed SL} in the high-dimensional setting. The effective version of Theorem \ref{thm:pre reversed SL} that is used throughout is presented there, in Proposition \ref{prop: effective reversed SL}. In Section \ref{sec: subsec average}, we prove that, on average,  pioneers are regular (and a little bit more), see Proposition \ref{prop: in average pioneers are regular and escapable}. Therefore, the restrictions of Proposition \ref{prop: effective reversed SL} are (on average) harmless. In Section \ref{sec:proof bound phi(S)}, we prove Theorem \ref{thm:main} as well as two easy consequences of it: Proposition \ref{prop:reversed SL} and Corollary \ref{cor:partial monot}. In Section \ref{sec:sharp length}, we prove Theorem \ref{thm:sharplength} and immediately apply it to derive the upper bounds in Theorems \ref{thm:nc bounds phi(S)} and \ref{thm:bounds 2pt}. The short proof of Theorem \ref{thm:onearm} is carried out in Section \ref{sec:one arm}. Section \ref{sec:nc lb} is devoted to the more demanding proofs of the lower bounds in Theorems \ref{thm:nc bounds phi(S)} and \ref{thm:bounds 2pt}.
 
\end{Org}

\begin{Acknowledgements} RP thanks Hugo Duminil-Copin, Diederik van Engelenburg, Christophe Garban, and Franco Severo for stimulating discussions regarding the problem. We thank Geoffrey Grimmett, Aman Markar, and Akira Sakai for useful comments. RP acknowledges the support of the Swiss National Science Foundation through a Postdoc.Mobility grant. BS acknowledges support from the grant ANR-22-CE40-0012 (project LOCAL).
\end{Acknowledgements}

\section{Proof of the partially reversed Simon--Lieb inequality}\label{sec:SL and its reverse proof}

In this section, we prove Lemma \ref{lem:SL} and Theorem \ref{thm:pre reversed SL}. We let $d\geq 2$ and fix $p\in (0,1)$.

\begin{proof}[Proof of Lemma~\textup{\ref{lem:SL}}] We fix $S\subset \Lambda \subset \mathbb Z^d$. Let $o\in S$ and $x \in \Lambda$. Observe that
\begin{equation}
	\{o\connect{\Lambda\:}x\}\setminus \{o\connect{S\:}x\}\subset \bigcup_{\substack{u\in S\\ v\notin S\\ u\sim v}}\{o\connect{S\:}u\}\circ \{\{u,v\} \textup{ is open}\}\circ \{v\connect{\Lambda\:}x\}.
\end{equation}
Using the BK inequality \eqref{eq:BK ineq}, we obtain
\begin{equation}\label{eq:SL proof}
	\tau_{\Lambda,p}(o,x)-\tau_{S,p}(o,x)\leq \sum_{\substack{u\in S\\ v\notin S\\ u\sim v}}\tau_{S,p}(o,u)\cdot p\cdot \tau_{\Lambda,p}(v,x).
\end{equation}
This concludes the proof.
\end{proof} 

We now turn to the proof of Theorem \ref{thm:pre reversed SL}. We follow closely the strategy described in Section \ref{sec:strategy of proof}.

\begin{proof}[Proof of Theorem \textup{\ref{thm:pre reversed SL}}] We fix $S\subset \Lambda\subset \mathbb Z^d$ and $o\in S$ and $x\in \Lambda$. We first prove the result when $\Lambda$ is finite. The general result follows by approximation (using the monotone convergence theorem). 

Recall the definition of the event $\mathcal E_{uv}$ from Section \ref{sec:strategy of proof}: if $u\in S$ and $v\notin S$ with $u\sim v$, 
\begin{equation}
	\mathcal E_{uv}=\{o\connect{S\:}u\}\cap \{\omega_{uv}=1\}\cap \{v\connect{\Lambda\:}x\}\cap \{o\connect{\omega^{[uv]}\cap \Lambda\:}x\}^c,
\end{equation}
where $\omega^{[uv]}$ is the percolation configuration obtained from $\omega$ by closing the edge $\{u,v\}$. We make three simple observations:
\begin{enumerate}
	\item[$(i)$] the following inclusion holds: $\{o\connect{\Lambda\:}x\}\setminus\{o\connect{S\:}x\}\supset \bigcup_{\substack{u\in S\\v\notin S\\u\sim v}}\mathcal E_{uv}$;
	\item[$(ii)$] one has $\mathcal E_{uv}=\{o\connect{S\:}u\}\cap \{\omega_{uv}=1\} \cap\{v\connect{}x \textup{ in }\Lambda\setminus \mathcal C^{[uv]}(u;\Lambda)\}$ where $\mathcal C^{[uv]}(u;\Lambda)$ denotes the cluster of $u$ in $\omega^{[uv]}\cap \Lambda$;
	\item[$(iii)$] the events $(\mathcal E_{uv})_{u\in S, v\notin S, u\sim v}$ are incompatible.
\end{enumerate}
Let us briefly justify these facts. The first point $(i)$ is immediate, and so is $(ii)$ once we have noticed that if $v$ or $x$ belongs to $A$ then $\{v\connect{}x\text{ in }\Lambda\setminus A\}$ cannot occur. We turn to the proof of $(iii)$. 
If $\mathcal E_{uv}\cap \mathcal{E}_{u'v'}$ occurs, any open path from $o$ to $x$ has to go through $\{u,v\}$ \textit{and} $\{u',v'\}$, and the removal of one of the two edges should destroy the connection between $0$ and $x$. This situation cannot occur as illustrated in Figure \ref{fig:event E}. 
\begin{figure}
	\begin{center}
		\includegraphics{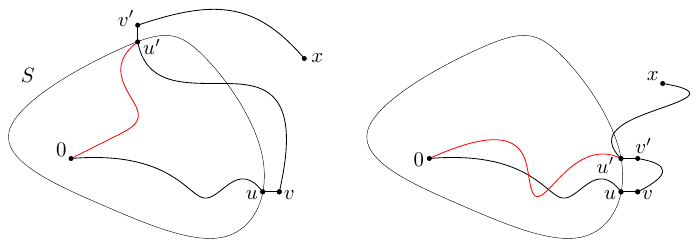}
		\caption{An illustration of why $\mathcal E_{uv}$ and $\mathcal E_{u'v'}$ cannot occur simultaneously. The bold lines represent open paths in a percolation configuration that would satisfy both events. We observe that in each case, we can close $\{u,v\}$ without destroying the connection between $0$ and $x$.}
		\label{fig:event E}
	\end{center}
\end{figure}

Combining $(i)$ and $(iii)$ yields
\begin{equation}\label{eq:pre reversed SL1}
	\tau_{\Lambda,p}(o,x)-\tau_{S,p}(o,x)=\mathbb P_p[\{o\connect{\Lambda\:}x\}\setminus\{o\connect{S\:}x\}]\geq \sum_{\substack{u\in S\\v\notin S\\u\sim v}}\mathbb P_p[\mathcal E_{uv}].
\end{equation}
It remains to express $\mathbb P_p[\mathcal E_{uv}]$ for $u,v$ as above. For this, we use $(ii)$ above. Decomposing the probability according to the value of $\mathcal C^{[uv]}(u;\Lambda)$, and observing that the three events then obtained are independent (because they are measurable with respect to disjoint sets of edges) gives that
\begin{align}\notag
	\mathbb P_p[\mathcal E_{uv}]&=\sum_{A}\mathds{1}\{o\connect{A\cap S\:}u\}\mathbb P_p[\mathcal C^{[uv]}(u;\Lambda)=A]\cdot p\cdot \tau_{\Lambda\setminus A,p}(v,x)
	\\&= \mathbb E_p\Big[\mathds{1}\{o\connect{S\:}u\}\cdot p\cdot \tau_{\Lambda \setminus\mathcal C^{[uv]}(u;\Lambda),p}(v,x)\Big]\notag
	\\&= \mathbb E_p\Big[\mathds{1}\{o\connect{S\:}u\}\cdot p\cdot \tau_{\Lambda \setminus\mathcal C(u;\Lambda),p}(v,x)\Big],\label{eq:pre reversed SL2}
\end{align}
where in the first equality we abused notation by viewing $\mathcal C^{[uv]}(u;\Lambda)$ as both the edge and the vertex cluster of $u$. Plugging \eqref{eq:pre reversed SL2} into \eqref{eq:pre reversed SL1} concludes the proof.
\end{proof}

\section{Regularity, escapability, and an effective version of Theorem \ref{thm:pre reversed SL}}\label{sec:reg,escap,effective}
In this section, we present the effective version of Theorem \ref{thm:pre reversed SL} that we will use throughout to analyse the mean-field regime of Bernoulli percolation. In the rest of the paper, we assume that $d>6$ and that \eqref{eq: 2pt full space estimate} holds. We start by introducing the key concepts of \emph{regularity} and \emph{escapability}, and then prove that, on average, pioneers satisfy these properties.
\subsection{Regular and escapable points}\label{sec: subsec reg esc}
For $\Lambda\subset \mathbb Z^d$ and $u\in \Lambda$,
we write $\mathcal C(u;\Lambda)$ for the cluster of $u$ in $\Lambda$. For every $s>0$ and $T>0$, we consider the event 
\begin{equation}
	\mathcal T_{s,T}^\Lambda(u) :=  \Big\{ |\mathcal C(u;\Lambda)\cap \Lambda_s(u)| \leq T s^4 (\log s)^7\Big\}. 
\end{equation}
When $\Lambda=\mathbb Z^d$ we drop it from the notation. 
\begin{Def}[Regular points] Given $\Lambda\subset \mathbb Z^d$ and $K,T>0$, we call $u\in \Lambda$ a \emph{$(K,T)$-regular point in $\Lambda$} if the events $\mathcal T_{s,T}^\Lambda(u)$ hold for every $s\ge K$. When $\Lambda=\mathbb Z^d$, we refer to $(K,T)$-regular points for short.
\end{Def}
\begin{Rem}\label{rem: regular points} $(1)$ Throughout the paper, the parameter $T$ takes two fixed values $T=1$ or $T=20$. 

$(2)$ Note that $(K,T)$-regular points are $(K,T)$-regular in $\Lambda$ for every 
$\Lambda \subset \mathbb{Z}^d$. 
\end{Rem}

We denote by $\mathcal{P}_{S,\Lambda}^{K\textup{-reg}}$ the number of pioneers of $S$ that are $(K,1)$-regular in $\Lambda$. Again, when $\Lambda=\mathbb Z^d$, we drop it from the notation. 
As seen above, regular points are natural to consider when trying to reverse the Simon--Lieb inequality. We will need an additional technical definition below, see Figure \ref{fig:esc} for an illustration. 

\begin{Def}[Escapable points]\label{def: escap}Let $S\subset \Lambda \subset \mathbb Z^d$ with $S$ containing $0$. Given $K>0$ and $u\in \partial S$, we say that $u$ is \emph{$K$-escapable in $\Lambda$} if there exists $v\in \Lambda\setminus S$ with $v\sim u$ which satisfies the following: there exists $w\in \Lambda_{K}(v)\cap \Lambda$ with $\mathrm{d}(w,\mathcal C(u;\Lambda))> \tfrac{K}{10}$, and a path $\gamma:v\rightarrow w$ inside $\Lambda_K(v)\cap\Lambda$ such that $\gamma$ avoids $\mathcal C(u;\Lambda)$. When $\Lambda=\mathbb Z^d$, we refer to $K$-escapable points for short.
\end{Def}
\begin{figure}[htb]
	\begin{center}
		\includegraphics[scale=1.1]{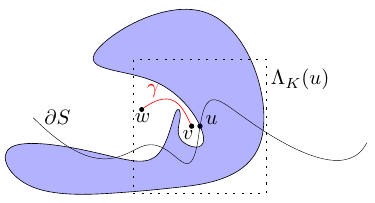}
		\caption{An illustration of a vertex $u$ that is $K$-escapable. The blue region represents $\mathcal C(u)$.}
		\label{fig:esc}
	\end{center}
\end{figure}

We consider the set $\mathcal X^{(K,T)}_{S,\Lambda}$ of pioneers of $S$ that are $(K,T)$-regular and $K$-escapable in $\Lambda$, and let $\mathcal X^{K}_{S,\Lambda}=\mathcal X^{(K,20)}_{S,\Lambda}$. If $u\in\mathcal X_{S,\Lambda}^{K}$, we denote by $(\mathbf{v},\mathbf{w},\boldsymbol{\gamma})$ the earliest triplet (according to some arbitrary order) given by the $K$-escapability in $\Lambda$ of $u$. We say that a realisation $A$ of $\mathcal C(u;\Lambda)$ is $u$-\emph{good} if\footnote{We note that this definition also depends on the pair $(S,\Lambda)$. However, to lighten notations we do not include them. In what follows, it will be clear from context what $S,\Lambda$ are.} it is such that $\mathbb P_p[\mathcal C(u;\Lambda)=A]>0$ and $u\in \mathcal X_{S,\Lambda}^K$. We stress that $(\mathbf{v},\mathbf{w},\boldsymbol{\gamma})$ is entirely measurable in terms of $\mathcal C(u;\Lambda)$. When $\Lambda = \mathbb{Z}^d$, we drop it from the notation.

The notion of escapable points will help to create \emph{open diagrams} in the upcoming analysis, see for instance \eqref{eq:pf main rep5} below. This is often an essential technical step in the study of Bernoulli percolation in dimensions $d>6$ (see for instance the notion of $K$-good line in \cite{Asselah2025capacity}).

With these definitions in hand, we can state the effective version of Theorem \ref{thm:pre reversed SL} we will use in this paper.

\begin{Prop}\label{prop: effective reversed SL} Let $K\geq 1$. There exists $c=c(K,d)>0$ such that the following holds. For every $p\in [p_c/2,p_c]$, and every $S\subset \Lambda \subset \mathbb Z^d$ with $S$ containing $0$ and $x\in \Lambda$,\begin{equation}\label{eq:effective SL 1}
	\tau_{\Lambda,p}(0,x)-\tau_{S,p}(0,x)\geq c\sum_{\substack{u\in \partial S}}\mathbb E_p\Big[\mathds{1}\{u \in \mathcal X^K_{S,\Lambda}\}\cdot\tau_{\Lambda\setminus \mathcal C(u;\Lambda),p}(\mathbf{w},x)\Big].
\end{equation}
In particular,
\begin{equation}\label{eq:effective SL 2}
		\tau_{\Lambda,p}(0,x)-\tau_{S,p}(0,x)\geq c\sum_{\substack{u\in \partial S}}\mathbb E_p\Big[\mathds{1}\{u \in \mathcal X^K_{S,\Lambda}\}\cdot\Big(\tau_{\Lambda,p}(\mathbf{w},x)-\mathsf E_\Lambda(\mathbf{w},x;\mathcal C(u;\Lambda);p)\Big)\Big],
\end{equation}
where for $A\subset \mathbb Z^d$,
\begin{equation}\label{eq:def additive error term}
	\mathsf E_\Lambda(\mathbf{w},x;A;p)=\sum_{z\in A}\tau_{\Lambda,p}(\mathbf{w},z)\tau_{\Lambda,p}(z,x).
\end{equation}
\end{Prop}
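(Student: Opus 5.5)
We derive the statement from Theorem~\ref{thm:pre reversed SL}, applied with $o=0$. We keep only the terms where $u\in\mathcal X^K_{S,\Lambda}$ and $v=\mathbf v$; all other terms are nonnegative and are discarded. This gives
\begin{equation*}
\tau_{\Lambda,p}(0,x)-\tau_{S,p}(0,x)\ge \sum_{u\in\partial S}\mathbb E_p\Big[\mathds 1\{u\in\mathcal X^K_{S,\Lambda}\}\cdot p\cdot \tau_{\Lambda\setminus\mathcal C(u;\Lambda),p}(\mathbf v,x)\Big].
\end{equation*}
The indicator $\mathds 1\{0\connect{S\:}u\}$ is absorbed, since pioneers satisfy it. The triple $(\mathbf v,\mathbf w,\boldsymbol\gamma)$ is measurable with respect to $\mathcal C(u;\Lambda)$, so we may condition on $\mathcal C(u;\Lambda)=A$ with $A$ $u$-good. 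It then suffices to show, for each such $A$,
\begin{equation*}
\tau_{\Lambda\setminus A,p}(\mathbf v,x)\ge c(K,d)\,\tau_{\Lambda\setminus A,p}(\mathbf w,x).
\end{equation*}
This is a finite-energy/FKG step. The path $\boldsymbol\gamma$ runs from $\mathbf v$ to $\mathbf w$ inside $\Lambda_K(\mathbf v)\cap\Lambda$, avoids $A$, and may be taken self-avoiding, so its length is at most $|\Lambda_K|\lesssim K^d$. The event that all edges of $\boldsymbol\gamma$ are open is increasing and lives in $\Lambda\setminus A$. The event $\{\mathbf w\connect{\Lambda\setminus A\:}x\}$ is also increasing. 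By \eqref{eq:FKG},
\begin{equation*}
\tau_{\Lambda\setminus A,p}(\mathbf v,x)\ge p^{|\boldsymbol\gamma|}\,\tau_{\Lambda\setminus A,p}(\mathbf w,x)\ge (p_c/2)^{CK^d}\tau_{\Lambda\setminus A,p}(\mathbf w,x).
\end{equation*}
Together with the extra factor $p\ge p_c/2$, this yields \eqref{eq:effective SL 1} with $c=(p_c/2)^{CK^d+1}$. Note that this step does not use regularity or the condition $\mathrm d(\mathbf w,A)>K/10$; those only matter for later estimates of the error term.

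For \eqref{eq:effective SL 2}, fix $A$ and bound $\tau_{\Lambda,p}(\mathbf w,x)-\tau_{\Lambda\setminus A,p}(\mathbf w,x)$. Suppose $\mathbf w\connect{\Lambda\:}x$ holds but not inside $\Lambda\setminus A$. Then every open path in $\Lambda$ from $\mathbf w$ to $x$ hits $A$. Let $z\in A$ be the first vertex of $A$ on such a path. Then $\{\mathbf w\connect{\Lambda\:}z\}\circ\{z\connect{\Lambda\:}x\}$ occurs, using the two pieces of a self-avoiding path. Here $A$ is a deterministic set, not a random cluster, so the union bound and \eqref{eq:BK ineq} give
\begin{equation*}
\tau_{\Lambda,p}(\mathbf w,x)-\tau_{\Lambda\setminus A,p}(\mathbf w,x)\le \sum_{z\in A}\tau_{\Lambda,p}(\mathbf w,z)\,\tau_{\Lambda,p}(z,x)=\mathsf E_\Lambda(\mathbf w,x;A;p).
\end{equation*}
This also covers the case $\mathbf w\in A$ or $x\in A$, where the left-hand connection inside $\Lambda\setminus A$ is impossible but the sum contains the corresponding diagonal term. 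Substituting this into \eqref{eq:effective SL 1} gives \eqref{eq:effective SL 2}.

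The step that needs the most care is the conditioning and measurability bookkeeping. We must check that the triple $(\mathbf v,\mathbf w,\boldsymbol\gamma)$ and the event $u\in\mathcal X^K_{S,\Lambda}$ are functions of $\mathcal C(u;\Lambda)$ alone. This holds for regularity and escapability, and for the pioneer property because the path from $0$ to $u$ in $S$ lies in $\mathcal C(u;\Lambda)$. We must also check that the expression in Theorem~\ref{thm:pre reversed SL}, written with $\mathcal C(u;\Lambda)$ rather than $\mathcal C^{[uv]}$, is exactly what was proved there, so that the conditioning is legitimate. The analytic content is just the two FKG/BK estimates above.
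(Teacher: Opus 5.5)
Your proposal is correct and follows essentially the same route as the paper. You apply Theorem~\ref{thm:pre reversed SL} and keep only $u\in\mathcal X^K_{S,\Lambda}$ with $v=\mathbf v$. You then condition on a $u$-good cluster $A$ and use \eqref{eq:FKG} along $\boldsymbol\gamma\subset\Lambda\setminus A$ to pass from $\mathbf v$ to $\mathbf w$. Finally, \eqref{eq:BK ineq} bounds $\tau_{\Lambda,p}(\mathbf w,x)-\tau_{\Lambda\setminus A,p}(\mathbf w,x)$ by $\mathsf E_\Lambda(\mathbf w,x;A;p)$.

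As in the paper, the conditioning has to be on the edge cluster, not just its vertex set, so that $\{0\connect{S\:}u\}$ is determined by $A$. The paper uses the same abuse of notation.
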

\begin{proof} Applying Theorem \ref{thm:pre reversed SL}, we obtain
\begin{align}\notag
	\tau_{\Lambda,p}(0,x)-\tau_{S,p}(0,x)&\geq p\sum_{\substack{u\in \partial S\\ v\notin S\\u\sim v}}\mathbb E_p\Big[\mathds{1}\{0\connect{S\:}u\}\cdot  \tau_{\Lambda\setminus \mathcal C(u;\Lambda),p}(v,x)\Big]\\&\geq \frac{p_c}{2}\sum_{u\in \partial S}\mathbb E_p\Big[\mathds{1}\{u\in \mathcal X_{S,\Lambda}^K\}\cdot \tau_{\Lambda\setminus \mathcal C(u;\Lambda),p}(\mathbf{v},x)\Big]\notag
	\\&=\frac{p_c}{2}\sum_{u\in \partial S}\sum_{A \textup{ $u$-good}}\mathbb P_p[\mathcal C(u;\Lambda)=A]\tau_{\Lambda\setminus A,p}(\mathbf{v},x).\label{eq:pf prop effective rev SL 1}
\end{align}
Under the event $\{\mathcal C(u;\Lambda)=A\}$ for a $u$-good $A$, the $K$-escapability (in $\Lambda$) property of $u$ provides a triplet $(\mathbf{v},\mathbf{w},\boldsymbol{\gamma})$ which satisfies the properties of Definition \ref{def: escap}. In particular,
\begin{equation}\label{eq:pf prop effective rev SL 2}
	\tau_{\Lambda\setminus A,p}(\mathbf{v},x)\geq \mathbb P_p[\boldsymbol{\gamma} \textup{ is open}]\cdot\tau_{\Lambda\setminus A,p}(\mathbf{w},x)\geq c_1 \cdot \tau_{\Lambda\setminus A,p}(\mathbf{w},x),
\end{equation}
where we used the FKG inequality \eqref{eq:FKG} in the first inequality, and where $c_1=c_1(K,d)>0$. This proves \eqref{eq:effective SL 1}. 

We turn to the proof of \eqref{eq:effective SL 2}. For $u \in \partial S$ fixed and a fixed $u$-good $A$ (and its associated $\mathbf{w}$), we observe that 
\begin{equation}
\{\mathbf{w}\connect{\Lambda\:}x\}\setminus\{\mathbf{w}\connect{\Lambda\setminus A\:}x\}\subset \bigcup_{z\in A}\{\mathbf{w}\connect{\Lambda\:}z\}\circ\{z\connect{\Lambda\:}x\},
\end{equation}
so that the BK inequality \eqref{eq:BK ineq} gives 
\begin{equation}\label{eq:pf prop effective rev SL 3}
	\tau_{\Lambda,p}(\mathbf{w},x)-\tau_{\Lambda\setminus A,p}(\mathbf{w},x)\leq \sum_{z\in A}\tau_{\Lambda,p}(\mathbf{w},z)\tau_{\Lambda,p}(z,x)=\mathsf{E}_{\Lambda}(\mathbf{w},x;A;p).
\end{equation}
Plugging \eqref{eq:pf prop effective rev SL 2} and \eqref{eq:pf prop effective rev SL 3} in \eqref{eq:pf prop effective rev SL 1} and setting $c(K,d):=(p_c/2)\cdot c_1$ concludes the proof. 
\end{proof}

\subsection{On average, pioneers are regular and escapable}\label{sec: subsec average}
In Proposition~\ref{prop: effective reversed SL}, pioneers of $S$ were required 
to belong to $\mathcal{X}^K_{S,\Lambda}$. This restriction could a priori be 
very costly. Our next result shows that this is not the case as, on average, pioneers are regular and escapable. Throughout this subsection we take $\Lambda = \mathbb{Z}^d$. Extensions 
to other choices of $(S,\Lambda)$ are discussed in Remarks~\ref{rem: pioneers are regular in average can be made more general} and \ref{rem:comparing mathcal X to pioneers} below. 

 \begin{Prop}\label{prop: in average pioneers are regular and escapable} There exists $K_0>0$ such that the following holds. For every $K\geq K_0$, there exists $c_0=c_0(K,d)>0$ such that, for every $p\in [p_c/2,p_c]$, and every finite set $S\subset \Lambda_{L(p)}$ containing $0$ (with the convention $\Lambda_{L(p_c)} = \mathbb Z^d$),
 \begin{equation}
 	\mathbb E_p\big[|\mathcal X^K_{S}|\big]\geq c_0\cdot \mathbb E_p[\mathcal P_S].
 \end{equation}
 \end{Prop}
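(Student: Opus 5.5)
We want to show that a positive fraction (in expectation) of pioneers are $(K,20)$-regular and $K$-escapable. The plan is to split $\mathbb E_p[\mathcal P_S]$ into the contributions of pioneers that fail regularity and those that are regular but fail escapability, and to bound each by something small compared with $\mathbb E_p[\mathcal P_S]$.

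\textbf{Step 1: non-regular pioneers.} For a fixed $u\in\partial S$ and $s\ge K$, the event $\{0\connect{S\:}u\}\cap \mathcal T_{s,20}(u)^c$ is controlled via a tree-graph / Aizenman-type argument. On this event, there are more than $20 s^4(\log s)^7$ points $z\in \mathcal C(u)\cap\Lambda_s(u)$. Together with $0\connect{S\:}u$, this forces a branching: for each such $z$ there is a vertex $y$ with $\{0\connect{S\:}y\}\circ\{y\connect{}u\}\circ\{y\connect{}z\}$. Using BK and a moment (Markov) bound on $|\mathcal C(u)\cap \Lambda_s(u)|$ conditioned on the pioneer event, we get
\begin{equation*}
\mathbb P_p[0\connect{S\:}u,\ \mathcal T_{s,20}(u)^c]\lesssim \frac{1}{s^4(\log s)^7}\sum_{y}\tau_{S,p}(0,y)\,\tau_p(y,u)\,\mathbb E\big[|\mathcal C(y)\cap\Lambda_{2s}(u)|\big].
\end{equation*}
To avoid the factor $s^2$ from $\mathbb E|\mathcal C\cap \Lambda_s|\asymp s^2$, it is better to use the higher moments of Aizenman \cite{AizenmanNumberIncipient1997}, namely $\mathbb E[|\mathcal C(u)\cap\Lambda_s|^k]\lesssim s^{4k-2}$ for $d>6$. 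The key point is to keep the factor $\tau_{S,p}(0,y)$ with $y$ close to $u$, so that summing over $u\in\partial S$ is comparable to $\mathbb E_p[\mathcal P_S]$. Following \cite[Lemma~4.3]{panis2025sharp}, I would replace $y$ by a pioneer-type point near it. Either use that $y\connect{S}$-reaches $\partial S$ nearby and estimate $\sum_{u}\tau_p(y,u)\mathbf 1_{|u-y|\le s}$, or decompose dyadically in $|y-u|$. The restriction $S\subset\Lambda_{L(p)}$ is needed here: on scales below $L(p)$, the subcritical $\tau_p$ still satisfies the critical upper bound, and also $\varphi_p(S)\gtrsim 1$ by the definition of $L(p)$. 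This lets us compare $\sum_{u}\tau_{S,p}(0,u)$ with harmonic-type averages. Summing over dyadic $s\ge K$, the $(\log s)^{-7}$ factor makes the sum converge, so it is at most $\varepsilon(K)\,\mathbb E_p[\mathcal P_S]$ with $\varepsilon(K)\to0$.

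\textbf{Step 2: regular but non-escapable pioneers.} Here I would use a local modification (finite-energy) argument. Fix $u$ with $0\connect{S\:}u$ and a neighbour $v\notin S$. Regularity at scale $K$ gives $|\mathcal C(u)\cap\Lambda_K(u)|\le 20K^4(\log K)^7\ll K^{d}$ (since $d>6$). Hence most points $w$ of $\Lambda_K(v)$ lie at distance $>K/10$ from $\mathcal C(u)$; a volume count of $K/10$-neighbourhoods gives this as long as $K^4(\log K)^7 (K/10)^d\ll K^d$, which fails. So instead I would pick a straight line segment from $v$ in a direction avoiding the cluster, in the spirit of the $K$-good lines of \cite{Asselah2025capacity}. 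Among the $\asymp K^{d-1}$ disjoint-ish rays, those hit by the $K/10$-fattened cluster number at most $K^4(\log K)^7\cdot K^{d-2}$, which is still too many. This forces a multiscale use of regularity (small scales $s$ around $u$) to show the cluster near $v$ is thin. If this fails, the fallback is to close a bounded number of edges near $u$ at a cost $c(K)>0$ and rewire. This maps a non-escapable pioneer configuration to an escapable one with comparable probability, with bounded multiplicity.

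\textbf{Main obstacle.} I expect Step 1 to be the main difficulty, namely making the non-regular count comparable to $\mathbb E_p[\mathcal P_S]$ uniformly in arbitrary $S$. Escapability is then handled locally by finite energy with constants depending on $K$. Combining the two steps gives $\mathbb E_p|\mathcal X^K_S|\ge (c(K)-\varepsilon(K))\mathbb E_p[\mathcal P_S]$, so we first choose $K_0$ large and then fix $c_0$.
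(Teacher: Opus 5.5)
\textbf{Step 1 has a genuine gap, and it is the step that carries the proposition.} A Markov/tree-graph bound on $\mathbf 1\{0\connect{S\:}u\}\,|\mathcal C(u)\cap\Lambda_s(u)|^k$ produces diagrams $\sum_y\tau_{S,p}(0,y)\,\tau_{S,p}(y,u)\,(\cdots)$. Here the branch points $y$ range over the whole $S$-path from $0$ to $u$, including points at distance $r\gg s$ from $u$. Nothing in your argument brings these back to $\tau_{S,p}(0,u)$, or, after summing over $u$, to $\mathbb E_p[\mathcal P_S]$. Doing so directly amounts to a reversed BK inequality inside an arbitrary $S$, which is what the paper is building towards. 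Moreover, the inner sums $\sum_{u\in\partial S}\tau_{S,p}(y,u)$ are pioneer counts from other starting points, for which no upper bound is yet available.

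An absolute lower bound $\mathbb E_p[\mathcal P_S]\gtrsim 1$ coming from the definition of $L(p)$ would only help if irregular pioneers were bounded by an absolute constant. That presupposes Theorem~\ref{thm:main}, which is deduced from this very proposition. The definition of $L(p)$ also does not localise pioneers on $\partial S$: for $S'=S\cap\Lambda_r(y)$, the pioneers of $S'$ may all lie on $\partial\Lambda_r(y)$.

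What ``replace $y$ by a pioneer near it'' really requires is the Kozma--Nachmias bound $\tau_{\Lambda_r,p}(a,b)\ge c\,e^{-C(\log r)^2}$ for $r\le 2L(p)$ (Lemma~\ref{lem:KN}). This, not $\varphi_p(S)\gtrsim1$, is where $S\subset\Lambda_{L(p)}$ is used. That conversion costs a superpolynomial factor, while the far-branch diagrams decay only polynomially in $r$, so they cannot pay for it. In addition, $\{0\connect{S\:}u\}$ is a global event that biases $\mathcal C(u)$, so no conditional tail bound is at hand.

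The paper avoids conditioning altogether. It uses the local event $\mathcal T_s^{\mathrm{loc}}(u)$, measurable with respect to $\Lambda_{s^{2d}}(u)$. This event implies $\mathcal T_{s,1}(u)$: the cap of $(\log s)^3$ disjoint arms to $\partial\Lambda_{s^{2d}}(u)$ is what lets a local cluster bound imply the global one. It fails with probability at most $e^{-c(\log s)^4}$, by Aizenman's moment estimates together with BK for the arms. The paper then tiles space by boxes of side $\asymp s^{2d}$, using $2^d$ shifted tilings, and explores the boxes reached in $S$ from $0$. Each explored box contains a pioneer with conditional probability $\ge c\,e^{-C(\log s)^2}$, by Lemma~\ref{lem:KN}. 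Its $\lesssim s^{2d^2}$ points are locally bad with probability $\le e^{-c(\log s)^4}$, independently of the past. Hence $\mathbb E_p[\mathcal P_S^{s\text{-loc-bad}}]\lesssim s^{2d^2}e^{-c(\log s)^4+C(\log s)^2}\,\mathbb E_p[\mathcal P_S]$, which is summable over $s\ge K$; the key is that $(\log s)^4$ beats $(\log s)^2$.

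Your Step 2 fallback, a local surgery with bounded multiplicity, is the paper's argument, but two details matter. First, the surgery produces an escapable pioneer at a nearby $v\in\partial S\cap\Lambda_K(u)$. Its cluster at scale $k$ is controlled only by that of $u$ at scale $2k$ plus $O(dK)$ points, so the input should be $(K,1)$-regularity and the output $(K,20)$-regularity, whereas you target $(K,20)$ in Step 1. Second, the two steps must be combined multiplicatively, as $\mathbb E_p[|\mathcal X^K_S|]\ge c(K)(1-\varepsilon(K))\,\mathbb E_p[\mathcal P_S]$. Your $c(K)-\varepsilon(K)$ would be negative for large $K$, because the finite-energy constant $c(K)$ is exponentially small in $K^d$ while $\varepsilon(K)$ decays far more slowly.
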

 
 We will prove this result using the two following lemmas. 

\begin{Lem}\label{lem: regular are in average also escapable } There exists $K_1>0$ such that the following holds. For every $K\geq K_1$, there exists $c_1=c_1(K,d)>0$ such that, for every $p\in [p_c/2,p_c]$, every $S\subset \mathbb Z^d$ containing $0$,
\begin{equation}
	\mathbb E_p\big[|\mathcal X^K_S|\big]\geq c_1\cdot \mathbb E_p\big[\mathcal{P}_S^{K\textup{-reg}}\big].
\end{equation}
\end{Lem}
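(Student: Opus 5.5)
We compare the pioneers that are $(K,1)$-regular with those that are moreover $(K,20)$-regular and $K$-escapable, using a local modification (finite-energy) argument. The key point is that $(K,1)$-regularity leaves a factor $20$ of room. A bounded number of edges near $u$ can therefore be changed without destroying regularity at the weaker level $T=20$, while the pioneer property $0\connect{S\:}u$ is kept.

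Fix a $(K,1)$-regular pioneer $u$. Choose $v\notin S$ with $v\sim u$; one exists since $u\in\partial S$. We want to force escapability by closing every edge with exactly one endpoint in $\Lambda_{K}(v)\setminus\{u\}$ that does not belong to the path from $0$ to $u$ in $S$. That cannot be done without knowing the cluster, so we proceed as follows. Let $B:=\Lambda_{2K}(u)$. Define a map $\Phi$ on configurations that closes all edges incident to $B\setminus\{u\}$, except one chosen open path inside $S$ connecting $u$ to $\partial B$ along which the connection $0\connect{S\:}u$ enters $B$. Since $K\ll$ the scales involved, we may instead handle this by closing all edges with an endpoint in $\Lambda_K(v)$ other than those incident to $u$ that are used by the connection. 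The simplest version: choose the first edge $e$ through which the open path from $0$ to $u$ enters $\Lambda_{K/100}(u)$. Reroute it inside $S$ by a fixed lattice path $\pi$ from the entry point to $u$. The path $\pi$ is a geodesic of bounded length, hence stays inside $S$ when $S$ is locally a full box. In the general case we keep the original path and only close edges that are off it.

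After the modification, the cluster of $u$ within $\Lambda_K(v)$ is contained in the $1$-neighbourhood of a single path of length $O(K)$ together with $u$. A point $w$ at distance about $K/2$ from $v$, on the side opposite to this path, therefore satisfies $\mathrm d(w,\mathcal C(u))>K/10$. The straight segment from $v$ to $w$ avoids the cluster once $K\ge K_1$, because every edge touching the segment was closed. So $u$ is $K$-escapable. Regularity is preserved in the following sense. For $s\ge K$, the modified cluster satisfies
\[
|\mathcal C(u)\cap\Lambda_s(u)|\le |\mathcal C_{\mathrm{old}}(u)\cap\Lambda_s(u)|+|\Lambda_{K}|\le s^4(\log s)^7+CK^d .
\]
Here we use that closing edges can only shrink clusters. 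The added term comes from the boundedly many edges opened along $\pi$, which may merge at most $|\Lambda_K|$ extra vertices together with clusters touching $\pi$. This last contribution is not bounded deterministically. We handle it by working instead with the modification that only closes edges, which never enlarges $\mathcal C(u)$ and keeps the old path intact. Then $(K,1)$-regularity gives $(K,20)$-regularity for free. The pioneer property persists because the path to $u$ is untouched.

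The map changes at most $N(K,d)$ edges, so $\mathbb P_p[\Phi(E)]\ge (\min(p,1-p))^{N}\,2^{-N}\,\mathbb P_p[E]$ for $p\in[p_c/2,p_c]$. We sum over $u\in\partial S$ to obtain $\mathbb E_p[|\mathcal X^K_S|]\ge c_1\mathbb E_p[\mathcal P^{K\text{-reg}}_S]$.

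The main obstacle is the geometric step: ensuring that one can close edges in $\Lambda_K(v)$ off a path that \emph{realises} $0\connect{S\:}u$ and still find a point $w$ that is far from $\mathcal C(u)$, together with an avoiding path $\gamma$ to it. The difficulty is that the path to $u$ may wander through $\Lambda_K(v)$ densely. I would first try choosing a self-avoiding open path from $0$ to $u$ and keeping only it inside $\Lambda_K(v)$. Such a path has at most $|\Lambda_K(v)|$ vertices there but could still fill the box. To fix this, note that regularity at scale $s=K$ bounds $|\mathcal C(u)\cap\Lambda_K(u)|\le K^4(\log K)^7\ll K^d$, since $d>6$. A pigeonhole argument over $\asymp K$ disjoint radial tubes from $v$ then yields a tube of width $K/10$ that the cluster misses entirely. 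Closing edges adjacent to it is harmless, and it provides $\gamma$ and $w$. With this, the closing step becomes essentially unnecessary except to separate $v$ from the cluster, which costs $O(1)$ edges.
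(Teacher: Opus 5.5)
There is a genuine gap. Your surgery keeps the pioneer fixed at $u$ and leaves intact the open path realising $0\connect{S\:}u$. Summing over $u$, this amounts to proving $\mathbb P_p[u\in\mathcal X^K_S]\gtrsim\mathbb P_p[u \text{ is a } (K,1)\text{-regular pioneer}]$ for each fixed $u$, and that statement is false.

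Here is a counterexample. Take $S$ to be an induced self-avoiding lattice path from $0$ to $u$ which, inside $\Lambda_{2K}(u)$, passes within distance $K/20$ of every point; this needs only $O_d(K)$ vertices there.
\begin{itemize}
\item On $\{0\connect{S\:}u\}$ the whole path lies in $\mathcal C(u)$. Hence $\mathrm d(w,\mathcal C(u))\le K/20$ for every $w\in\Lambda_K(v)$ and every $v\sim u$, so $u$ is never $K$-escapable.
\item Yet with positive probability the path is open and everything around it is closed. Then $u$ is a $(K,1)$-regular pioneer, since $|\mathcal C(u)\cap\Lambda_s(u)|=O_d(K)+O(s)$.
\end{itemize}

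The same example defeats your pigeonhole step. The bound $|\mathcal C(u)\cap\Lambda_K(u)|\le K^4(\log K)^7\ll K^d$ only controls volume. Escapability needs a hole of radius $K/10$, and a cluster with $O(K)$ vertices can already destroy every such hole. Moreover, tubes of width $K/10$ issued from $v$ can be neither $\asymp K$ in number nor disjoint, and all of them meet $u\in\mathcal C(u)$. Closing edges off the path, or $O(1)$ edges around $v$, cannot help.

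The missing idea is to let the pioneer move. This is why the right-hand side of \eqref{eq:proof main lemma 2} sums over $v\in\Lambda_K(u)\cap\partial S$. The paper's construction is as follows.
\begin{itemize}
\item Let $w$ be the last vertex of $\partial\Lambda_{K-1}(u)$ visited by the open path $\gamma$ from $u$ to $0$ in $S$. Let $\gamma_1$ be a geodesic from $u$ to $w$ in $\Lambda_{K-1}(u)$, and let $v$ be the last vertex of $\gamma_1$ in $\partial S$.
\item The piece $\gamma'$ of $\gamma_1$ from $v$ to $w$ then lies in $S$. This resolves your worry that a rerouting path might leave $S$.
\item Open $\gamma'$ and $\gamma_{[w,0]}\cap\Lambda_K(u)$, and close every other edge of $\Lambda_K(u)$. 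Since $\gamma'\subset\Lambda_{K-1}(u)$, no foreign cluster can attach to it, which settles your other worry.
\end{itemize}

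Consequences of this construction:
\begin{itemize}
\item $v$ is a pioneer, and its new cluster is contained in $\mathcal C_\omega(u)\cup\gamma'$.
\item Together with $\Lambda_k(v)\subset\Lambda_{2k}(u)$, this gives $(2k)^4(\log 2k)^7+dK\le 20k^4(\log k)^7$, so $v$ is $(K,20)$-regular. This relocation cost, not edge changes around a fixed $u$, is what the constant $20$ absorbs.
\item Inside $\Lambda_{K-1}(u)$ the new cluster is just the monotone path $\gamma'$, which lies in a half-box, so $v$ is $K$-escapable.
\end{itemize}
The map changes only edges of $\Lambda_K(u)$, so it has bounded multiplicity and bounded energy cost for $p\in[p_c/2,p_c]$. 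Summing over $u$ then loses only a factor $|\Lambda_K|$.
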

\begin{Lem}\label{lem: pioneers are in average regular} There exists $K_2>0$ such that the following holds. For every $K\geq K_2$, every $p\in [p_c/2,p_c]$, every finite $S\subset \Lambda_{L(p)}$ containing $0$,
\begin{equation}
	\mathbb E_p\big[\mathcal{P}_S^{K\textup{-reg}}\big]\geq \frac{1}{2}\cdot \mathbb E_p[\mathcal{P}_S].
\end{equation}
\end{Lem}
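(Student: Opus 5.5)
We would write $\mathbb E_p[\mathcal P_S]-\mathbb E_p[\mathcal P_S^{K\textup{-reg}}]$ as a sum of bad events and show that this sum is at most $\tfrac12\,\mathbb E_p[\mathcal P_S]$ once $K$ is large. Recall that $\mathcal C(u;\mathbb Z^d)=\mathcal C(u)$. A pioneer $u$ fails to be $(K,1)$-regular exactly when $|\mathcal C(u)\cap\Lambda_s(u)|>s^4(\log s)^7$ for some $s\ge K$. Up to a change of constants it suffices to test dyadic scales $s=2^j\ge K$. A union bound then gives
\begin{equation*}
\mathbb E_p[\mathcal P_S]-\mathbb E_p[\mathcal P_S^{K\textup{-reg}}]\le \sum_{u\in\partial S}\sum_{2^j\ge K}\mathbb P_p\Big[0\connect{S\:}u,\ |\mathcal C(u)\cap\Lambda_{2^j}(u)|>2^{4j}j^7\Big].
\end{equation*}
The goal is to bound the $j$-th term by $\delta_j\,\tau_{S,p}(0,u)$, up to an error that sums to something small compared with $\mathbb E_p[\mathcal P_S]$, with $\sum_{2^j\ge K}\delta_j\to0$ as $K\to\infty$.

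To control each term, we would apply Markov's inequality to a $k$-th moment of $N_s(u):=|\mathcal C(u)\cap\Lambda_s(u)|$ on the event $\{0\connect{S\:}u\}$, with $k=k(d)$ fixed and large. A tree-graph (Aizenman--Newman) bound via BK expresses $\mathbb E_p[\mathds 1\{0\connect{S\:}u\}N_s(u)^k]$ as a sum of diagrams: a branch point $y$ with $\{0\connect{S\:}y\}\circ\{y\connect{S\:}u\}\circ\{y\connect{}\text{tree to }z_1,\dots,z_k\}$, with $z_i\in\Lambda_s(u)$, and the tree counted with $\tau_p\le\tau_{p_c}$ and \eqref{eq: 2pt full space estimate}. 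The standard $d>6$ computation bounds the tree part by $C^k k!\,s^{4k}$ when $y$ is close to $u$. For $k\ge2$ the factor $(\log s)^{7k}$ then produces a summable $\delta_j\lesssim j^{-7k}$, and this is where $K_2$ is chosen. Aizenman's estimate \cite{AizenmanNumberIncipient1997} is exactly of this form, and we would follow it.

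The main obstacle is the branch point. The diagrams produce $\sum_y\tau_{S,p}(0,y)\tau_{S,p}(y,u)(\cdots)$ rather than $\tau_{S,p}(0,u)$, and for a rough set $S$ there is no pointwise comparison available. We would handle it by splitting according to $|y-u|$. If $|y-u|\le s$, we keep $\tau_{S,p}(0,y)$ and sum over $u\in\partial S$ first. The term $\sum_{u}\tau_{S,p}(y,u)$ over $u\in\partial S\cap\Lambda_s(y)$ is at most the expected number of pioneers seen from $y$. We then reorganise the sum over $y$ as $\sum_{y\in S}\tau_{S,p}(0,y)\,(\cdot)$ and compare it with $\mathbb E_p[\mathcal P_S]$ through the Simon--Lieb inequality (Lemma~\ref{lem:SL}). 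The comparison uses that, since $S\subset\Lambda_{L(p)}$, every relevant sub-box has $\varphi_p\ge\tfrac12$, so $\mathbb E_p[\mathcal P_S]\gtrsim1$ and local mass near $y$ is dominated by pioneers. If $|y-u|>s$, the branch point lies far away, and the $k$ extra arms into $\Lambda_s(u)$ cost a factor $(s/|y-u|)^{(d-6)}$-type decay, which again sums to a small multiple of $\mathbb E_p[\mathcal P_S]$. We expect this reorganisation, which has to be done without any a priori estimate on $\tau_{S,p}$, to be the delicate step. If it fails, we would instead explore $\mathcal C(0;S)$ up to its boundary and use that the cluster of $u$ outside $S$ is independent of $\mathcal C(0;S)$ given the explored part, bounding only the part inside $S$ by the diagram above.
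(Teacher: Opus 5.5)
There is a genuine gap, and it is exactly the step you flag as delicate: turning the branched diagrams produced by conditioning on $\{0\connect{S\:}u\}$ into a multiple of $\mathbb E_p[\mathcal P_S]$. After BK you get chains $\tau_{S,p}(0,y_1)\tau_{S,p}(y_1,y_2)\cdots\tau_{S,p}(y_m,u)$ with up to $k$ branch points on the $0$--$u$ path, and none of the tools you name closes the comparison. The Simon--Lieb inequality only bounds $\tau_{\Lambda,p}$ \emph{from above} by $\tau_{S,p}$. What you need is a lower-bound statement: points of $\mathcal C(0;S)$ near $\partial S$ come with pioneers nearby.

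The fact that $\varphi_p\ge\tfrac12$ for boxes below scale $L(p)$ gives only the global bound $\mathbb E_p[\mathcal P_S]\gtrsim 1$. This does not control sums $\sum_y\tau_{S,p}(0,y)(\cdots)$, whose natural size is $\mathbb E_p[|\mathcal C(0;S)|]$ (heuristically of order $n^2$ for $S=\Lambda_n$). For far branch points you would need a priori control of the mass of $\mathcal C(0;S)$ at distance $R$ from $\partial S$ relative to pioneers, e.g.\ per-layer bounds as in Claim~\ref{claim2}. But those are derived from Theorem~\ref{thm:main}, which itself rests on this lemma, so the argument would be circular.

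Even for near branch points, the available local comparisons have a price. FKG gives $\tau_{S,p}(0,y)\tau_{S,p}(y,u)\le\tau_{S,p}(0,u)$ but loses a volume factor $s^d$ per branch point. A box lower bound such as Lemma~\ref{lem:KN} loses $e^{C(\log s)^2}$. Markov's inequality with a fixed moment $k$ only gains $(\log s)^{-7k}$, which beats neither. Your fallback does not help either: $\mathcal C(u)\cap\Lambda_s(u)\supset\mathcal C(0;S)\cap\Lambda_s(u)$, and the size of the latter given that $u$ is a pioneer is precisely the quantity at stake.

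The missing idea is to never compute anything conditionally on $\{0\connect{S\:}u\}$. The paper does this in three steps.

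First, it replaces $\mathcal T_{s,1}(u)$ by the local event $\mathcal T_s^{\textup{loc}}(u)$. This event is measurable with respect to $\Lambda_{s^{2d}}(u)$ and fails with probability at most $e^{-c(\log s)^4}$, by Aizenman's exponential tail plus BK for $(\log s)^3$ disjoint crossings (Lemma~\ref{lem:Tsloc}).

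Second, it explores $\mathcal C(0;S)$ box by box, for $2^d$ shifted partitions into boxes of side of order $s^{2d}$. Whether a point in the interior of a box is locally bad is independent of whether that box is reached. Hence the expected number of $s$-locally bad pioneers is at most $Cs^{2d^2}e^{-c(\log s)^4}$ times the expected number of reached boxes.

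Third, Lemma~\ref{lem:KN} shows that each reached box contains a pioneer with conditional probability at least $ce^{-C(\log s)^2}$; this is where $S\subset\Lambda_{L(p)}$ is used. The reason is that any open path inside the box from an entry point to a point of $\partial S$ leaves $S$ at a pioneer.

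Since $e^{-c(\log s)^4}$ beats $s^{2d^2}e^{C(\log s)^2}$, summing over $s\ge K$ gives the factor $\tfrac12$. A moment-based route could only work with super-polynomial tails (moments of order growing with $s$), plus such a localisation and a Lemma~\ref{lem:KN}-type lower bound. At that point you would essentially have rebuilt the paper's argument.
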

\begin{Rem}\label{rem: pioneers are regular in average can be made more general}Lemma \ref{lem: pioneers are in average regular} also gives the following result: for every $K\geq K_2$, every $p\in [p_c/2,p_c]$, every finite $S\subset \Lambda_{L(p)}$ containing $0$, and every $\Lambda\supset S$,
\begin{equation}
	\mathbb E_p\big[\mathcal{P}_{S,\Lambda}^{K\textup{-reg}}\big]\geq \frac{1}{2}\cdot \mathbb E_p[\mathcal{P}_S].
\end{equation}
Indeed, this follows from the fact that $\mathcal{P}_{S,\Lambda}^{K\textup{-reg}}\geq \mathcal{P}_{S}^{K\textup{-reg}}$ (see (2) in Remark \ref{rem: regular points}).
\end{Rem}
\begin{proof}[Proof of Proposition~\textup{\ref{prop: in average pioneers are regular and escapable}}] Fix $p,S$ as in the statement. Let $K_0:=K_1\vee K_2$ where $K_1$ and $K_2$ are respectively given by Lemmas \ref{lem: regular are in average also escapable } and \ref{lem: pioneers are in average regular}. Fix $K\geq K_0$. Let also $c_1=c_1(K,d)$ be given by Lemma \ref{lem: regular are in average also escapable }. Combining these two lemmas,
\begin{equation}
	\mathbb E_p\big[|\mathcal X^K_S|\big]\geq c_1\cdot \mathbb E_p\big[\mathcal{P}_S^{K\textup{-reg}}\big]\geq \frac{c_1}{2}\cdot \mathbb E_p[\mathcal{P}_S].
\end{equation}
This concludes the proof by setting $c_0:=c_1/2$.
 \end{proof}
\subsubsection{Proof of Lemma \ref{lem: regular are in average also escapable }}
We turn to the proof of Lemma \ref{lem: regular are in average also escapable }. The idea is to perform a local surgery that turns a $(K,1)$-regular pioneer into a $K$-escapable and $(K,20)$-regular pioneer.
\begin{proof}[Proof of Lemma \textup{\ref{lem: regular are in average also escapable }}] Let $p\in [p_c/2,p_c]$. Write 
\begin{equation}
	\mathbb E_p\big[|\mathcal X^K_S|\big]=\sum_{u\in \partial S}\mathbb P_p[\{0\connect{S\:}u\}\cap \{u\text{ is $(K,20)$-regular}\}\cap  \{u \text{ is $K$-escapable}\}].
\end{equation}
Let $u\in \partial S$. We claim, that if $K$ is large enough, there exists $C_1=C_1(K,d)>0$ such that
\begin{multline}\label{eq:proof main lemma 2}
	\mathbb P_p[\{0\connect{S\:}u\}\cap \{u\text{ is $(K,1)$-regular}\}]\\\leq C_1\sum_{v\in \Lambda_K(u)\cap \partial S}\mathbb P_p[\{0\connect{S\:}v\}\cap \{v\text{ is $(K,20)$-regular}\}\cap  \{v \text{ is $K$-escapable}\}],
\end{multline}
from which Lemma \ref{lem: regular are in average also escapable } follows readily. 

We turn to the proof of \eqref{eq:proof main lemma 2}. Let $\mathcal A:=\{0\connect{S\:}u\}\cap\{u \textup{ is $(K,1)$-regular}\}$ and $\mathcal B:=\bigcup_{v\in \Lambda_K(u)\cap \partial S}\{0\connect{S\:}v\}\cap\{v\textup{ is $(K,20)$-regular}\}\cap \{v\textup{ is $K$-escapable}\}$. We construct a map $\varphi:\mathcal A\rightarrow \mathcal B$ such that $\omega$ and $\varphi(\omega)$ only differ in $\Lambda_K(u)$, see Figure \ref{fig:localmod} for an illustration. For simplicity, we only describe the construction for the nearest-neighbour case, but it extends straightforwardly to the spread-out setting.

\begin{figure}[htb]
	\begin{center}
	\includegraphics{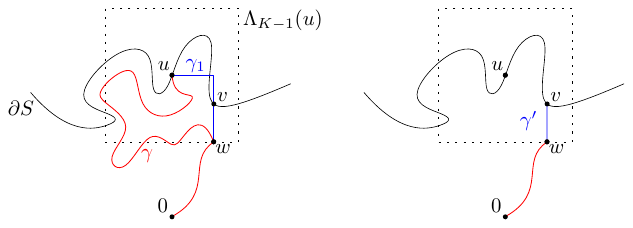}
	\caption{An illustration of the map $\varphi$. On the left: a depiction of $\gamma,\gamma_1,w,v$, which can be constructed from a configuration $\omega\in \mathcal A$. On the right: a depiction of $\varphi(\omega)$, which is obtained from $\omega$ by (essentially) opening the edges of $\gamma'$ and closing all the other edges in $\Lambda_{K-1}(u)$. It is clear from construction that $v$ is always $K$-escapable in $\varphi(\omega)$.}
	\label{fig:localmod}
	\end{center}
\end{figure}

Let $\omega\in \mathcal A$. Let $\gamma$ be any self-avoiding path from $u$ to $0$, that lies in $S$, and that is open in $\omega$. Denote by $w$ the last vertex of $\partial \Lambda_{K-1}(u)$ visited by $\gamma$ before reaching $0$. Denote by $\gamma_{[w,0]}$ the portion of $\gamma$ between $w$ and $0$. Since $w\in \partial \Lambda_{K-1}(u)$, we can assume (as in Figure \ref{fig:localmod}), without loss of generality, that $w_2=u_2-(K-1)$. 
Consider a geodesic path $\gamma_1:u\rightarrow w$ in $\Lambda_{K-1}(u)$. Let $v$ be the last point along $\gamma_1$ that belongs to $\partial S$, and denote by $\gamma'$ the portion of $\gamma_1$ between $v$ and $w$. We make three important observations: $(i)$ $\gamma'$ is included in $S$; $(ii)$ since $w_2\leq u_2-(K-1)$, $\gamma'$ is entirely contained in $\{z: z_2\leq u_2\}$; $(iii$) $|\gamma'|$, the number of edges visited by $\gamma'$, satisfies $|\gamma'|\leq |\gamma_1|=\Vert w-u\Vert_1\leq d\cdot (K-1)$.

The configuration $\varphi(\omega)$ is constructed as follows:
\begin{enumerate}
	\item[$\bullet$] $\varphi(\omega)$ coincides with $\omega$ outside of $\Lambda_K(u)$.
	\item[$\bullet$] In $\Lambda_K(u)$, we open the edges of $\gamma'$ and $\gamma_{[w,0]}\cap \Lambda_K(u)$ and close all the other edges.
	 \end{enumerate}
We claim that $\varphi(\omega)\in \{0\connect{S\:}v\}\cap\{v\textup{ is $(K,20)$-regular}\}\cap \{v\textup{ is $K$-escapable}\} \subset \mathcal B$. Indeed, observe first that $\{0\connect{S\:}v\}$ occurs since the concatenation of $\gamma'$ and $\gamma_{[w,0]}$ is an open path from $v$ to $0$ in $S$. Then, observe that $v$ is $(K,20)$-regular for $K$ large enough. Indeed, denoting by $\mathcal C_{\varphi(\omega)}(v)$ (resp. $\mathcal C_\omega(u)$) the cluster of $v$ (resp. $u$) in $\varphi(\omega)$ (resp. $\omega$), we have by construction that 
 for every $k\geq K$,
\begin{equation}\label{eq:comparing cluster omega to cluster phi(omega)}
	|\mathcal C_{\varphi(\omega)}(v)\cap \Lambda_k(v)|\leq |\mathcal C_{\omega}(u)\cap \Lambda_{2k}(u)|+|\gamma'|\leq (2k)^4(\log 2k)^7+dK\leq 20k^4(\log k)^7,
\end{equation}
where the last inequality holds if $K$ is large enough. Finally, it is easy to see, from the construction, that $v$ is $K$-escapable in $\varphi(\omega)$. Now that $\varphi$ is defined, we write
\begin{align}\notag
	\mathbb P_p[\mathcal A]=\sum_{\omega\in \mathcal A}\mathbb P_p[\omega]\leq C_2(K,d)\sum_{\omega\in \mathcal A}\mathbb P_p[\varphi(\omega)]
	&= C_2(K,d)\sum_{\omega'\in \mathcal B}|\{\omega: \varphi(\omega)=\omega'\}|\mathbb P_p[\omega']
	\\&\leq C_3(K,d)\sum_{\omega'\in \mathcal B} \mathbb P_p[\omega']
	=C_3(K,d)\mathbb P_p[\mathcal B],\label{eq:proof main lemma 3}
\end{align}
where we used that $\omega$ and $\varphi(\omega)$ only differ in $\Lambda_K(u)$ and the fact that $p\in [p_c/2,p_c]$ in the first and last inequalities. The proof of \eqref{eq:proof main lemma 2} follows from \eqref{eq:proof main lemma 3} after a union bound.
\end{proof}
\begin{Rem}\label{rem:comparing mathcal X to pioneers} The construction above is robust and allows for additional requirements to 
be imposed in the definition of $K$-escapability. For instance, one may require 
certain coordinate restrictions on $\mathbf{w}$, which allows to consider a restricted notion of $K$-escapability and extend Lemma \ref{lem: regular are in average also escapable } to some $\mathcal{X}^K_{S,\Lambda}$ with $\Lambda \neq 
\mathbb{Z}^d$. This will be the case in Section~\ref{sec:sharp length} where we analyse $\mathcal{X}^K_{\mathbb{H}_{j}, \mathbb{H}_{j+1}}$ for $j \geq 0$, and require the triplet $(\mathbf{v}, \mathbf{w}, 
\boldsymbol{\gamma})$ given by the $K$-escapability in $\mathbb H_{j+1}$ to satisfy $\mathbf{v}_1 = \mathbf{w}_1 = -(j+1)$ and 
$\boldsymbol{\gamma} \subset \mathbb{H}_{j+1}$. Additionally, we will use this in Section \ref{sec:nc lb} to impose that if $u\in \mathcal X^K_{\Lambda_k}$ is such that $u_i=\pm k$ (in the nearest-neighbour case), then its associated $\mathbf{w}$ satisfies $\mathbf{w}=u\pm\lfloor\tfrac{K}{2}\rfloor \mathbf{e}_i$. \end{Rem}

\subsubsection{Proof of Lemma \ref{lem: pioneers are in average regular}}
To prove Lemma \ref{lem: pioneers are in average regular}, we adapt the argument of \cite[Section~4.4]{panis2025sharp}. We first introduce a \emph{local} regularity condition. To be more precise, for every $s>0$, and every $u\in \mathbb Z^d$, introduce the event \begin{align}\label{eq:def ts loc}
\begin{split}
\mathcal T_{s}^{\textrm{loc}}(u) :=& \Big\{|\mathcal C\big(y; \Lambda_{s^{2d}}(u)\big)\cap \Lambda_s(u)|\leq s^4 (\log s)^4,\ \forall y \in \Lambda_s(u)\Big\} \\ 
				&\cap \Big\{ \exists  \text{ at most }  (\log s)^3 \text{ disjoint open paths from }\Lambda_s(u) \text{ to } \partial \Lambda_{s^{2d}}(u)\text{ in }\Lambda_{s^{2d}}(u)\Big\}.	
				\end{split}		
\end{align}
The interest of this event, compared to $\mathcal T_{s,1}(u)$, is that it only depends on the configuration of the percolation inside the box $\Lambda_{s^{2d}}(u)$, and is thus a \emph{local} event, whereas to determine whether $\mathcal T_{s,1}(z)$ holds or not, one needs to reveal the configuration in the whole $\mathbb Z^d$.  The following simple fact holds by construction (see Claim 4.1 in~\cite{KozmaNachmias} or Claim 5.4 in~\cite{Asselah2025capacity}). 
\begin{Lem}\label{lem:Tsloc.Ts}
One has, for every $s>0$ and every $u\in \mathbb Z^d$, \begin{equation}
\mathcal T_{s}^{\textup{loc}}(u) \subseteq \mathcal T_{s,1}(u).
\end{equation} 
\end{Lem}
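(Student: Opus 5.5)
We prove the inclusion $\mathcal T_{s}^{\textup{loc}}(u)\subseteq \mathcal T_{s,1}(u)$ by a purely deterministic argument on a fixed configuration $\omega\in\mathcal T_s^{\textup{loc}}(u)$. The goal is to bound $|\mathcal C(u)\cap\Lambda_s(u)|$ by $s^4(\log s)^7$, and the idea is to decompose the global cluster of $u$, restricted to the small box, into clusters computed inside the big box $B:=\Lambda_{s^{2d}}(u)$.

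The first observation is that every point of $\mathcal C(u)\cap\Lambda_s(u)$ lies in $\mathcal C(y;B)$ for some $y\in\Lambda_s(u)$, namely $y$ itself. So it suffices to control how many \emph{distinct} local clusters $\mathcal C(y;B)$, with $y\in\Lambda_s(u)$, can be pieces of the single global cluster $\mathcal C(u)$.

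\begin{enumerate}
\item[$\bullet$] \textbf{Case 1: $\mathcal C(u)$ does not reach $\partial B$.} Then $\mathcal C(u)=\mathcal C(u;B)$. The first condition of $\mathcal T_s^{\textup{loc}}(u)$, applied with $y=u$, gives
\begin{equation*}
|\mathcal C(u)\cap\Lambda_s(u)|\le s^4(\log s)^4.
\end{equation*}
\item[$\bullet$] \textbf{Case 2: $\mathcal C(u)$ reaches $\partial B$.} Let $\mathcal C_1,\dots,\mathcal C_m$ be the distinct local clusters $\mathcal C(y;B)$, $y\in\Lambda_s(u)$, that are contained in $\mathcal C(u)$. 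Two distinct pieces $\mathcal C(y;B)\ne\mathcal C(y';B)$ can only be joined inside $\mathcal C(u)$ by paths that exit $B$. Hence, whenever $m\ge2$, each $\mathcal C_i$ must connect $\Lambda_s(u)$ to $\partial B$ inside $B$. Picking one such path in each $\mathcal C_i$ produces $m$ disjoint open paths from $\Lambda_s(u)$ to $\partial B$ in $B$, since distinct local clusters are vertex-disjoint. The second condition of $\mathcal T_s^{\textup{loc}}(u)$ then gives $m\le(\log s)^3$.
\end{enumerate}

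In Case 2, $m\le 1\vee(\log s)^3$, and each piece meets $\Lambda_s(u)$ in at most $s^4(\log s)^4$ points by the first condition. Summing over the pieces,
\begin{equation*}
|\mathcal C(u)\cap \Lambda_s(u)|\le \big(1\vee(\log s)^3\big)\, s^4(\log s)^4\le s^4(\log s)^7
\end{equation*}
for $s$ large, and the same bound holds in Case 1.

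The only delicate step is Case 2: one must check that a piece not touching $\partial B$ is automatically all of $\mathcal C(u)$. This is immediate, since such a piece is closed under open edges within $B$ and has no vertex on $\partial B$ from which it could continue outward, so it is a full cluster. For small $s$, where $\log s<1$ or is negative, one adjusts the constants or notes that the statement is only used for $s\ge K$ large, as in \cite[Claim~4.1]{KozmaNachmias} and \cite[Claim~5.4]{Asselah2025capacity}.
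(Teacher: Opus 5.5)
Your argument is correct and is exactly the standard one behind the claims the paper cites in place of a proof (\cite[Claim~4.1]{KozmaNachmias}, \cite[Claim~5.4]{Asselah2025capacity}). You split $\mathcal C(u)\cap\Lambda_s(u)$ into clusters of $\Lambda_{s^{2d}}(u)$, note that if there are at least two pieces then each must reach $\partial\Lambda_{s^{2d}}(u)$, so they give vertex-disjoint open paths and there are at most $(\log s)^3$ pieces, each contributing at most $s^4(\log s)^4$. Your caveat about small $s$ is a genuine issue, not a cosmetic one: at $s=2.5$ one has $(\log s)^3<1$, and opening only a $25$-vertex path inside $\Lambda_2(u)$ through $u$ makes $\mathcal T_s^{\textup{loc}}(u)$ hold while $\mathcal T_{s,1}(u)$ fails. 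So the inclusion should be read for $s\ge e$, which is harmless since it is only used for $s\ge K$.
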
 
The event $\mathcal T_s^{\textup{loc}}(u)$ is a priori less likely than $\mathcal T_{s,1}(u)$. However, the following result shows that it is still extremely likely. 
\begin{Lem}\label{lem:Tsloc} Let $d>6$ and assume that \eqref{eq: 2pt full space estimate} holds.
There exists $c>0$ such that, for every $p\leq p_c$, every $s>0$, and every $u\in \mathbb Z^d$, \begin{equation}
\mathbb P_p[\mathcal T_s^{\textup{loc}}(u)] \ge 1 - \exp\big(-c(\log s)^4\big).
\end{equation}
\end{Lem}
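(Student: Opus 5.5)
We bound the complement of each of the two events defining $\mathcal T_s^{\textup{loc}}(u)$ separately and conclude with a union bound. By monotonicity, all estimates may be taken at $p=p_c$. The events involved are increasing: large cluster sizes and many disjoint crossings are both monotone in the configuration. Throughout we use \eqref{eq: 2pt full space estimate}, together with the classical consequences of the tree-graph inequalities of Aizenman--Newman \cite{AizenmanNewmanTreeGraphInequalities1984}.

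\emph{Step 1: cluster sizes.} Fix $y\in\Lambda_s(u)$ and set $X_y:=|\mathcal C(y;\Lambda_{s^{2d}}(u))\cap\Lambda_s(u)|\le |\mathcal C(y)\cap\Lambda_{2s}(y)|$. The tree-graph inequalities and \eqref{eq: 2pt full space estimate} give the moment bound
\begin{equation}
\mathbb E_{p_c}\big[|\mathcal C(y)\cap \Lambda_{2s}(y)|^k\big]\le C^k k!\, s^{4k-2},
\end{equation}
since each additional vertex costs a factor $\sum_{z\in\Lambda_{2s}}\tau(\cdot,z)\tau(z,\cdot)\lesssim s^4$ (summing $|z|^{-2(d-2)}\cdot |z|^{d}$ up to scale $s$ produces $s^{4}$ once $d>6$), and the number of tree shapes is at most $C^kk!$. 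Markov's inequality with $k\asymp (\log s)^4$ yields
\begin{equation}
\mathbb P[X_y> s^4(\log s)^4]\le \frac{C^kk!}{(\log s)^{4k}}\le \exp\big(-c(\log s)^4\big),
\end{equation}
because $k!\le k^k$ while $(\log s)^{4k}$ beats $(Ck)^k$ when $k=\epsilon(\log s)^4$. A union bound over the $O(s^d)$ points $y$ costs only a polynomial factor, which is absorbed into the exponential.

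\emph{Step 2: disjoint crossings.} By the BK inequality, applied iteratively,
\begin{equation}
\mathbb P[\exists\, m \text{ disjoint crossings from }\Lambda_s(u)\text{ to }\partial\Lambda_{s^{2d}}(u)]\le \Big(\sum_{z\in\partial\Lambda_s(u)}\theta_{s^{2d}-s}(z)\Big)^m.
\end{equation}
This is not yet usable because we do not know the one-arm probability; that quantity is exactly what the paper later computes, so it cannot be assumed here. Instead we use Aizenman's a priori bound \cite{AizenmanNumberIncipient1997} $\theta_n(p_c)\lesssim n^{-1}$, which follows from \eqref{eq: 2pt full space estimate}, or even a crude polynomial decay $n^{-\delta}$ with any fixed $\delta>0$. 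Then $\sum_z \theta \lesssim s^{d-1}\cdot s^{-2d\delta'}$. This is where the exponent $2d$ in the size of the outer box matters: we need $s^{d-1}\theta_{s^{2d}}\le s^{-1}$. With $\theta_n\lesssim n^{-1}$ we get $s^{d-1}s^{-2d}=s^{-d-1}<1$. Taking $m=(\log s)^3$ gives a bound $s^{-(\log s)^3}=\exp(-(\log s)^4)$.

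\emph{Main obstacle.} The difficulty is the a priori one-arm input. The bound $\theta_n\lesssim n^{-1}$ follows from the two-point estimate, as in Aizenman's argument or via the tree-graph bound $\chi$-type moments $\mathbb E|\mathcal C\cap\Lambda_n|\asymp n^2$ together with a second-moment argument, or from Kozma--Nachmias-free results. Whichever route is used, any polynomial rate suffices, provided the outer box size $s^{2d}$ is chosen large enough to compensate for the $s^{d-1}$ entropy factor. Combining Steps 1 and 2 gives the claimed bound for $s$ large. For small $s$, the statement is trivial after adjusting $c$.
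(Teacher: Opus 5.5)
Your structure is the paper's. You reduce to $p=p_c$ by monotonicity. You control the cluster-size event by Aizenman's moment bound $\mathbb E_{p_c}[|\mathcal C(y)\cap\Lambda_{2s}(y)|^k]\le C^kk!\,s^{4k-2}$, then Markov with $k\asymp(\log s)^4$, then a union bound. Your heuristic for the per-vertex factor $s^4$ is off: it is really $s^2$ from summing the new leaf over the box times $s^2$ from summing the new branch point. The bound itself is the standard one the paper imports from \cite{AizenmanNumberIncipient1997}. You control the crossing event by a union bound over $\partial\Lambda_s(u)$ followed by iterated BK.

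The problem is the one-arm input you feed into Step 2. A second-moment argument only yields \emph{lower} bounds on $\theta_n(p_c)$. Moment bounds on $|\mathcal C\cap\Lambda_n|$ control large volumes, not the probability of reaching distance $n$. So neither route you list proves $\theta_n(p_c)\lesssim n^{-1}$. That bound is in fact true under \eqref{eq: 2pt full space estimate}, via the Kozma--Nachmias intrinsic one-arm estimate $\mathbb P_{p_c}[\partial B_{\mathrm{int}}(0,r)\neq\emptyset]\lesssim r^{-1}$ from their Alexander--Orbach paper. But that is a far heavier input than this lemma needs.

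Your fallback claim that any polynomial rate $n^{-\delta}$ suffices is false. The outer box is fixed at $\Lambda_{s^{2d}}(u)$ by the definition of $\mathcal T^{\textup{loc}}_s(u)$; you cannot enlarge it. The union bound then produces $s^{d-1}\cdot s^{-2d\delta}$, which decays only when $\delta>(d-1)/(2d)$.

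The paper closes this gap with an elementary inclusion, $\{z\connect{}\partial\Lambda_{s^{2d}}(u)\}\subset\{|\mathcal C(z)|\ge s^{2d}-s\}$. It combines this with the Barsky--Aizenman volume tail $\mathbb P_{p_c}[|\mathcal C(z)|\ge n]\lesssim n^{-1/2}$ \cite{BarskyAizenmanCriticalExponentPercoUnderTriangle1991}, which holds under the triangle condition and hence under \eqref{eq: 2pt full space estimate} for $d>6$. This gives $\lesssim s^{-d}$ per point of $\partial\Lambda_s(u)$, hence $\le C_1/s$ after the union bound over $\asymp s^{d-1}$ points. BK then gives $(C_1/s)^{(\log s)^3}\le\exp(-c(\log s)^4)$. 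The rate $n^{-1/2}$ is exactly what the box size $s^{2d}$ is designed to absorb, since $s^{d-1}\cdot(s^{2d})^{-1/2}=s^{-1}$. Substituting this input for your $n^{-1}$ claim makes Step 2, and hence the whole argument, correct.
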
 
\begin{proof}
For completeness we reproduce the short proof of~\cite[Lemma 4.3]{KozmaNachmias}. First of all, since $\mathcal T^{\textup{loc}}_s(u)$ is a decreasing event, it suffices to prove the result for $p=p_c$, which we assume now. The moment estimates of~\cite[Equation~(4.12)]{AizenmanNumberIncipient1997}, gives the existence of 
a constant $c_1=c_1(d)>0$ such that, for every $s\geq 1$,
\begin{equation}\label{lem KN1}
\mathbb P_{p_c}\big[\exists y\in \Lambda_s(u): |\mathcal C\big(y; \Lambda_{s^{2d}}(u)\big)\cap \Lambda_s(u)|> s^4 (\log s)^4]\le 
\exp\big(-c_1(\log s)^4\big). 
\end{equation} 
On the other hand, using the volume estimate of \cite{BarskyAizenmanCriticalExponentPercoUnderTriangle1991} (see also \cite[Theorem~A.1]{vEGPSperco} for a shorter proof), for any $z\in \partial \Lambda_s(u)$, 
\begin{equation}
\mathbb P_{p_c}\big[z\connect{} \partial \Lambda_{s^{2d}}(u)\big]\le   \mathbb P\big[|\mathcal C(z)|\ge s^{2d}-s\big]
\lesssim \frac{1}{s^d}.
\end{equation}
A union bound therefore shows that for some constant $C_1=C_1(d)>0$, 
\begin{equation}
\mathbb P_{p_c}[\partial\Lambda_s(u) \connect{}  \partial \Lambda_{s^{2d}}(u)] \le 
\sum_{z\in \partial \Lambda_s(u)} \mathbb P_{p_c}[z\connect{}  \partial \Lambda_{s^{2d}}(u)]
 \le \frac {C_1}{s}.  
\end{equation}
Then, the BK inequality \eqref{eq:BK ineq} gives 
\begin{align}\label{lem KN2}
 \nonumber \mathbb P_{p_c}\big[\exists \text{ at least } (\log s)^3 \text{ disjoint open } &\textup{paths from }\partial\Lambda_s(u) \text{ to }  \partial \Lambda_{s^{2d}}(u)\big]\\&\le  (C_1/s)^{(\log s)^3}  
  \le \exp(-c_2(\log s)^4),  
\end{align}
where $c_2=c_2(d)>0$. The result follows by combining~\eqref{lem KN1} and~\eqref{lem KN2}. 
\end{proof}
The last input we need is an adaptation of~\cite[Lemma~1.1]{KozmaNachmias}. Note that it holds in every $d\geq 2$ and does not require \eqref{eq: 2pt full space estimate} to hold.

\begin{Lem}\label{lem:KN} There exist $c,C>0$ such that, for every $p\leq p_c$, every $s\leq 2L(p)$ and every $a,b\in \Lambda_s$,
\begin{equation} 
\tau_{\Lambda_s,p}(a,b)\ge c\exp\big(-C(\log s)^2\big).
\end{equation}
\end{Lem}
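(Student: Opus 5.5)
The plan is to follow the multiscale argument of \cite[Lemma~1.1]{KozmaNachmias}, replacing the criticality input $\varphi_{p_c}(S)\geq 1$ by the defining property of the sharp length. By definition of $L(p)$ and translation invariance, for every $r<L(p)$, every $z$ and every $S\subset\Lambda_r(z)$ containing $z$, one has $\sum_{u\in S,\,v\notin S,\,u\sim v}\tau_{S,p}(z,u)\,p\geq \tfrac12$. Taking $S=\Lambda_r(z)$, the boundary $\partial\Lambda_r(z)$ has $O(r^{d-1})$ points and each has $O(1)$ neighbours outside. Hence some $u\in\partial\Lambda_r(z)$ satisfies $\tau_{\Lambda_r(z),p}(z,u)\geq c\,r^{-(d-1)}$, using $p\geq p_c/2$; for $p<p_c/2$ everything is handled by monotonicity in the end or treated separately at bounded scales. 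Since $\Lambda_r(z)$ is invariant under the reflections and coordinate permutations fixing $z$, the same lower bound holds for every image of $u$ under this group. In particular, in every face of $\Lambda_r(z)$ and every orthant direction there is a point reached with probability $\geq c r^{1-d}$.

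Next I set up a recursion. Let $g(s):=\min_{a,b\in\Lambda_s}\tau_{\Lambda_s,p}(a,b)$ for $s\leq 2L(p)$; the goal is $g(s)\geq s^{-C}g(\lceil s/2\rceil)$, up to harmless constants in the scales. Given $a,b\in\Lambda_s$, I pick a scale $r\asymp s$ with $r<L(p)$, which is possible since $s\leq 2L(p)$ and is the reason for the factor $2$. I also pick a symmetry image $u$ as above so that $\Lambda_r(a)\subset\Lambda_s$ after possibly shifting towards the centre, and so that the reached point $u\in\partial\Lambda_r(a)$ lies in the face and orthant pointing from $a$ towards $b$. This gives $\|u-b\|_\infty\leq s/2$ and some box $\Lambda_{s/2}(m)\subset\Lambda_s$ containing both $u$ and $b$. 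By FKG,
\begin{equation}
\tau_{\Lambda_s,p}(a,b)\geq \tau_{\Lambda_r(a),p}(a,u)\,\tau_{\Lambda_{s/2}(m),p}(u,b)\geq c\,s^{1-d}\,g(s/2).
\end{equation}
If $a$ is close to $\partial\Lambda_s$, I first move it inward using $O(1)$ applications of the same estimate. Since there are only $O(1)$ steps, this only changes the power $C$.

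Iterating the recursion over the $\log_2 s$ dyadic scales down to a fixed scale $s_0$ gives $g(s)\geq g(s_0)\prod_{j}(s2^{-j})^{-C}\geq c\exp(-C(\log s)^2)$. At scale $s_0$ a direct open path gives $g(s_0)\geq (p_c/2)^{C(s_0)}$; for general $p\leq p_c$ one first reduces to $p\geq p_c/2$, since $L(p)$ is bounded for $p\leq p_c/2$ and the statement is then trivial at bounded scales.

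The main obstacle is the geometric step: guaranteeing that the box $\Lambda_r(a)$ fits in $\Lambda_s$ with $r\asymp s$, and that a symmetry image of the good boundary point can be chosen within distance $s/2$ of $b$, with both $u$ and $b$ in a common half-size box inside $\Lambda_s$. The essential point is to recurse linearly, using only one factor of $g(s/2)$ per scale: a quadratic recursion would yield only $\exp(-s^{C})$. If direct targeting fails for $a,b$ near opposite corners, I will first route both $a$ and $b$ towards the centre of $\Lambda_s$ through a bounded number of such boundary jumps at scale $\asymp s$. Each jump costs $s^{-C}$, and only one recursive factor is kept at the end.
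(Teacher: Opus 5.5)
Your strategy is the paper's: its proof just invokes \cite[Lemma~1.1]{KozmaNachmias}, noting that the only inputs are lattice symmetry and the bound $\varphi_p(\Lambda_\ell)\ge\frac12$ for $\ell<L(p)$. Your first paragraph is correct: it gives a boundary point reached with probability $\ge c\,r^{1-d}$, and the same bound for its whole orbit under the symmetries of $\Lambda_r(z)$ fixing $z$.

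The gap is the multiscale step. The recursion $g(s)\ge s^{-C}g(\lceil s/2\rceil)$, with $g$ the minimum over \emph{all} pairs of $\Lambda_s$, cannot be closed with $O(1)$ jumps because of endpoints near $\partial\Lambda_s$.

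\begin{itemize}
\item If $\mathrm{d}(a,\partial\Lambda_s)=\delta\ll s$, boxes centred at $a$ inside $\Lambda_s$ have radius at most $\delta$. The good point of $\Lambda_r(a)$ is within distance $r$ of $a$, so each jump at most doubles the distance to $\partial\Lambda_s$. Reaching the bulk therefore takes $\asymp\log_2(s/\delta)$ jumps, not $O(1)$.
\item Shifting the box does not help. For a non-centred $S\ni a$ you only know $\varphi_p(S)\ge\frac12$, with no symmetry to locate that mass. It may sit entirely on $\partial S\cap\partial\Lambda_s$, trivially so if $a\in\partial\Lambda_s$.
\item The problem recurs at every level. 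If $b$ is a corner of $\Lambda_s$, every box inside $\Lambda_s$ containing $b$ has $b$ as a corner. If $a,b$ are opposite corners, no smaller sub-box contains both, so one recursive call absorbs at most one corner and the other must be reached by $\asymp\log s$ explicit jumps. Patching your scheme level by level thus only gives $\exp(-C(\log s)^3)$.
\item A minor point: one jump does not give $\|u-b\|_\infty\le s/2$, since the tangential coordinates of $u-a$ may all vanish. You need up to $d$ jumps per halving, which is harmless.
\end{itemize}

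A working fix replaces the recursion by a single FKG chain of $O(d\log s)$ jumps. Each jump lies in a box $\Lambda_r(z)\subset\Lambda_s$ centred at the current point with $1\le r<L(p)$, so each costs at least $c\,s^{1-d}$.
\begin{itemize}
\item From $a$, after $O(d)$ single edges, jump with $r=\min_i(s-|z_i|)$. Take the face along a minimising coordinate and the orthant signs towards the centre. The minimal distance to $\partial\Lambda_s$ at least doubles every $\le d$ jumps, so you reach $\Lambda_{3s/4}$ after $O(d\log s)$ jumps. Do the same from $b$.
\item Join the two bulk points with jumps of radius $\min(\lfloor D/2\rfloor,s/8)$, where $D$ is the current $\ell^\infty$ distance. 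Take the face along the largest coordinate of the displacement and the signs along the displacement. Each coordinate overshoots by at most the radius, so all boxes stay in $\Lambda_s$, and $D$ halves every $O(d)$ jumps.
\end{itemize}
This yields $(c\,s^{1-d})^{C\log s}=\exp(-C'(\log s)^2)$. Alternatively, a linear recursion works if $g$ is restricted to pairs with one point in $\Lambda_{s/2}$, and the boundary cost is paid once at the top.

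Your reduction for $p<p_c/2$ is also wrong. Monotonicity in $p$ goes the wrong way for lower bounds, and at bounded scales you only get $p^{C}$. In fact, for $p$ small (e.g.\ $2dp<\frac12$ in the nearest-neighbour case) one has $L(p)=1$, while $\tau_{\Lambda_2,p}(a,b)\to0$ as $p\to0$ for opposite corners. So the lemma can only hold for $p$ bounded away from $0$, e.g.\ $p\in[p_c/2,p_c]$, which is the range in which the paper uses it.
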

\begin{proof} The result at $p=p_c$ is exactly \cite[Lemma~1.1]{KozmaNachmias}. It is easy to extend it to $p<p_c$ and $s<L(p)$. Indeed, the only two inputs in their proof are: $(1)$ invariance under symmetries of the lattice, $(2)$ a uniform lower bound on $\varphi_p(\Lambda_\ell)$ for $\ell\leq s$. The former input is true regardless of the value of $p$, and for the latter one, we have by definition that $\varphi_p(\Lambda_\ell)\geq \tfrac{1}{2}$ for every $\ell\leq s$ since $s<L(p)$. The extension of the result to every $s\leq 2L(p)$ follows from the FKG inequality.
\end{proof} 


We are now equipped to prove Lemma \ref{lem: pioneers are in average regular}. 

\begin{proof}[Proof of Lemma~\textup{\ref{lem: pioneers are in average regular}}] 
 Let $K\ge 1$ to be chosen large enough. Fix $p\in[p_c/2,p_c]$ and $S\subset \Lambda_{L(p)}$ finite and containing $0$. We say that a point $u\in \partial S$ is $s$-locally bad if the event $\mathcal T_s^{\textup{loc}}(u)$ does not hold. We denote by $\mathcal{P}_S^{s\textup{-loc-bad}}$ the number of points on $\partial S$ which are 
$s$-locally bad and connected to $0$ in $S$. 
Due to Lemma~\ref{lem:Tsloc.Ts}, one has  
\begin{equation}\label{eq:Xns}
\mathbb E_p\big[\mathcal{P}_S^{K\textrm{-reg}}\big] \ge \mathbb E_p[\mathcal{P}_S] - \sum_{s\ge K} \mathbb E_p\big[\mathcal{P}_S^{s\textup{-loc-bad}}\big].
\end{equation} 

We now bound the sum of \eqref{eq:Xns}. Fix $s\ge K$ and consider the set $\mathcal U = \{ u \in \mathbb Z^d : u_i \in \{0,s^{2d}\}, \ \forall i =1,\dots,d\}$. For $w\in \mathcal U$, we introduce
\begin{equation}
\mathcal B(w) = \{\Lambda_{2s^{2d}}(z) : z\in w+(4s^{2d}+1)\cdot \mathbb Z^d\}.
\end{equation} 
We denote by $\mathcal{Q}(w)$ the set of boxes in this partition that intersect 
$\partial S$. We explore them using the following algorithm. We first reveal the 
percolation configuration outside the union of these boxes. Then, as long as there 
exists an unexplored box in $\mathcal{Q}(w)$ that either contains $0$ or is 
connected to $0$ by an open path lying entirely in $S$ in the already-explored region, 
we select one such box and reveal the configuration inside it. We let 
$\mathcal{N}(w)$ denote the number of boxes in $\mathcal{Q}(w)$ revealed by this 
algorithm.

Observe that, throughout the exploration, whenever the configuration inside a new box is revealed, the following holds almost surely. Conditional on the configuration outside this box, the probability that the box contains a pioneer is at least $c_2 \exp(-C_2(\log s)^2)$, for some constants $c_2,C_2>0$ depending only on $d$. To see this, let $z$ be such that $B=\Lambda_{2s^{2d}}(z)$ intersects $\partial S$. Suppose first that $B$ does not contain $0$---the case $0\in B$ being analogous---and that the algorithm has selected $B$. This means that the exploration has revealed a collection of boundary points $x_1,\ldots,x_\ell\in \partial B$ which are connected to $0$ in $S\setminus B$.
Let $b\in B\cap \partial S$. Then, 
\begin{multline}
	\mathbb P_p[\textup{There exists a pioneer in $B$}\:|\:\textup{Exploration before $B$}]\\\geq \tau_{B,p}(x_1,b)\geq c_1\exp\big(-C_1(\log s)^2\big),
\end{multline}
where in the first inequality we used the fact that every open path from $x_1$ to $b$ in $B$ must intersect $\partial S$ at least once so that the first such point is  a pioneer, and where the second inequality follows from Lemma \ref{lem:KN}, using that since $S\subset \Lambda_{L(p)}$, there exists a translate of $\Lambda_\ell$ with $\ell\leq 2L(p)$ that is contained in $B$ and contains $x_1$ and $b$.
  As a consequence, for every $w\in \mathcal U$, one has
\begin{equation}
\mathbb E_p[\mathcal{P}_S] \ge c_1\exp\big(-C_1(\log s)^2\big) \cdot \mathbb E_p[\mathcal N(w)], 
\end{equation} 
and hence also, since $|\mathcal U|=2^d$, 
\begin{equation}\label{eq:Xneps}
\mathbb E_p[\mathcal{P}_S] \ge \frac{c_1}{2^d} \exp\big(-C_1(\log s)^2\big) \cdot \sum_{w\in \mathcal U} \mathbb E_p[\mathcal N(w)]. 
\end{equation}

For a box $q\in \mathcal Q(w)$, call \emph{interior} of $q$ the set of points in $q$ which are at distance at least $s^{2d}$ from the boundary of $q$. Observe that as $w$ varies in $\mathcal U$, the union of all the interiors of the boxes $q\in \mathcal Q(w)$ covers $\mathbb Z^d$, and in particular the whole boundary $\partial S$.   
Given a point $u$ on $\partial S$ which is in the interior of a box $q\in \mathcal Q(w)$, the event $\mathcal T_s^{\textup{loc}}(u)$ only depends on the configuration of edges inside $q$. Since there are $\lesssim s^{2d^2}$ such points in each box $q\in \mathcal Q(w)$, a union bound and Lemma~\ref{lem:Tsloc} give that, for some constants $c_2,C_2>0$, and for any $s$ as above,
\begin{equation}  
\mathbb E_p\big[\mathcal{P}_S^{s\textup{-loc-bad}}\big] \le C_2 s^{2d^2}\exp\big(-c_2(\log s)^4\big) \sum_{w\in \mathcal U}  \mathbb E_p[\mathcal N(w)].
\end{equation}
Using~\eqref{eq:Xneps} and taking $K$ large enough yields that 
\begin{equation}\label{eq:proof comp 4}
\sum_{s\ge K}\mathbb E_p\big[\mathcal{P}_S^{s\textup{-loc-bad}}\big] \le \frac 12 \cdot \mathbb E_p[\mathcal{P}_S].
\end{equation}
Plugging \eqref{eq:proof comp 4} in \eqref{eq:Xns} concludes the proof.
\end{proof}

\section{Uniform boundedness of $\varphi_{p_c}(S)$}\label{sec:proof bound phi(S)}

We now turn to the proof of Theorem \ref{thm:main}. Given Proposition \ref{prop: in average pioneers are regular and escapable}, it is sufficient to prove the following result. 

\begin{Lem}\label{lem:bounded regular escapable pioneers} There exists $K_3\geq 1$ such that the following holds. For every $K\geq K_3$, there exists $C_0=C_0(K,d)>0$ such that, for every finite $S\subset \mathbb Z^d$ containing $0$, 
\begin{equation}
	\mathbb E_{p_c}\big[|\mathcal X^{K}_S|\big]\leq C_0.
\end{equation}
\end{Lem}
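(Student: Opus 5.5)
We apply Proposition~\ref{prop: effective reversed SL} with $\Lambda=\mathbb Z^d$ and $p=p_c$, and choose a target point $x$ far from $S$. Since $S$ is finite, we may take $x$ with $|x|\geq R$, where $R\gg \textup{diam}(S)+K$. By \eqref{eq: 2pt full space estimate}, the left-hand side of \eqref{eq:effective SL 2} is at most $\tau_{p_c}(0,x)\lesssim |x|^{2-d}$. On the right-hand side, every $u\in\partial S$ and its associated $\mathbf{w}\in\Lambda_{K+1}(u)$ satisfy $\tau_{p_c}(\mathbf{w},x)\gtrsim |x|^{2-d}$, again by \eqref{eq: 2pt full space estimate}. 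Hence it suffices to show that, on the event $u\in\mathcal X^K_S$, the error term satisfies
\[
\mathsf E(\mathbf{w},x;\mathcal C(u);p_c)\leq \tfrac12\,\tau_{p_c}(\mathbf{w},x).
\]
We would then have $c\cdot\tfrac12\,|x|^{2-d}\,\mathbb E_{p_c}[|\mathcal X^K_S|]\lesssim |x|^{2-d}$, which gives $\mathbb E_{p_c}[|\mathcal X^K_S|]\leq C_0$ after dividing by $|x|^{2-d}$.

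The main step is to bound $\mathsf E(\mathbf{w},x;A;p_c)=\sum_{z\in A}\tau_{p_c}(\mathbf{w},z)\tau_{p_c}(z,x)$ uniformly over $u$-good realisations $A$ of $\mathcal C(u)$. Regularity controls the local density of $A$: for $s\geq K$ we have $|A\cap\Lambda_s(u)|\leq 20 s^4(\log s)^7$. Escapability gives $\mathrm{d}(\mathbf{w},A)>K/10$, so every $z\in A$ lies at distance at least $K/10$ from $\mathbf{w}$. We split the sum dyadically according to $|z-u|\in[2^k,2^{k+1})$ with $2^k\geq K/10$, treating the near scales $|z-u|\leq K$ as a single block in which $|z-\mathbf{w}|\geq K/10$.

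For $z$ with $|z-u|\leq |x|/2$, we have $\tau_{p_c}(z,x)\lesssim|x|^{2-d}\asymp \tau_{p_c}(\mathbf{w},x)$ and $\tau_{p_c}(\mathbf{w},z)\lesssim |z-u|^{2-d}$ (for $2^k\gtrsim K$). The contribution of these points is therefore at most
\[
|x|^{2-d}\Big(K^{-(d-2)}\cdot 20K^4(\log K)^7+\sum_{2^k\geq K}2^{-k(d-2)}\,2^{4k}k^7\Big)\lesssim |x|^{2-d}\,K^{6-d+o(1)},
\]
which uses $d>6$. For $z$ far away, with $|z-u|>|x|/2$, we bound $\tau_{p_c}(\mathbf{w},z)\lesssim |x|^{2-d}$ and use $\sum_{z\in A\cap \Lambda_{2^{k+1}}(u)}\tau_{p_c}(z,x)$. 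Splitting again dyadically around $x$ and using the density bound around $u$, since every ball around $x$ lies in a box around $u$ of comparable radius, this contribution is $\lesssim |x|^{2-d}\,\sum_{2^k\gtrsim|x|}2^{-k(d-6)+o(k)}$, which is also small. Taking $K\geq K_3$ large makes the total error at most $\tfrac12\tau_{p_c}(\mathbf{w},x)$, as required.

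The main obstacle is the far region near $x$. There the density bound only gives $|A\cap\Lambda_r(x)|\leq|A\cap\Lambda_{|x|+r}(u)|\lesssim |x|^{4+o(1)}$, and this must be paired with $\sum_{z\in\Lambda_r(x)}\tau_{p_c}(z,x)$. I would handle this crudely: for $r\leq|x|/2$, bound $\sum_{z\in A\cap \Lambda_r(x)}\tau_{p_c}(z,x)\leq \min(|A\cap \Lambda_{2|x|}(u)|,\,r^d)\cdot$ (appropriate decay factor), or more simply $\sum_{z\in\Lambda_{|x|/2}(x)}\tau_{p_c}(z,x)\lesssim|x|^2$. Combined with $\tau_{p_c}(\mathbf{w},z)\lesssim|x|^{2-d}$, this gives $|x|^{4-d}$, which is $o(|x|^{2-d})$ only after multiplying by a factor; this step fails as stated. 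The fix is to use the better density bound instead: the $z\in A$ near $x$ number at most $|x|^{4+o(1)}$ and each contributes $\tau_{p_c}(\mathbf{w},z)\tau_{p_c}(z,x)\lesssim |x|^{2-d}\cdot 1$, giving $|x|^{6-d+o(1)}\cdot|x|^{2-d}$, which is negligible for large $|x|$ since $d>6$. With both regions controlled, letting $|x|\to\infty$ concludes the argument.
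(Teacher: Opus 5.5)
Your overall plan matches the paper: apply Proposition~\ref{prop: effective reversed SL} at $p_c$ with $\Lambda=\mathbb Z^d$, send $x$ far away, and control $\mathsf E$ near $\mathbf{w}$ using regularity and $\mathrm{d}(\mathbf{w},A)>K/10$. Your near-region computation is essentially \eqref{eq:pf main rep5}.

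The gap is in your reduction. For a single fixed $x$, you claim it suffices to show $\mathsf E(\mathbf{w},x;A;p_c)\le\frac12\tau_{p_c}(\mathbf{w},x)$ uniformly over all $u$-good $A$. That uniform bound is false. Regularity only caps $|A\cap\Lambda_s(u)|$ by $20s^4(\log s)^7$. Nothing stops $A=\mathcal C(u)$ from sending a thin arm from $u$ to $x$, which costs only $O(s)$ points in $\Lambda_s(u)$. For such $A$ one has $x\in A$, so $\tau_{A^c}(\mathbf{w},x)=0$, and the single term $z=x$ already gives $\mathsf E(\mathbf{w},x;A;p_c)\ge\tau_{p_c}(\mathbf{w},x)$. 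No deterministic density bound around $u$ can control the region near $x$.

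Your proposed fix also has an arithmetic slip. $|x|^{4+o(1)}$ points, each contributing $\lesssim|x|^{2-d}$, give $|x|^{6-d+o(1)}$, not $|x|^{6-d+o(1)}\cdot|x|^{2-d}$. And $|x|^{6-d+o(1)}\gg|x|^{2-d}\asymp\tau_{p_c}(\mathbf{w},x)$, so the fix does not close the gap.

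The missing idea is to let $x$ depend on the realisation $A$. The paper divides \eqref{eq:effective SL 1} by $\tau_{p_c}(0,x)$ and writes the expectation as a sum over $u$-good $A$. It then uses Fatou's lemma to move the limit inside:
\[
c^{-1}\ge\sum_{u\in\partial S}\sum_{A\ u\textup{-good}}\mathbb P_{p_c}[\mathcal C(u)=A]\,\liminf_{|x|\to\infty}\frac{\tau_{A^c}(\mathbf{w},x)}{\tau_{p_c}(0,x)}.
\]
Since $\mathcal C(u)$ is a.s.\ finite at $p_c$, one may take $A$ finite. Then for $|x|\ge 10(\mathrm{diam}(A)\vee K)$, every $z\in A$ is far from $x$, so $\tau_{p_c}(z,x)\le C_1\tau_{p_c}(0,x)$. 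Hence $\mathsf E\le C_1\tau_{p_c}(0,x)\sum_{z\in A}\tau_{p_c}(\mathbf{w},z)\lesssim K^{-1/2}\tau_{p_c}(0,x)$, and the problematic region never arises.

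Alternatively, you could keep $x$ fixed and bound the near-$x$ part of the error only in expectation. For example, it is at most
\[
\sum_{u\in\partial S}\sum_{|z-x|\le|x|/2}\tau_{p_c}(u,z)\,|x|^{2-d}\,\tau_{p_c}(z,x)\lesssim|\partial S|\,|x|^{6-2d},
\]
which is $o(|x|^{2-d})$ for fixed finite $S$. Either way, some averaging over $A$ or an $A$-dependent choice of $x$ is indispensable.
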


\begin{proof}[Proof of Theorem \textup{\ref{thm:main}}] Let $K_0$ be given by Proposition \ref{prop: in average pioneers are regular and escapable} and $K_3$ be given by Lemma \ref{lem:bounded regular escapable pioneers}. Set $K:=K_0\vee K_3$, and let $c_0(K,d)$ and $C_3=C_3(K,d)$ be given by Proposition \ref{prop: in average pioneers are regular and escapable} and Lemma \ref{lem:bounded regular escapable pioneers} respectively. By these results, for every $S\subset \mathbb Z^d$ finite containing $0$, one has
\begin{equation}
	\varphi_{p_c}(S)\lesssim \mathbb E_{p_c}[\mathcal{P}_S]\leq c_0^{-1}\cdot\mathbb E_{p_c}\big[|\mathcal X^K_{S}|\big]\leq c_0^{-1}C_0=:C,
\end{equation}
where the first inequality follows from \eqref{eq:comparison pioneers and phi}. This gives the result in the case where $S$ is finite. 

The case of infinite $S$ follows by approximation. Let $p<p_c$. On the one hand, 
\begin{equation}
	\limsup_{n\rightarrow \infty}\varphi_p(S\cap \Lambda_n)\leq \limsup_{n\rightarrow \infty}\varphi_{p_c}(S\cap \Lambda_n)\leq C.
\end{equation}
On the other hand, for every $n\geq 1$,
\begin{equation}
	0\leq \varphi_p(S\cap \Lambda_n)-\sum_{\substack{u\in S\\v\notin S\\u \sim v}}\mathds{1}\{u\in \Lambda_n\}\tau_{S\cap \Lambda_n,p}(0,u)\cdot p\leq \varphi_{p}(\Lambda_n).
\end{equation}
Since $p<p_c$, $\varphi_p(\Lambda_n)\rightarrow 0$ as $n$ tends to infinity. Moreover, by the monotone convergence theorem,
\begin{equation}
	\lim_{n\rightarrow \infty}\sum_{\substack{u\in S\\v\notin S\\u \sim v}}\mathds{1}\{u\in \Lambda_n\}\tau_{S\cap \Lambda_n,p}(0,u)\cdot p=\varphi_{p}(S).
\end{equation}
Combining the above displayed equations gives that $\varphi_p(S)\leq C$. Taking the (monotone) limit $p\nearrow p_c$ concludes the proof.
%
%
\end{proof}

Before moving to the proof of Lemma \ref{lem:bounded regular escapable pioneers}, we complete the proofs of Proposition \ref{prop:reversed SL} and Corollary \ref{cor:partial monot}.
\begin{proof}[Proof of Proposition \textup{\ref{prop:reversed SL}}] Let $\varepsilon>0$. Let $S\subset \mathbb Z^d$ finite containing $0$ and $x\in \mathbb Z^d$ 
By Theorem \ref{thm:main}, one has 
\begin{equation}\label{eq:proof prop2.1}
\sum_{\substack{u\in S: \: |u-x|\geq \varepsilon |x|\\v\notin S\\u\sim v}}\tau_{S,p_c}(0,u)\cdot p_c\leq \varphi_{p_c}(S)\lesssim 1.
\end{equation}
If $u\in \partial S$ satisfies $|u-x|\geq \varepsilon |x|$, then by \eqref{eq: 2pt full space estimate}, there exists $c_1=c_1(\varepsilon,d)>0$ such that for every $v\sim u$, one has $c_1 \tau_{p_c}(v,x)\leq \tau_{p_c}(0,x)$.
Combining this observation with \eqref{eq:proof prop2.1} concludes the proof.
\end{proof}

\begin{proof}[Proof of Corollary~\textup{\ref{cor:partial monot}}] Let $p\leq p_c$ and fix $S,\Lambda$ as in the statement. Applying Lemma \ref{lem:SL} gives, for every $x\in \partial\Lambda$,
\begin{equation}
	\tau_{\Lambda,p}(0,x)\leq \tau_{S,p}(0,x)+\sum_{\substack{u\in S\\v\notin S\\ u\sim v}}\tau_{S,p}(0,u)\cdot p \cdot \tau_{\Lambda,p}(v,x)
\end{equation}
Multiplying the above display by $p$ and summing it over $y\notin \Lambda$ with $y\sim x$, and then over $x\in \partial \Lambda$ gives
\begin{equation}
	\varphi_p(\Lambda)\leq \varphi_p(S)+\sup_{z\in \Lambda}\varphi_p(\Lambda-z)\cdot \varphi_p(S) \lesssim \varphi_p(S),
\end{equation}
where the second inequality follows from Theorem \ref{thm:main}. This concludes the proof.
\end{proof}

We now turn to the proof of Lemma \ref{lem:bounded regular escapable pioneers}.

\begin{proof}[Proof of Lemma~\textup{\ref{lem:bounded regular escapable pioneers}}] We let $p=p_c$ and drop it from the notation. We also fix a finite set $S\subset \mathbb Z^d$ containing $0$, and stress that all the constants that appear below are independent of $S$. Let $K\geq 1$ and let $c_1=c_1(K,d)>0$ be given by Proposition \ref{prop: effective reversed SL}. Applying \eqref{eq:effective SL 1} of Proposition \ref{prop: effective reversed SL} to $K$, $\Lambda=\mathbb Z^d$, and $S$, we obtain for every $x\in \mathbb Z^d$,
\begin{equation}\label{eq:pf main rep1}
	\tau(0,x)\geq c_1\sum_{\substack{u\in \partial S}}\mathbb E\Big[\mathds{1}\{u \in \mathcal X^K_{S}\}\cdot\tau_{\mathcal C(u)^c}(\mathbf{w},x)\Big]. 
\end{equation}
Combining \eqref{eq:pf main rep1} and Fatou's lemma gives
\begin{equation}\label{eq:pf main rep2}
	c_1^{-1}\geq \sum_{u\in \partial S}\sum_{A\textup{ $u$-good}}\mathbb P[\mathcal C(u)=A]\cdot \liminf_{|x|\rightarrow \infty} \frac{\tau_{A^c}(\mathbf{w},x)}{\tau(0,x)}.
\end{equation}
To conclude the proof, it is sufficient to prove the existence of $c_2=c_2(d)>0$ such that, if $K$ is large enough, for every $u,A$ as above,
\begin{equation}\label{eq:pf main rep3}
\liminf_{|x|\rightarrow \infty} \frac{\tau_{A^c}(\mathbf{w},x)}{\tau(0,x)}\geq c_2,
\end{equation}
which, by the same computation as in the proof of Proposition \ref{prop: effective reversed SL}, is in turn implied by
\begin{equation}\label{eq:pf main rep3.5}
	\liminf_{|x|\rightarrow \infty}\left(\frac{\tau(\mathbf{w},x)}{\tau(0,x)}-\frac{\mathsf{E}(\mathbf{w},x;A)}{\tau(0,x)}\right)\geq c_2.
\end{equation}
Fix $u\in \partial S$ and a $u$-good set $A$. Since $\mathcal C(u)$ is finite almost surely, we may assume that $A$ is finite. Let $|x|\geq 10(\textup{diam}(A)\vee K)$. By \eqref{eq: 2pt full space estimate}, there exist $c_3,C_1>0$ (which only depend on $d$) such that, for every $z\in A$,
\begin{equation}\label{eq:pf main rep4}
	\tau(\mathbf{w},x)\geq c_3\tau(0,x), \qquad \tau(z,x)\leq C_1\tau(0,x).
\end{equation}
Moreover, there exists $C_2=C_2(d)>0$ such that
\begin{equation}
\label{eq:pf main rep5}
	\sum_{z\in A}\tau(\mathbf{w},z)\lesssim \sum_{\ell\geq \log_2(K/10)}\frac{|A\cap (\Lambda_{2^{\ell+1}}(\mathbf{w})\setminus \Lambda_{2^{\ell}}(\mathbf{w}))|}{2^{\ell(d-2)}}\lesssim \sum_{\ell\geq \log_2(K/10)}\frac{2^{4\ell}\ell^7}{2^{\ell(d-2)}} 
	\leq \frac{C_2}{\sqrt{K}},
	\end{equation}
where we used that $A\cap \Lambda_{K/10}(\mathbf{w})=\emptyset$, \eqref{eq: 2pt full space estimate}, and the fact that, since $A$ is $u$-good, for every $k\geq K/10$,
\begin{equation}
	|A\cap (\Lambda_{2k}(\mathbf{w})\setminus \Lambda_{k}(\mathbf{w}))|\leq |A\cap \Lambda_{10k}(u)|\lesssim k^4(\log k)^7.
\end{equation}
Combining \eqref{eq:pf main rep4} and \eqref{eq:pf main rep5} gives
\begin{equation}\label{eq:pf main rep6}
	\mathsf E(\mathbf{w},x;A)=\sum_{z\in A}\tau(\mathbf{w},z)\tau(z,x)\leq \frac{C_1C_2}{\sqrt{K}}\tau(0,x).
\end{equation}
Plugging \eqref{eq:pf main rep4} and \eqref{eq:pf main rep6} into \eqref{eq:pf main rep3.5} gives
\begin{equation}\label{eq:pf main rep7}
\liminf_{|x|\rightarrow \infty}	\left(\frac{\tau(\mathbf{w},x)}{\tau(0,x)}-\frac{\mathsf{E}(\mathbf{w},x;A)}{\tau(0,x)}\right)
\geq 
c_3-\frac{C_1C_2}{\sqrt{K}}.
\end{equation}
Choosing $K$ large enough so that $C_1C_2/\sqrt{K}\leq c_3/2$ concludes the proof.
\end{proof}

\begin{Rem} The computation of \eqref{eq:pf main rep5} justifies the need for the concept of $K$-escapability. This will be used several times below.
\end{Rem}

\section{Analysis of the sharp length}\label{sec:sharp length}

In this section, we prove Theorem \ref{thm:sharplength} and deduce from it the upper bounds in Theorems \ref{thm:nc bounds phi(S)} and \ref{thm:bounds 2pt}. Recall that the susceptibility $\chi(p)$ is defined for $p<p_c$ by
\begin{equation}
	\chi(p)=\sum_{x\in \mathbb Z^d}\tau_p(0,x).
\end{equation}
We analyse $L(p)$ by relating it to $\chi(p)$, which is well-understood by \eqref{eq:susceptibility}.

\begin{Prop}\label{prop.L(p).chi(p)} Let $d>6$ and assume that \eqref{eq: 2pt full space estimate} holds. There exist $c,C>0$ such that, for every $p\in [p_c/2,p_c)$,
\begin{equation}\label{eq:bounds on chi in terms of L(p)}
	cL(p)^2\leq \chi(p)\leq CL(p)^2.
\end{equation}
\end{Prop}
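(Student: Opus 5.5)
Both bounds come from comparing $\chi(p)$ with sums over the box $\Lambda_{L(p)}$, with Lemma \ref{lem:SL} giving the upper bound and the partially reversed inequality giving the lower bound. Throughout write $L=L(p)$.

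\emph{Upper bound.} By definition of $L(p)$ there is $S\subset\Lambda_{L}$ with $0\in S$ and $\varphi_p(S)<\tfrac12$. Sum Lemma \ref{lem:SL}, with $\Lambda=\mathbb Z^d$, over all $x$. Using translation invariance, $\sum_x\tau_p(v,x)=\chi(p)$, so
\begin{equation*}
\chi(p)\le \sum_{x\in S}\tau_{S,p}(0,x)+\varphi_p(S)\chi(p),
\qquad\text{hence}\qquad
\chi(p)\le 2\sum_{x\in\Lambda_L}\tau_p(0,x)\lesssim L^2 .
\end{equation*}
The last step uses \eqref{eq: 2pt full space estimate} together with $\tau_p\le\tau_{p_c}$. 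To make the division by $1-\varphi_p(S)$ legitimate, note that $\chi(p)<\infty$ for $p<p_c$.

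\emph{Lower bound.} It suffices to show that $\sum_{x\in\Lambda_{L/2}}\tau_p(0,x)\gtrsim L^2$, and by \eqref{eq:susceptibility} we may assume $L$ is large. The natural route is to iterate a reversed Simon--Lieb estimate across the slabs $\mathbb H_j$, as Remark \ref{rem:comparing mathcal X to pioneers} suggests. Fix $j\le L/2$. Since $\Lambda_j\subset\Lambda_{L}$ has $\varphi_p(\Lambda_j)\ge\tfrac12$, symmetry gives $\varphi_p(\mathbb H_j\cap\Lambda_{L})\gtrsim1$. More usefully, $\mathbb E_p[\mathcal P_{S_j}]\gtrsim1$ for $S_j:=\Lambda_j$.

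Apply \eqref{eq:effective SL 1} with $S=\Lambda_j$, $\Lambda=\mathbb Z^d$, and sum over $x$ in a region $R$ (say $R=\Lambda_{L}$):
\begin{equation*}
\sum_{x\in R}\tau_p(0,x)\ \ge\ c\sum_{u\in\partial S}\mathbb E_p\Big[\mathds 1\{u\in\mathcal X^K_S\}\sum_{x\in R}\big(\tau_p(\mathbf w,x)-\mathsf E(\mathbf w,x;\mathcal C(u);p)\big)\Big].
\end{equation*}
Next control the error term. Summed over $x$, it is $\sum_{z\in A}\tau_p(\mathbf w,z)\sum_{x}\tau_p(z,x)\le \frac{C}{\sqrt K}\chi(p)$, by the same computation as \eqref{eq:pf main rep5}, which uses only the upper bound $\tau_p\le\tau_{p_c}$. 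The main term is $\sum_{x\in R}\tau_p(\mathbf w,x)$, which is comparable to $\sum_{x\in\Lambda_L}\tau_p(0,x)$ once $R$ is enlarged by $2L$.

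By Proposition \ref{prop: in average pioneers are regular and escapable}, $\mathbb E_p|\mathcal X^K_S|\ge c_0\mathbb E_p[\mathcal P_S]\ge c_0/2$. This yields a recursive inequality for $F:=\sum_{x\in\Lambda_{L}}\tau_p(0,x)$ of the form $F\ge c\,(F-CK^{-1/2}\chi)$. That inequality is too weak on its own: the error is global while the gain is only a constant.

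For this reason I would instead work with the box-sums $F(k)=\sum_{x\in\Lambda_k}\tau_p(0,x)$ and aim for the increment bound $F(k+1)-F(k)\gtrsim k$ for $k\le L/2$. This is the analogue of the random-walk identity. The difference over the shell is $\sum_{x\in\partial\Lambda_{k+1}}\tau_p(0,x)$. Bound it below using the half-space version of \eqref{eq:effective SL 1}, with $S=\mathbb H_j$ and $\Lambda=\mathbb H_{j+1}$ and $\mathbf w$ placed on the next hyperplane, as allowed by Remark \ref{rem:comparing mathcal X to pioneers}. Iterating in $j$ makes the hyperplane sums $h(j):=\sum_{x_1=-j}\tau_{\mathbb H_j,p}(0,x)$ grow at least like $c\,\varphi$-mass $\gtrsim1$ per layer. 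The subtracted error here is local: it is $\sum_{z\in A}\tau_p(\mathbf w,z)\sum_{x_1=-(j+1)}\tau_p(z,x)$, and the inner hyperplane sum is bounded by a constant because $\sum_{x_1=\mathrm{const}}\tau_{p_c}(z,x)\lesssim1$ fails logarithmically only for $d-2\le d-1$. In fact it is $\lesssim1$ since $d-2>d-1-1$... this must be checked, though the exponent $d-2$ against the dimension $d-1$ of the hyperplane does give summability. Hence the error is at most $CK^{-1/2}$ against a gain of order $1$.

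Summing $h(j)\gtrsim1$ over $j\le L/2$, and then summing the layers again to pass from the hyperplane two-point mass to the volume sum, gives $\chi(p)\gtrsim\sum_{j\le L/2}j\gtrsim L^2$. \textbf{Main obstacle:} the lower bound, specifically ensuring the gained mass is not cancelled. This needs three things. First, the mass $\mathbb E|\mathcal X^K|\gtrsim1$ must be valid for the half-spaces truncated inside $\Lambda_{L}$, which requires $S\subset\Lambda_{L(p)}$ for Lemma \ref{lem: pioneers are in average regular}. Second, the error term summed over a hyperplane must be $O(K^{-1/2})$ using only upper bounds. Third, the restriction to $\mathbb H_{j+1}$ must not destroy the iteration.
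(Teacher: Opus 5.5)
Your upper bound is the paper's argument and is correct. Your eventual route for the lower bound is also the paper's: telescoping over half-spaces $\mathbb H_j\subset\mathbb H_{j+1}$ with $\mathbf w$ on the next hyperplane. But the lower bound has a genuine gap, at exactly the step you flag: $\sum_{x:\,x_1=\mathrm{const}}\tau_{p_c}(z,x)$ is \emph{not} $\lesssim 1$. The hyperplane is $(d-1)$-dimensional, so summability would need $d-2>d-1$, which fails. In fact the sum is $\asymp\sum_{r\geq 1}r^{d-2}\cdot r^{2-d}=\infty$. For $p<p_c$ it is finite but blows up as $p\uparrow p_c$ (heuristically like $L(p)$). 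So with full-space two-point functions in the error term, the only available bound per layer is $K^{-1/2}$ times this unbounded quantity, which swamps the gain of order $1$. Your bookkeeping is also off. The quantity $h(j)=\sum_{x_1=-j}\tau_{\mathbb H_j,p}(0,x)\asymp\varphi_p(\mathbb H_j)$ is bounded by Theorem~\ref{thm:main} and does not grow with $j$. The correct scheme fixes a target hyperplane $\{x_1=-k\}$ with $k\leq L(p)/2$ and writes $\tau_p(0,x)\geq\sum_{j=k}^{L(p)-1}\big(\tau_{\mathbb H_{j+1},p}(0,x)-\tau_{\mathbb H_j,p}(0,x)\big)$. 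It then shows that each increment, summed over $\{x_1=-k\}$, is $\gtrsim 1$. The main term there is $\sum_{y_1=-(j+1-k)}\tau_{\mathbb H_{j+1-k},p}(0,y)\gtrsim\varphi_p(\Lambda_{j+1-k})$, which is $\gtrsim 1$ by the definition of $L(p)$. This gives $\sum_{x_1=-k}\tau_p(0,x)\gtrsim L(p)$, and summing over $k$ gives $\chi(p)\gtrsim L(p)^2$.

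The missing idea is to keep the error in half-space form, $\mathsf E_{\mathbb H_{j+1}}(\mathbf w,x;A;p)=\sum_{z\in A}\tau_{\mathbb H_{j+1},p}(\mathbf w,z)\tau_{\mathbb H_{j+1},p}(z,x)$. You then bound hyperplane sums of the \emph{half-space} two-point function using Theorem~\ref{thm:main}, which your proposal never invokes. This is Claim~\ref{claim2}, which yields $\sum_{x_1=-k}\tau_{\mathbb H_{j+1},p}(z,x)\lesssim 1+(j+1+z_1)$. That is still not $O(1)$: it grows linearly in the distance from $z$ to $\partial\mathbb H_{j+1}$. Since $j+1+z_1\leq|z-\mathbf w|$, the regular-point computation \eqref{eq:pf main rep5} picks up an extra factor $2^\ell$ and becomes $\sum_\ell 2^{\ell(7-d)}\ell^7$, which converges only for $d>7$. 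For $d=7$ the paper therefore restricts further to $\widetilde{\mathcal X}^K_j$. This controls $|A\cap\mathcal V^j_s(y)|$, where $\mathcal V^j_s(y)$ is the set on which $\tau_{\mathbb H_{j+1},p}(\cdot,y)\geq s^{3/2-d}$. Lemma~\ref{lem: bound mathcal X tilde} then shows these points still have expected number $\gtrsim 1$. Its proof uses $|\mathcal V^j_s(y)|\lesssim s^{d-1/2}$ (again from Theorem~\ref{thm:main}) together with Theorem~\ref{thm: asselah schapira}. That lemma also resolves your first obstacle: pioneers of $\Lambda_j\subset\Lambda_{L(p)}$ lying on the face $u_1=-j$ are pioneers of $\mathbb H_j$, and by symmetry they carry at least a $\tfrac{1}{2d}$ fraction of $\mathbb E_p[\mathcal P_{\Lambda_j}]\gtrsim 1$.
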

The upper bound is very standard and already appeared in \cite{hutchcroft2022derivation}. The main novelty lies in the derivation of a matching order lower bound.

\begin{proof}[Proof of Theorem~\textup{\ref{thm:sharplength}}] Let $p\in [p_c/2,p_c)$. Combining Proposition \ref{prop.L(p).chi(p)} and \eqref{eq:susceptibility} gives
\begin{equation}
	L(p)^2\asymp \chi(p)\asymp (p_c-p)^{-1}.
\end{equation}
This concludes the proof.
\end{proof}

We now turn to the proof of Proposition \ref{prop.L(p).chi(p)}. Recall that $\mathbb H=\mathbb N\times \mathbb Z^{d-1}$, and for $\ell\in \mathbb Z$, $\mathbb H_\ell=\mathbb H -\ell \mathbf{e}_1$. 

\begin{proof} Fix $p\in[p_c/2,p_c)$.

\vspace{5pt}
\textbf{Proof of the upper bound.} We first prove the upper bound in \eqref{eq:bounds on chi in terms of L(p)}. Applying Lemma \ref{lem:SL} to $p$, $\Lambda=\mathbb Z^d$ and a finite set $S\subset \mathbb Z^d$, and summing it over $x\in \mathbb Z^d$ yields
\begin{equation}\label{eq:pf L(p)1}
	\chi(p)\leq \sum_{x\in \mathbb Z^d}\tau_{S,p}(0,x)+\varphi_p(S)\chi(p).
\end{equation} 
By definition of $L(p)$, there exists $S\subset \Lambda_{L(p)}$ containing $0$ and satisfying $\varphi_p(S)<\tfrac{1}{2}$. Plugging this $S$ in \eqref{eq:pf L(p)1} gives
\begin{equation}
	\chi(p)\leq 2\sum_{x\in \mathbb Z^d}\tau_{p,S}(0,x)\leq 2\sum_{x\in \Lambda_{L(p)}}\tau_{p_c}(0,x)\lesssim L(p)^2,
\end{equation}
where the last inequality follows from \eqref{eq: 2pt full space estimate}. This gives the upper bound in \eqref{eq:bounds on chi in terms of L(p)}.

\vspace{5pt}
\textbf{Proof of the lower bound.} 
 We now turn to the proof of the lower bound in \eqref{eq:bounds on chi in terms of L(p)}. The proof follows a similar strategy as \cite{DumPan24Perco}, but does not rely on any estimate on $\tau_{\mathbb H,p_c}$. This is made possible by the fact that, unlike \cite{DumPan24Perco}, we work with averaged quantities for which sharp estimates are provided by Theorem~\ref{thm:main}. 

Let $x\in \mathbb Z^d$ with $x_1\in \{-\lfloor \tfrac{L(p)}{2}\rfloor,\ldots,0\}$. Thanks to Theorem \ref{thm:pre reversed SL} applied to $\Lambda=\mathbb H_{j+1}$ and $S=\mathbb H_j$ for $j\in \{|x_1|,\ldots, L(p)-1\}$,
\begin{align}
	\tau_p(0,x)&\geq \sum_{j=|x_1|}^{L(p)-1}\Big(\mathbb P_p[0\connect{\mathbb H_{j+1}\:}x]-\mathbb P_p[0\connect{\mathbb H_j\:}x]\Big) \notag
	\\&\geq p\sum_{j=|x_1|}^{L(p)-1}\sum_{\substack{u\in \mathbb H_j\\v\in \mathbb H_{j+1}\setminus  \mathbb H_j\\u\sim v}}\mathbb E_p\Big[\mathds{1}\{0\connect{\mathbb H_j\:}u\}\mathbb P_p[v\connect{}x\textup{ in }\mathbb H_{j+1}\setminus \mathcal C(u;\mathbb H_{j+1})]\Big],\label{eq:pf chi LB 1}
\end{align}
See Figure \ref{fig:proof L(p)} for an illustration.
\begin{figure}[htp]
	\begin{center}
	\includegraphics{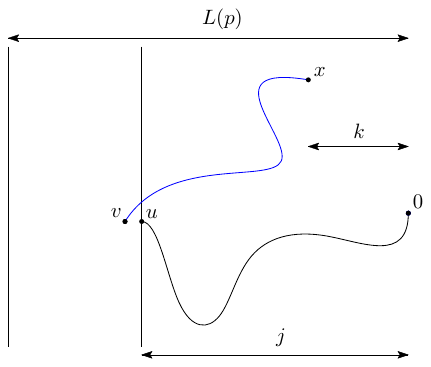}
	\put(-195,8){$\mathbb H_{L(p)-1}$}
		\caption{An illustration of the proof of the bound $\chi(p) \gtrsim L(p)^2$. The idea is to reconstruct $\tau_p(0,x)$ for $x_1=-k$ according to the above picture. The black line (resp.\ blue line) represents the two-point function $\tau_{\mathbb{H}_j, p}(0, u)$ (resp.\ $\tau_{\mathbb{H}_{j+1} \setminus \mathcal C(u;\mathbb H_{j+1}), p}(v, x)$). The parameters satisfy $j \geq k$ and range over an interval of length $\asymp L(p)$. By Theorem~\ref{thm:pre reversed SL}, summing this diagram over $j,k, \{u,v\},x$ yields a lower bound on $\chi(p)$. If the restriction to $\mathcal{C}(u)^c$ could be dropped from the blue line, then by definition of $L(p)$, the sum over $\{u,v\},x$ for fixed $j, k$ with $L(p) > j \geq k$ would be bounded below by a constant $c = c(d) > 0$. The core of the argument, carried out as in Section~\ref{sec:proof bound phi(S)}, is to show that this restriction can indeed be removed, in a suitable averaged sense.}
		\label{fig:proof L(p)}
	\end{center}
\end{figure}

Similarly to what was done in Section~\ref{sec:proof bound phi(S)}, the idea is to restrict the last sum of \eqref{eq:pf chi LB 1} to points $u$ lying in $\mathcal{X}_j^K := \mathcal{X}_{\mathbb{H}_j,\mathbb H_{j+1}}^K$. This approach would yield the desired result, except that the argument breaks down in $d = 7$. The reason for this limitation is the absence of pointwise estimates on the two-point function restricted to a half-space: to conclude, it would suffice to have any improvement over the exponent $d-2$ in \eqref{eq: 2pt full space estimate} for $\tau_{\mathbb{H}, p_c}(0, x)$. Nevertheless, Theorem~\ref{thm:main} tells us that the quantities $\varphi_{p_c}(\mathbb{H}_\ell)$, for $\ell \geq 0$, are uniformly bounded. This means that regions where the half-space two-point function is insufficiently small cannot be too large. We exploit this by imposing additional restrictions on $\mathcal{X}_j^K$. Also, as explained in Remark \ref{rem:comparing mathcal X to pioneers}, we can modify the definition of $K$-escapability in $\mathbb H_{j+1}$ so that, for $u\in \mathcal X_j^K$, the triplet $(\mathbf{v},\mathbf{w},\boldsymbol{\gamma})$ associated with $u$ satisfies $\mathbf{v}_1=\mathbf{w}_1=-(j+1)$.

For $y\in \mathbb H_{j+1}$ with $y_1=-(j+1)$ and $s>0$, define the set
\begin{equation}
\mathcal V_s^j(y) := \{z \in \Lambda_s(y) : \tau_{\mathbb H_{j+1},p}(z,y)\ge s^{\frac 32 -d}\}. 
\end{equation}
This set is expected to be relatively small since the bound $\tau_{\mathbb H_{j+1},p_c}(z,y)\lesssim (1+|z-y|)^{1-d}$ should hold \cite{panis2025sharp}: we expect that there is $\epsilon>0$ such that $|\mathcal V_s^j(y)|\lesssim |\Lambda_s(y)|^{1-\epsilon}$. This motivates the following definition:
\begin{equation}
\widetilde {\mathcal X}_j^K := \mathcal X_j^K \cap \Big\{u \in \partial \mathbb H_j: |\mathcal C(u;\mathbb H_{j+1}) \cap \mathcal V_s^j(y)| \le 20s^{4-\frac{1}{d}}, \quad \forall s\ge K, \ \forall y\in \Lambda_K(u), \: y_1=-(j+1)\Big\}. 
\end{equation}
The constant $20$ here plays the same role as in Section \ref{sec:proof bound phi(S)} with the notion of $(K,20)$-regular points. Compared to Section \ref{sec:proof bound phi(S)}, we now say that a set $A$ is \emph{$u$-super good} if $\mathbb P_p[\mathcal C(u;\mathbb H_{j+1})=A]>0$ and if it is such that $u\in \widetilde{\mathcal X}_j^K$. In particular, if $A$ is $u$-super good, one has  
\begin{equation}\label{eq:additional restriction}
|A\cap \mathcal V_s^j(y)| \le 20s^{4-\frac 1d}, \quad \text{for every } s\ge K, 
\text{ and every }y\in \Lambda_K(u) \textup{ with }y_1=-(j+1). 
\end{equation}
By \eqref{eq:pf chi LB 1} and a straightforward adaptation of Proposition \ref{prop: effective reversed SL}, for every $K$, there exists $c_1=c_1(K,d)>0$ such that,
\begin{equation}\label{eq:pf chi LB 2}
	\tau_p(0,x)\geq c_1\sum_{j=|x_1|}^{L(p)-1}\sum_{u\in \partial \mathbb H_j}\mathbb E_p\Big[\mathds{1}\{u\in \widetilde{\mathcal X}_j^K\}\Big(\tau_{\mathbb H_{j+1},p}(\mathbf{w},x)-\mathsf{E}_{j}(\mathbf{w},x;\mathcal C(u;\mathbb H_{j+1}))\Big)\Big],
\end{equation}
where $\mathsf{E}_{j}(\mathbf{w},x;\mathcal C(u;\mathbb H_{j+1})):=\mathsf{E}_{\mathbb H_{j+1}}(\mathbf{w},x;\mathcal C(u;\mathbb H_{j+1}),p)$ was defined in \eqref{eq:def additive error term}.

To advance the proof and handle the term $\mathsf{E}_{j}(\mathbf{w},x;\mathcal C(u;\mathbb H_{j+1}))$, we rely on the following two claims.
\begin{Claim}\label{claim2} For every $k\geq 0$ and $\ell\in \mathbb Z$ satisfying $k\geq \ell$,
\begin{equation}
	\sum_{z: z_1=-\ell}\tau_{\mathbb H_k,p_c}(0,z)\lesssim 1+  (k-\ell) \wedge k.
\end{equation}
\end{Claim}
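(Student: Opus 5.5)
The plan is to exploit the uniform bound on $\varphi_{p_c}(\mathbb H_m)$ from Theorem \ref{thm:main}, via a Simon--Lieb type decomposition, rather than any pointwise half-space estimate. Recall $\mathbb H_k=\{z:z_1\geq -k\}$, so $0$ is at distance $k$ from the boundary hyperplane $\{z_1=-k\}$, and the condition $k\ge \ell$ says the slice $\{z_1=-\ell\}$ lies in $\mathbb H_k$. We treat two regimes: $\ell\geq 0$ (the slice lies between $0$ and the boundary, at distance $k-\ell$ from the boundary) and $\ell<0$ (the slice lies on the far side of $0$).

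\textbf{Case $\ell\ge 0$.} Set $m:=k-\ell\ge 0$. We aim for a bound $\lesssim 1+m$. The key observation is that the slice sum is controlled by a telescoping sum of $\varphi$'s. For each $i\in\{\ell,\dots,k\}$, the boundary layer of $\mathbb H_i$ is the hyperplane $\{z_1=-i\}$ (up to the range of the edges in the spread-out case), and $\varphi_{p_c}(\mathbb H_i)\asymp\sum_{z_1=-i}\tau_{\mathbb H_i,p_c}(0,z)$ by \eqref{eq:comparison pioneers and phi}. This gives the bound on the restricted function $\tau_{\mathbb H_\ell}$, not $\tau_{\mathbb H_k}$.

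To pass from $\mathbb H_\ell$ to $\mathbb H_k$, I would apply Lemma \ref{lem:SL} with $S=\mathbb H_\ell$ and $\Lambda=\mathbb H_k$ for $z$ with $z_1=-\ell$:
\[
\tau_{\mathbb H_k}(0,z)\le\tau_{\mathbb H_\ell}(0,z)+\sum_{u,v}\tau_{\mathbb H_\ell}(0,u)\,p_c\,\tau_{\mathbb H_k}(v,z).
\]
Summing over $z$ in the slice, the main term is bounded by $\varphi_{p_c}(\mathbb H_\ell)\lesssim 1$. The error term is bounded by $\varphi_{p_c}(\mathbb H_\ell)$ times $\sup_v\sum_{z_1=-\ell}\tau_{\mathbb H_k}(v,z)$, where $v_1\in[-\ell-1,-\ell-L]$. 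This recursion does not close by itself. Instead, I would iterate over the layers $j=\ell,\dots,k$, using disjoint decompositions at each intermediate layer, so that each layer contributes a uniformly bounded $\varphi_{p_c}(\mathbb H_j-\text{shift})$. Translation invariance in directions $2,\dots,d$ makes the sums independent of the transverse position.

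A cleaner route, and the one I would try first, is a last-exit decomposition in the $\mathbf e_1$ direction. Any connection from $0$ to $z$ in $\mathbb H_k$ realises some deepest layer $j\in[\ell,k]$ it reaches. Bounding via BK and Theorem \ref{thm:main}, each depth $j$ contributes $O(1)$, giving $\lesssim 1+(k-\ell)$. The full-space fact that $\sum_{z_1=-\ell}\tau_{p_c}(0,z)$ is bounded by $O(1)$ plus contributions of order one per unit of distance explored is consistent with this.

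\textbf{Case $\ell<0$.} Here the desired bound is $1+k$. By monotonicity, $\tau_{\mathbb H_k}\le\tau_{\mathbb H_{k'}}$ for $k'\geq k$, and the same layer-by-layer argument (now with layers between $-k$ and the slice, of which there are $O(k)$ that matter) gives $1+k$. Alternatively, one can shift the origin, using $\tau_{\mathbb H_k}(0,z)=\tau_{\mathbb H_{k-\ell}}(0,z-\ell\mathbf e_1)$ after reflection symmetry $x_1\mapsto -x_1$ when comparing half-spaces, and reduce to the first case with the roles exchanged. This produces the $\wedge$ in the statement.

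\textbf{Main obstacle.} The difficult step is organising the layer decomposition so that each layer is charged a uniformly bounded $\varphi_{p_c}(\mathbb H_\cdot)$ instead of compounding multiplicatively. The remedy I would attempt is to write
\[
\sum_{z_1=-\ell}\tau_{\mathbb H_k}(0,z)\le\sum_{j}\bigl(\text{contribution from first visits at depth }j\bigr),
\]
with each contribution bounded by $\varphi_{p_c}(\mathbb H_j)\cdot\sup_v\varphi_{p_c}(\text{half-space seen from }v)\lesssim1$, using Theorem \ref{thm:main} twice.
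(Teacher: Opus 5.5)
Your final route is the paper's argument: decompose each open path from $0$ to $z$ according to the deepest layer $j$ it reaches, apply BK to get $\tau_{\mathbb H_j,p_c}(0,y)\,\tau_{\mathbb H_j,p_c}(y,z)$, and bound each of the two resulting slice sums by a half-space $\varphi_{p_c}$ using Theorem~\ref{thm:main} (via translation invariance and symmetry), with $k-\ell+1$ (resp.\ $k+1$) admissible layers. Two minor points: the preliminary Simon--Lieb attempt can simply be dropped, and in your alternative reduction for $\ell<0$ the correct identity is $\tau_{\mathbb H_k,p_c}(0,z)=\tau_{\mathbb H_{k-\ell},p_c}(0,-z)$, not the one you wrote, though the layer argument already covers that case directly.
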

\begin{proof}[Proof of Claim \textup{\ref{claim2}}] This claim is exactly the same as \cite[Lemma~2.4]{DumPan24Perco}. For completeness, we reproduce the short argument. We first treat the case $\ell\geq 0$. Decomposing an open self-avoiding path from $0$ to $z$ according to the left-most vertex $y$ it visits, and using the BK inequality \eqref{eq:BK ineq}, gives, for every $z$ such that $z_1=-\ell$,
\begin{equation}\label{eq:pf claim1}
	\tau_{\mathbb H_k,p_c}(0,z)\leq \sum_{m=0}^{k-\ell}\sum_{y:\: y_1=-\ell-m}\tau_{\mathbb H_{\ell+m},p_c}(0,y)\tau_{\mathbb H_{\ell+m},p_c}(y,z).
\end{equation}
Summing \eqref{eq:pf claim1} over $z$ with $z_1=-\ell$, and using translation invariance, gives
\begin{align}
	\sum_{z: z_1=-\ell}\tau_{\mathbb H_k,p_c}(0,z)&\leq (1+k-\ell)\Big(\sup_{n\geq 0}\sum_{y: \: y_1=-n}\tau_{\mathbb H_n,p_c}(0,y)\Big)^2 \\&\lesssim (1+k-\ell) \Big(\sup_{n\geq 0}\varphi_{p_c}(\mathbb H_n)\Big)^2 \notag\lesssim 1+(k-\ell),
\end{align}
where the last inequality follows from Theorem \ref{thm:main}. In the case $\ell<0$, we replace \eqref{eq:pf claim1} by
\begin{equation}
	\tau_{\mathbb H_k,p_c}(0,z)\leq \sum_{m=0}^{k}\sum_{y:\: y_1=-\ell-m}\tau_{\mathbb H_{\ell+m},p_c}(0,y)\tau_{\mathbb H_{\ell+m},p_c}(y,z),
\end{equation}
and follow the same argument as above to get
\begin{equation}
	\sum_{z: z_1=-\ell}\tau_{\mathbb H_k,p_c}(0,z)\lesssim 1+k.
\end{equation}
This concludes the proof of the first claim.
\end{proof}

\begin{Claim}\label{claim1} Let $\varepsilon>0$. There exists $K_4=K_4(\varepsilon,d)>0$ such that the following holds. For every $K\geq K_4$, every $k\in \{0,\ldots,\lfloor \tfrac{L(p)}{2}\rfloor\}$,  every $j\in \{k,\ldots,L(p)-1\}$, every $u\in \partial \mathbb H_j$, and every $u$-super good set $A$ (and its associated $\mathbf{w}$), one has
\begin{equation}
\sum_{x:\:x_1=-k}\mathsf{E}_{j}(\mathbf{w},x;A)\leq \varepsilon.
	\end{equation}
\end{Claim}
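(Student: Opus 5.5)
We would begin by exchanging the sums and applying Claim~\ref{claim2}. By definition of $\mathsf E_j$,
\begin{equation*}
\sum_{x:\,x_1=-k}\mathsf E_j(\mathbf w,x;A)=\sum_{z\in A}\tau_{\mathbb H_{j+1},p}(\mathbf w,z)\sum_{x:\,x_1=-k}\tau_{\mathbb H_{j+1},p}(z,x).
\end{equation*}
Since $p\le p_c$, monotonicity lets us replace $p$ by $p_c$ in the inner sum. We then translate so that $z$ becomes the origin. The half-space $\mathbb H_{j+1}$ becomes $\mathbb H_{j+1+z_1}$, and the hyperplane $\{x_1=-k\}$ becomes $\{x_1=-(k+z_1)\}$. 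Claim~\ref{claim2} then bounds the inner sum by $\lesssim 1+(j+1+z_1)\wedge(j+1-k)\lesssim 1+(j+1+z_1)$. Here $j+1+z_1\ge0$ is exactly the distance from $z$ to the left boundary hyperplane $\{z_1=-(j+1)\}$, on which $\mathbf w$ lies (recall $\mathbf w_1=-(j+1)$).

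It therefore suffices to show that, for $K$ large,
\begin{equation*}
\sum_{z\in A}\tau_{\mathbb H_{j+1},p}(\mathbf w,z)\,\bigl(1+\mathrm{d}(z,\partial\mathbb H_{j+1})\bigr)\le \varepsilon/C.
\end{equation*}
We split dyadically, writing $A\cap(\Lambda_{2s}(\mathbf w)\setminus\Lambda_s(\mathbf w))$ for $s=2^\ell\ge K/10$. Escapability guarantees $A\cap\Lambda_{K/10}(\mathbf w)=\emptyset$, so only these annuli contribute. In the annulus at scale $s$ the weight is at most $1+2s$, and we treat each annulus in two parts.

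\textbf{Points outside $\mathcal V^j_{2s}(\mathbf w)$.} Here $\tau_{\mathbb H_{j+1},p}(\mathbf w,z)\le (2s)^{3/2-d}$. Regularity (with $T=20$) bounds the number of points of $A$ in the annulus by $\lesssim s^4(\log s)^7$, using $|\mathbf w-u|\le 2K$. This part therefore contributes at most $s^4(\log s)^7\cdot s\cdot s^{3/2-d}=s^{13/2-d}(\log s)^7$. This is summable over dyadic $s$ precisely when $d>6.5$, which holds for $d\ge7$.

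\textbf{Points inside $\mathcal V^j_{2s}(\mathbf w)$.} Here we only use $\tau_{\mathbb H_{j+1},p}(\mathbf w,z)\le\tau_{p_c}(\mathbf w,z)\lesssim s^{2-d}$. The super good property \eqref{eq:additional restriction}, applied with $y=\mathbf w$, bounds the number of such points by $20(2s)^{4-1/d}$. For this we need $\mathbf w\in\Lambda_K(u)$ and $\mathbf w_1=-(j+1)$, which hold up to adjusting $K$ by a constant factor. This part contributes $\lesssim s^{4-1/d}\cdot s\cdot s^{2-d}=s^{7-d-1/d}$, which is summable for $d\ge7$.

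Both dyadic series start at $s\ge K/10$, so their total is $\lesssim K^{-\delta}$ for some $\delta=\delta(d)>0$. Choosing $K\ge K_4(\varepsilon,d)$ makes the sum at most $\varepsilon$, uniformly in $k$, $j$, $u$ and $A$.

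The main obstacle is the second part, namely making sure the factor $s$ coming from Claim~\ref{claim2} is really compensated. In $d=7$, the crude bound $\tau_{p_c}\lesssim s^{2-d}$ together with $s^4$ points and weight $s$ gives exactly $s^{7-d}=s^0$, which is not summable. The gain of $s^{-1/d}$ from the definition of $\widetilde{\mathcal X}_j^K$ is what rescues this case. A further point to check is the scale bookkeeping: $\mathcal V^j_s(y)$ is defined with threshold $s^{3/2-d}$ at scale $s$, while we use annuli of radius $2s$, so we will apply the definition at scale $2s$ and absorb the resulting constant factors.
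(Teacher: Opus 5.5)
Your proposal is correct and follows the paper's proof essentially step by step. Claim~\ref{claim2}, after translation, gives the weight $1+(j+1+z_1)\lesssim 1+|z-\mathbf w|$, and the dyadic split of $A$ into points outside $\mathcal V^j$ (regularity plus the threshold $s^{3/2-d}$) and points inside $\mathcal V^j$ (the super good bound) yields $O(K^{-1/d})$. Two small bookkeeping remarks, which the paper also glosses over: for the few scales with $2s<K$, use the monotonicity $\mathcal V^j_{s}(y)\subset\mathcal V^j_{s'}(y)$ for $s\le s'$ to borrow the bound at scale $K$; and in the nearest-neighbour case $\mathbf w\in\Lambda_K(u)$ holds exactly (since $\mathbf v=u-\mathbf e_1$ and $\mathbf w_1=\mathbf v_1$), so no rescaling of $K$ is needed.
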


\begin{Rem} As discussed above, the result of Claim~\ref{claim1} holds for $u$-good sets $A$ in dimension $d>7$. It is only for the dimension $d=7$ that one needs to introduce the set $\widetilde {\mathcal X}_j^K$ and the notion of $u$-super good sets.  
\end{Rem}

\begin{proof}[Proof of Claim \textup{\ref{claim1}}] Recall that we have modified the definition of $K$-escapability in $\mathbb H_{j+1}$ so that $\mathbf{w}$ satisfies $\mathbf{w}_1=-(j+1)$. Moreover, by definition $\mathrm{d}(\mathbf{w},A)> K/10$. By translation invariance, if $z$ is such that $z_1\geq -(j+1)$,
\begin{align}
	\sum_{x:\: x_1=-k}\tau_{\mathbb H_{j+1},p}(z,x)=\sum_{\tilde{x}:\:\tilde{x}_1=-(k+z_1)\:}\tau_{\mathbb H_{j+1+z_1},p}(0,\tilde{x})&\lesssim 1+(j+1+z_1-(k+z_1))\wedge (j+1+z_1)\notag
	\\&\lesssim 1+(j+1+z_1),
\end{align}
where we used Claim \ref{claim2} in the first inequality.
As a result, we get  
\begin{align}\notag
	\sum_{x:\:x_1=-k}\mathsf{E}_{j}(\mathbf{w},x;A)&=\sum_{x:\:x_1=-k}\sum_{z\in A}\tau_{\mathbb H_{j+1},p}(\mathbf{w},z)\tau_{\mathbb H_{j+1},p}(z,x)\\&\lesssim \sum_{z\in A} \tau_{\mathbb H_{j+1},p}(\mathbf{w},z) \cdot [1+(j+1+z_1)]. 
\end{align}
Now, the fact that $A$ is $u$-super good yields similarly as for~\eqref{eq:pf main rep7}, 
\begin{align}
\notag  \sum_{z\in A} \tau_{\mathbb H_{j+1},p}(\mathbf{w},z)\cdot[1+(j+1+z_1)] & \lesssim \sum_{\ell \ge \log_2(K/10)} 2^\ell \cdot\Big(\frac{|A\cap \Lambda_{2^\ell}(\mathbf{w}) |}{2^{\ell(d-3/2)}} + \frac{|A\cap \mathcal V_{2^\ell}^j(\mathbf{w})|}{2^{\ell (d-2)}}\Big) \\
 &  \lesssim 
 \sum_{\ell \ge \log_2(K/10)} \Big(\frac{2^{5\ell} \ell^7}{2^{\ell(d-3/2)}} + \frac{2^{\ell(5 - 1/d)}}{2^{\ell(d-2)}} \Big) \lesssim \frac 1{K^{1/d}}. 
\end{align}
Since the implicit constants do not depend on $K$, it suffices to choose $K$ large enough to conclude the proof.
\end{proof}

Summing \eqref{eq:pf chi LB 2} over every $x$ with $x_1=-k$ (where we recall that $0\leq k \leq \lfloor \tfrac{L(p)}{2}\rfloor$), using Claim~\ref{claim1} for $\varepsilon>0$ to be fixed, and picking $K\geq K_4(\varepsilon,d)$ gives
\begin{equation}\label{eq:pf chi LB 3}
		c_1^{-1}\sum_{x:x_1 =-k}\tau_p(0,x)\geq \sum_{j=k}^{L(p)-1}\mathbb E_p\big[|\widetilde {\mathcal X}_j^K|\big]\cdot \Big(\sum_{y:\: y_1=-(j+1-k)}\tau_{\mathbb H_{j+1-k}}(0,y)-\varepsilon\Big).
\end{equation}
Observe that there exists $c_2=c_2(d)>0$ such that, for every $\ell\geq 0$, one has (recall that $p\in [p_c/2,p_c)$)
\begin{equation}
	\sum_{z:\:z_1=-\ell}\tau_{\mathbb H_\ell,p}(0,z)\geq c_2\cdot\varphi_p(\mathbb H_\ell).
\end{equation}
Using this and the definition of $L(p)$, we obtain for $k\leq j \leq L(p)-1$, 
\begin{equation}\label{eq:pf chi LB 4}
	\sum_{y: \:y_1=-(j+1-k)}\tau_{\mathbb H_{j+1-k}}(0,y) \geq c_2 \varphi_p(\mathbb H_{j+1-k})\geq \frac{c_2}{2d}\cdot\varphi_p(\Lambda_{j+1-k})\geq \frac{c_2}{2d}\cdot\frac{1}{2},
\end{equation}
where we used symmetry in the second inequality.
Choosing $\varepsilon=\tfrac{c_2}{8d}$ (which forces $K$ to be large enough), plugging \eqref{eq:pf chi LB 4} in \eqref{eq:pf chi LB 3}, and summing it over $0\leq k\leq \lfloor \tfrac{L(p)}{2}\rfloor$ yields
		\begin{equation}\label{lower.chi(p)1}
			\chi(p)\geq c_3 \cdot \inf_{j<L(p)}\mathbb E_p\big[|\widetilde {\mathcal X}_j^K|\big]  \cdot L(p)^2,
		\end{equation}
		where $c_3>0$ (and depends on $K,d$ only). In Lemma \ref{lem: bound mathcal X tilde} below, we prove that for $K$ even larger, there exists $c_4=c_4(K,d)>0$ such that
		\begin{equation}\label{eq:pf chi LB 5}
			\inf_{j<L(p)}\mathbb E_p\big[|\widetilde {\mathcal X}_j^K|\big]\geq c_4.
		\end{equation}
Combining \eqref{eq:pf chi LB 4} and \eqref{eq:pf chi LB 5} concludes the proof.		 
\end{proof}

We now turn to the proof of Lemma \ref{lem: bound mathcal X tilde}. Observe that a similar result was derived in Proposition \ref{prop: in average pioneers are regular and escapable} for the sets $\mathcal X_S^K$ with $S\subset \Lambda_{L(p)-1}$: letting $c_0=c_0(K,d)>0$ be given by the same result, and using the definition of $L(p)$,
\begin{equation}
	\mathbb E_p\big[|\mathcal X_S^K|\big]\geq c_0\cdot \mathbb E_p[\mathcal P_S]\gtrsim c_2\cdot \varphi_p(S)\geq \frac{c_0}{2}.
\end{equation}
\begin{Lem}\label{lem: bound mathcal X tilde} There exists $K_5>0$ such that, for every $K\geq K_5$, there exists $c=c(K,d)>0$ such that
\begin{equation}
				\inf_{j<L(p)}\mathbb E_p\big[|\widetilde {\mathcal X}_j^K|\big]\geq c.
\end{equation}
\end{Lem}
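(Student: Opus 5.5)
The plan is to follow the same two-step scheme as Proposition \ref{prop: in average pioneers are regular and escapable}. First, lower bound the expected number of pioneers of $\mathbb H_j$ that are regular in $\mathbb H_{j+1}$ and additionally satisfy the $\mathcal V$-restriction. Then upgrade these, by the local surgery of Lemma \ref{lem: regular are in average also escapable }, to elements of $\widetilde{\mathcal X}_j^K$.

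\textbf{Step 1: reduction to a finite set.} Since $\mathbb H_j$ is infinite, first pass to $S_n=\mathbb H_j\cap\Lambda_n$ (or work directly with $\mathbb H_j\cap \Lambda_{L(p)-1}$ translated appropriately). By the definition of $L(p)$ and symmetry, for $j<L(p)$ one has $\mathbb E_p[\mathcal P_{\mathbb H_j}]\gtrsim\varphi_p(\mathbb H_j)\ge \frac1{2d}\varphi_p(\Lambda_j)\ge\frac1{4d}$. Here one has to restrict to pioneers at distance $\lesssim L(p)$ from $0$, so that Lemma \ref{lem:KN} applies in the box exploration. I would therefore count only pioneers in $\partial\mathbb H_j\cap\Lambda_{L(p)}$, i.e.\ $S=\mathbb H_j\cap\Lambda_{L(p)}$, and note that the pioneers of this set lying on $\partial\mathbb H_j$ still have expectation $\gtrsim 1$ by the same symmetry argument.

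\textbf{Step 2: on average, pioneers satisfy the extra condition.} Run the proof of Lemma \ref{lem: pioneers are in average regular} with the local event $\mathcal T_s^{\rm loc}(u)$ strengthened by the requirement
\[
|\mathcal C(y;\Lambda_{s^{2d}}(u)\cap\mathbb H_{j+1})\cap\mathcal V_s^j(y')|\le s^{4-1/d}
\]
for all $y\in\Lambda_s(u)$ and all $y'\in\Lambda_K(u)$. Together with the bounded number of disjoint crossings, this again implies the global bound, with constant $(\log s)^3$ absorbed by taking $20$. The key new input is a tail estimate with stretched-exponential or at least summable decay, such as $s^{-3d^2}$, strong enough to beat the $s^{2d^2}$ union bound and the factor $e^{C(\log s)^2}$ from \eqref{eq:Xneps}.

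I would obtain it from a first-moment bound on $|\mathcal V_s^j(y)|$: summing $\tau_{\mathbb H_{j+1},p}(z,y)$ over $z\in\Lambda_s(y)$ is $\lesssim s\cdot\sup_n\varphi_{p_c}(\mathbb H_n)\lesssim s$ by Claim \ref{claim2}, applied slice by slice. Markov's inequality then gives $|\mathcal V_s^j(y)|\lesssim s\cdot s^{d-3/2}=s^{d-1/2}$. Next, take Aizenman-type moment bounds of high order $m$ for the number of cluster points in a deterministic set $V$ of size $s^{d-1/2}$ inside $\Lambda_s$, namely
\[
\mathbb E\big[|\mathcal C(y)\cap V|^m\big]\lesssim m!\,C^m\big(\max_{a}\textstyle\sum_{z\in V}\tau(a,z)\big)^{m-1}\textstyle\sum_z\tau(y,z)
\]
via tree-graph inequalities. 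The rearrangement inequality gives $\sum_{z\in V}\tau(a,z)\lesssim |V|^{2/d}\lesssim s^{2-1/d}$. Hence $|\mathcal C\cap V|$ is of order $s^{4-2/d}$, and exceeding $s^{4-1/d}$ has probability decaying like $\exp(-cs^{1/(2d)})$. This is the step I expect to be the main obstacle: making the tree-graph moment computation precise for the restricted cluster, and checking that the exponents indeed leave a polynomial gain of $s^{1/d}$.

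\textbf{Step 3: surgery.} Apply the map $\varphi$ of Lemma \ref{lem: regular are in average also escapable }, modified as in Remark \ref{rem:comparing mathcal X to pioneers} so that $\mathbf v_1=\mathbf w_1=-(j+1)$ and $\boldsymbol\gamma\subset\mathbb H_{j+1}$. The modification changes the cluster by at most $O(K)$ points inside $\Lambda_K(u)$. It therefore preserves the $\mathcal V$-condition, up to replacing $s^{4-1/d}$ by $20s^{4-1/d}$ and shifting $y\in\Lambda_K(u)$ to $\Lambda_{2K}$, which is handled by requiring the local condition for $y'\in\Lambda_{2K}(u)$ in Step 2.

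The bounded-to-one and bounded-cost properties of $\varphi$ then give
\[
\mathbb E_p\big[|\widetilde{\mathcal X}_j^K|\big]\ge c(K)\cdot\tfrac12\,\mathbb E_p[\mathcal P]\ge c,
\]
uniformly in $j<L(p)$.
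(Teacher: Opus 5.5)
Your plan is the paper's proof with the steps in a different order. The ingredients match: the local surgery (modified as in Remark~\ref{rem:comparing mathcal X to pioneers}) to gain escapability; the box exploration of Lemma~\ref{lem: pioneers are in average regular} with $\mathcal T_s^{\textup{loc}}$ strengthened by a local $\mathcal V$-condition; the deterministic bound $|\mathcal V_s^j(y)|\lesssim s^{d-1/2}$ via Claim~\ref{claim2}; and a tail bound for $|\mathcal C\cap V|$ on the scale $|V|^{4/d}=s^{4-2/d}$. However, three steps fail as written, although each is repairable.

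First, the factor $(\log s)^3$ is not a constant and cannot be absorbed into $20$. It counts the local clusters that can merge via disjoint crossings to $\partial\Lambda_{s^{2d}}(u)$. With local threshold $s^{4-1/d}$ you only get $|\mathcal C(u)\cap\mathcal V_s^j(y')|\le(\log s)^3 s^{4-1/d}$. Instead, put $s^{4-1/d}(\log s)^{-3}$ in the local event, as the paper does; the room $s^{1/d}$ in the tail bound pays for this.

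Second, your displayed moment bound is not what the tree-graph inequality gives. Each additional leaf comes with a summed branching point, so the per-leaf factor is $\sup_a\sum_{z\in V}\sum_b\tau(a,b)\tau(b,z)\lesssim\sup_a\sum_{z\in V}(1\vee|a-z|)^{4-d}\lesssim|V|^{4/d}$, not $\max_a\sum_{z\in V}\tau(a,z)\lesssim|V|^{2/d}$. Taken at face value, your formula would put $|\mathcal C\cap V|$ on the scale $|V|^{2/d}$, which is inconsistent with the (correct) conclusion $s^{4-2/d}$ you then draw. The correct statement is exactly Theorem~\ref{thm: asselah schapira}, and the restricted cluster is no obstacle. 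Since $\mathcal C(z;\Lambda_{s^{2d}}(u)\cap\mathbb H_{j+1})\subset\mathcal C(z)$ and the event is increasing, you may apply it to the unrestricted cluster at $p_c$. This yields decay $e^{-cs^{1/d}(\log s)^{-3}}$. Note also that polynomial decay such as $s^{-3d^2}$, which you mention as sufficient, would not beat the factor $e^{C(\log s)^2}$.

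Third, with $S=\mathbb H_j\cap\Lambda_{L(p)}$ the exploration only bounds the bad pioneers by $\varepsilon(K)\,\mathbb E_p[\mathcal P_S]$. Here $\mathcal P_S$ counts pioneers on all faces of $S$, whereas only those on $\{u_1=-j\}$ contribute to $\widetilde{\mathcal X}_j^K$. So your final chain $\ge c(K)\tfrac12\mathbb E_p[\mathcal P]$ does not follow. You must add $\mathbb E_p[\mathcal P_S]\le p^{-1}\varphi_p(S)\le p^{-1}\varphi_{p_c}(S)\lesssim 1$ by Theorem~\ref{thm:main}. Alternatively, as in the paper, take $S=\Lambda_j$ and use symmetry: the face $\{u_1=-j\}$ then carries at least a $1/(2d)$ fraction of $\mathbb E_p[\mathcal P_{\Lambda_j}]\gtrsim 1$, and the bad pioneers are at most a $1/(4d)$ fraction.
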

To prove Lemma \ref{lem: bound mathcal X tilde}, we rely on the following result from \cite{asselah2024intersection}.  
\begin{Thm}[\hspace{1pt}{\cite[Corollary~1.4]{asselah2024intersection}}]\label{thm: asselah schapira} Let $d>6$ and assume that the upper bound in \eqref{eq: 2pt full space estimate} holds. There exists $c,C>0$ such that, for every finite $V\subset \mathbb Z^d$ and every $t>0$,
\begin{equation}
	\mathbb P_{p_c}\Big[|\mathcal C(0)\cap V|\geq t|V|^{4/d}\Big]\leq C\exp(-ct).
\end{equation}
\end{Thm}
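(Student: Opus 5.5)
The plan is to prove an exponential moment bound by controlling all integer moments of $X:=|\mathcal C(0)\cap V|$ with the Aizenman--Newman tree-graph inequalities \cite{AizenmanNewmanTreeGraphInequalities1984}, and then to apply an exponential Chebyshev bound. Precisely, I aim to show that there is $C_0=C_0(d)$ with
\begin{equation*}
\mathbb E_{p_c}[X^k]\le k!\,\big(C_0|V|^{4/d}\big)^k\qquad\text{for every } k\ge 1 .
\end{equation*}
Given this, take $\lambda:=(2C_0|V|^{4/d})^{-1}$. Then $\mathbb E_{p_c}[e^{\lambda X}]\le\sum_k 2^{-k}\le 2$, and Markov's inequality gives $\mathbb P_{p_c}[X\ge t|V|^{4/d}]\le 2e^{-t/(2C_0)}$, which is the claim.

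\emph{Step 1: tree-graph bound.} Writing $X^k=\sum_{y_1,\dots,y_k\in V}\mathds 1\{0\connect{}y_1,\dots,0\connect{}y_k\}$ and applying BK repeatedly, the probability that $0,y_1,\dots,y_k$ all lie in one cluster is at most a sum over labelled trees. Each such tree has leaves $0,y_1,\dots,y_k$, at most $k-1$ internal vertices of degree three summed over $\mathbb Z^d$, and a factor $\tau_{p_c}$ on each edge. Degenerate cases, where internal vertices coincide with leaves or with each other, are absorbed into the same sum because $\tau_{p_c}(a,a)=1$. The number of tree shapes is at most $(2k-3)!!\le 2^k k!$.

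\emph{Step 2: two deterministic estimates using only the upper bound in \eqref{eq: 2pt full space estimate}.} The first estimate is
\begin{equation*}
\sup_a\sum_{y\in V}\tau_{p_c}(a,y)\lesssim|V|^{2/d}.
\end{equation*}
This holds because the sum is maximised, up to constants, when $V$ is a ball of radius $|V|^{1/d}$ around $a$. The second estimate is
\begin{equation*}
\sup_a\sum_{z}\sum_{y\in V}\tau_{p_c}(a,z)\tau_{p_c}(z,y)\lesssim|V|^{4/d}.
\end{equation*}
It follows from the convolution bound $\sum_z|a-z|^{2-d}|z-y|^{2-d}\lesssim|a-y|^{4-d}$, valid since $d>4$, and the same rearrangement argument. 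Since $|V|\ge 1$, we also have $|V|^{2/d}\le|V|^{4/d}$.

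\emph{Step 3: induction by leaf removal.} I will integrate out the leaves one at a time. Choose a leaf $y$ attached to an internal vertex $z$ whose other two neighbours are $a$ and $b$. The relevant factor is $\tau(a,z)\tau(z,b)\tau(z,y)$, summed over $z\in\mathbb Z^d$ and $y\in V$. Here I use the quasi-triangle inequality
\begin{equation*}
\tau_{p_c}(a,z)\,\tau_{p_c}(z,b)\lesssim\tau_{p_c}(a,b)\big(\tau_{p_c}(a,z)+\tau_{p_c}(z,b)\big),
\end{equation*}
which holds because $|a-b|\le 2\max(|a-z|,|z-b|)$ together with \eqref{eq: 2pt full space estimate}. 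Combined with the second estimate of Step 2, removing this leaf contracts the tree to one with the single edge $\tau(a,b)$ and produces a factor at most $C|V|^{4/d}$. Iterating, the final leaf connected to $0$ contributes $\sum_{y\in V}\tau(0,y)\lesssim|V|^{2/d}$. Each tree is therefore bounded by $(C|V|^{4/d})^k$, and multiplying by the count of tree shapes from Step 1 gives the moment bound.

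\emph{Main obstacle.} I expect the main difficulty to be the careful bookkeeping in the tree-graph expansion. One must check that every tree, including degenerate ones in which internal vertices merge with leaves or with $0$, can be reduced by this leaf-removal procedure, each removal costing only $O(|V|^{4/d})$. The quasi-triangle step is the crucial point: it is what avoids the naive bound $\sum_z\tau(0,z)T(z)^2$, which would diverge.
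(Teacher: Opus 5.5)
Your proposal is correct and follows the route the paper points to: the statement is imported from \cite{asselah2024intersection} as a short consequence of the Aizenman--Newman tree-graph inequalities, which is exactly your moment bound $\mathbb E_{p_c}[X^k]\le k!\,(C_0|V|^{4/d})^k$ followed by an exponential Chebyshev inequality. One small repair is needed: your quasi-triangle step uses the lower bound in \eqref{eq: 2pt full space estimate}, which is not assumed here, so first majorise every $\tau_{p_c}$ by $G(x):=(1\vee|x|)^{2-d}$ and run the leaf removal for $G$, for which $G(a-z)G(z-b)\le 2^{d-2}G(a-b)\big(G(a-z)+G(z-b)\big)$ holds deterministically.
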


\begin{Rem} Theorem \ref{thm: asselah schapira} is a short and elementary consequence of the tree-graph inequalities of Aizenman and Newman \cite{AizenmanNewmanTreeGraphInequalities1984}.
\end{Rem}
\begin{proof}[Proof of Lemma~\textup{\ref{lem: bound mathcal X tilde}}] We make small modifications to the arguments used to derive Proposition \ref{prop: in average pioneers are regular and escapable}, i.e.\ we obtain the appropriate versions of Lemmas \ref{lem: regular are in average also escapable } and \ref{lem: pioneers are in average regular}. Below we fix $0\leq j<L(p)$. The proof is split into three steps. 

\vspace{5pt}

$\bullet$ First, we remove the restriction on the $K$-escapability. Using the exact same argument\footnote{Importing the notations around \eqref{eq:comparing cluster omega to cluster phi(omega)}, we rely on the fact that, if $K$ is large enough, for $s\geq K$ and $y\in \Lambda_K(u)$ with $y_1=-(j+1)$,
\begin{equation*}
	|\mathcal C_{\varphi(\omega)}(u)\cap \mathcal V_s^j(y)|\leq |\mathcal C_{\omega}(u)\cap \mathcal V_{2s}^j(y)|+dK\leq (2s)^{4-\tfrac{1}{d}}+dK\leq 20s^{4-\tfrac{1}{d}}.
\end{equation*}
} as the one used in Lemma \ref{lem: regular are in average also escapable }, we obtain that for every $K$ large enough, there exists $c_1=c_1(K,d)>0$ such that,
\begin{equation}\label{eq:pf lem lb 1}
	\mathbb E_p\big[|\widetilde{\mathcal X}^K_j|\big]\geq c_1 \mathbb E_p\big[\widetilde{\mathcal P}_{j}^K\big],
\end{equation}
where $\widetilde{\mathcal P}_{j}^K$ is the cardinality of the set of pioneers of $\mathbb H_j$ that are $(K,1)$-regular (in $\mathbb Z^d$) and which satisfy 
\begin{equation}\label{eq:pf lem lb 2}
|\mathcal C(u) \cap \mathcal V_s^j(y)| \le s^{4-\frac 1d}, \quad \forall s\ge K, \ \forall y\in \Lambda_K(u), \: y_1=-(j+1).
\end{equation}
We stress that this is the place where we use that the local surgery of the proof of Lemma \ref{lem: regular are in average also escapable } is sufficiently robust, see Remark \ref{rem:comparing mathcal X to pioneers}.

\vspace{5pt}

$\bullet$ Second, we observe that, by symmetry,
\begin{equation}\label{eq:pf lem lb 1.4}
	\mathbb E_p\big[\widetilde{\mathcal P}_j^K\big]\geq \frac{1}{2d}\mathbb E_p\big[\widetilde{Y}^K_{j}\big],
\end{equation}
where $\widetilde{Y}_j^K$ is the cardinality of the set of pioneers of $\Lambda_j$ that are $(K,1)$-regular, and which satisfy $u_1=-j$ (or $u_1\in \{-j,\ldots,-j+L-1\}$ in the spread-out case) and also \eqref{eq:pf lem lb 2}.

\vspace{5pt}

$\bullet$ Finally, we prove that for $K$ potentially even larger,
\begin{equation}\label{eq:pf lem lb 3}
	\mathbb E_p\big[\widetilde{Y}^K_j\big]\geq \frac{1}{4d}\mathbb E_p\big[\mathcal P_{\Lambda_j}\big].
\end{equation}
This is enough to conclude the proof. Indeed, combining \eqref{eq:pf lem lb 1}, \eqref{eq:pf lem lb 1.4}, and \eqref{eq:pf lem lb 3} gives, for $K$ large enough,
\begin{equation}
	\mathbb E_p\big[|\widetilde{\mathcal X}^K_j|\big]\geq c_1 \mathbb E_p\big[\widetilde{\mathcal P}_{j}^K\big]\geq \frac{c_1}{8d^2}\mathbb E_p[\mathcal P_{\Lambda_j}]\gtrsim  c_1\varphi_p(\Lambda_j)\geq  \frac{c_1}{2},
\end{equation}
where the third inequality follows from \eqref{eq:comparison pioneers and phi}, and the fourth one follows from the assumption $j<L(p)$.
To prove \eqref{eq:pf lem lb 3}, we reproduce the argument of Lemma \ref{lem: pioneers are in average regular}. Recall the definition of $\mathcal T_s^{\textup{loc}}(u)$ from \eqref{eq:def ts loc}. The main difference is that a point $u$ is now called \emph{$s$-locally super bad} if $\mathcal T_s^{\textup{loc}}(u)$ does not occur \textbf{or} if $\mathcal T_s^{\textup{loc}}(u)$ occurs but there exists $y\in \Lambda_K(u)$ and $z\in \Lambda_{s^{2d}}(u)$ such that
\begin{equation}
	|\mathcal C(z;\Lambda_{s^{2d}}(u))\cap \mathcal V_s^j(y)|>s^{4-\frac{1}{d}}(\log s)^{-3}.
\end{equation}
As in Lemma \ref{lem:Tsloc.Ts}, if for every $s\geq K$, $u$ is not $s$-locally super bad, then, $u$ is $(K,1)$-regular and it satisfies \eqref{eq:pf lem lb 2}.  As a result,
\begin{equation}
	\mathbb E_p\big[\widetilde{Y}^K_j\big]\geq \frac{1}{2d}\mathbb E_p[\mathcal{P}_{\Lambda_j}]-\sum_{s\geq K}\mathbb E_p\big[\mathcal{P}_{\Lambda_j}^{s\textup{-loc-super-bad}}\big],
\end{equation}
where we used symmetries of the model to get that (recall that $L=1$ in the nearest-neighbour case) 
\begin{equation}
\sum_{\substack{u\in \partial \Lambda_j\\u_1\in \{-j,\ldots,-j+L-1\}}}\tau_{\Lambda_j,p}(0,u)\geq \frac{1}{2d}\mathbb E_p[\mathcal{P}_{\Lambda_j}],
\end{equation}
and where we wrote $\mathcal{P}_{\Lambda_j}^{s\textup{-loc-super-bad}}$ for the number of $s$-locally super bad pioneers of $\Lambda_j$.
The proof of Lemma \ref{lem: pioneers are in average regular} adapts once we prove an upper bound of the form $e^{-c(\log s)^4}$ on the probability of $u$ being $s$-locally super-bad. By Lemma \ref{lem:Tsloc}, there exists $c_2=c_2(d)>0$, such that
\begin{align}
	\mathbb P_p[u \textup{ is}& \textup{ $s$-locally super bad}]\notag
	\\&\leq e^{-c_2(\log s)^4}+|\Lambda_{s^{2d}}(u)|^2\cdot \max_{\substack{y\in \Lambda_K(u)\\z\in \Lambda_{s^{2d}}(u)}}\mathbb P_p\Big[|\mathcal C(z;\Lambda_{s^{2d}}(u))\cap \mathcal V_s^j(y)|>s^{4-\frac{1}{d}}(\log s)^{-3}\Big].\label{eq:pf lem lb 4}
\end{align}
We fix $y\in \Lambda_K(u)$ and $z\in \Lambda_{s^{2d}}(u)$, and analyse the last probability in \eqref{eq:pf lem lb 4} using Theorem \ref{thm: asselah schapira}. There exist $c_3,C_1>0$ (which only depend on $d$) such that, for any finite set $V$ and any $t>0$, 
	\begin{equation}\label{eq:pf lem lb 5}
	\mathbb P_{p_c}\Big[|\mathcal C(z)\cap V|\ge t |V|^{4/d}\Big] \le C_1\exp(-c_3t). 
	\end{equation}
	To use this result in our context, we just need an upper bound on $|\mathcal V_s^j(y)|$. By definition of $\mathcal V_s^j(y)$,   
	\begin{equation} 
	\sum_{b \in \mathcal V_s^j(y)} \tau_{\mathbb H_{j+1},p}(y,b) \ge s^{ \frac 32-d} \cdot |\mathcal V_s^j(y)|, 
	\end{equation}
	and by Theorem~\ref{thm:main} and translation invariance, 
		\begin{equation} 
	\sum_{b \in \mathcal V_s^j(y)} \tau_{\mathbb H_{j+1},p}(y,b) \le \sum_{b \in \Lambda_s(y)} \tau_{\mathbb H_{j+1},p}(y,b)\lesssim \sum_{k=0}^s \varphi_p(\mathbb H_k) \lesssim 1+s. 
	\end{equation}
	Combining the last two displays yields the deterministic bound $|\mathcal V_s^j(y)|\leq C_2 s^{d-1/2}$ for some $C_2=C_2(d)>0$. Applying \eqref{eq:pf lem lb 5} to $V=\mathcal V^j_s(y)$, and letting $t_0=C_2^{-4/d}s^{1/d}(\log s)^{-3}$ gives
	\begin{equation}\label{eq:pf lem lb 6}
	\mathbb P_{p_c}\Big[|\mathcal C(z;\Lambda_{s^{2d}}(u))\cap \mathcal V_s^j(y)|\geq s^{4-\frac 1d}(\log s)^{-3}\Big]\leq \mathbb P_{p_c}\Big[|\mathcal C(z)\cap \mathcal V_s^j(y)|\geq t_0|\mathcal V_s^j(y)|^{4/d}\Big]\leq e^{-c_3 t_0}.
	\end{equation}
	Combining \eqref{eq:pf lem lb 4} and \eqref{eq:pf lem lb 6} gives $c_4,c_5>0$ (which only depend on $d$) such that,
	\begin{equation}
		\mathbb P_p[u \textup{ is} \textup{ $s$-locally super bad}]\lesssim e^{-c_2(\log s)^4}+ (s^{2d^2})^2 e^{-c_4 s^{1/d}(\log s)^{-3}}\lesssim e^{-c_5(\log s)^4}.
	\end{equation}
This concludes the proof.
\end{proof}

The upper bounds in Theorems~\ref{thm:nc bounds phi(S)} and~\ref{thm:bounds 2pt} 
follow easily by combining Lemma~\ref{lem:SL} and Theorems~\ref{thm:main} and \ref{thm:sharplength}.

\begin{proof}[Proof of the upper bound in Theorem~\textup{\ref{thm:nc bounds phi(S)}}] Let $p\in [p_c/2,p_c)$. By Theorem \ref{thm:sharplength}, it suffices to prove the existence of $c_1,C_1>0$ (which only depend on $d$) such that, for every $S\subset \mathbb Z^d$ containing $0$,
\begin{equation}
	\varphi_p(S)\leq C_1\exp\Big(-c_1\frac{\mathrm{d}(0,\partial S)}{L(p)}\Big).
\end{equation}
 Fix $S\subset \mathbb Z^d$ containing $0$. We may assume that $r:=\textup{d}(0,\partial S)\geq 10(L(p)+L)$ (where $L=1$ in the nearest-neighbour case), otherwise the result follows from Theorem \ref{thm:main}. Fix $S_0\subset \Lambda_{L(p)}$ such that $\varphi_p(S_0)<\tfrac{1}{2}$. Applying Lemma \ref{lem:SL} to $x\in \partial S$, multiplying by $p$, and summing over $y\notin S$ with $x\sim y$, and then over $x$ gives
 
 \begin{equation}
\varphi_p(S) \leq \sum_{\substack{x\in S\\y\notin S\\ x\sim y}}  
\sum_{\substack{u_1\in S_0\\ v_1\notin S_0 \\ u_1\sim v_1}} \tau_{S_0,p}(0,u_1)\cdot p \cdot  
\tau_{S,p}(v_1,x) \cdot p.
\end{equation}
 Iterating this procedure $k=\lfloor \frac{r}{2(L(p)+L)}\rfloor$ times with translates of $S_0$ gives
 \begin{multline}\label{eq:iterated-phi}
	\varphi_p(S)\leq  \sum_{\substack{x\in S\\y\notin S\\ x\sim y}}   
	\sum_{\substack{u_1\in S_0\\ v_1\notin S_0\\ u_1\sim v_1}} \tau_{S_0,p}(0,u_1)\cdot p
	\ldots\sum_{\substack{u_{k}\in (S_0+v_{k-1})\\ v_{k}\notin (S_0+v_{k-1}) \\ u_k\sim v_k}} \tau_{S_0+v_{k-1},p}(v_{k-1},u_k)\cdot p\cdot \tau_{S,p}(v_k,x)\cdot p
	\\\leq \varphi_p(S_0)^k \cdot \sup_{z\in S}\varphi_p(S-z)\lesssim 2^{-k}\end{multline} 
where the last inequality follows from Theorem \ref{thm:main}. This concludes the proof.
\end{proof}

\begin{proof}[Proof of the upper bound in Theorem~\textup{\ref{thm:bounds 2pt}}] Let $p\in [p_c/2,p_c)$. Again, by Theorem \ref{thm:sharplength}, it suffices to prove the existence of $c_1,C_1>0$ (which only depend on $d$) such that, for every $x\in \mathbb Z^d$,
\begin{equation}
	\tau_p(0,x)\leq \frac{C_1}{(1\vee |x|)^{d-2}}\exp\Big(-c_1\frac{|x|}{L(p)}\Big).
\end{equation}
We may (again) assume that $|x|\geq 10(L(p)+L)$, otherwise the result follows from $\tau_p(0,x)\leq \tau_{p_c}(0,x)$ and \eqref{eq: 2pt full space estimate}. Fix $S_0\subset \Lambda_{L(p)}$ such that $\varphi_p(S_0)<\tfrac{1}{2}$. Iterating Lemma \ref{lem:SL} $k=\lfloor \frac{|x|}{2(L(p)+L)}\rfloor$ times, exactly as above, gives
\begin{equation}
	\tau_p(0,x)\leq \varphi_p(S_0)^k\sup_{z\in \Lambda_{kL(p)}}\tau_p(z,x)\lesssim \frac{2^{-k}}{|x|^{d-2}},
\end{equation}
where in the last inequality we used \eqref{eq: 2pt full space estimate} and the fact that (by definition of $k$) one has $kL(p)\leq |x|/2$. This concludes the proof.
\end{proof}

\section{Derivation of the one-arm exponent}\label{sec:one arm}

We now turn to the proof of Theorem \ref{thm:onearm}. It follows from a combination of Theorems \ref{thm:sharplength} and \ref{thm:nc bounds phi(S)} and Corollary \ref{coro:bounded derivative}. We begin with a lemma, which provides a proof of \eqref{eq:theta drops intro}. Recall that $\theta_n(p)=\mathbb P_p[0\connect{}\partial \Lambda_n]$.
\begin{Lem}\label{lem:theta drops} Let $d>6$ and assume that \eqref{eq: 2pt full space estimate} holds. There exists $K>0$ such that, for every $n$ large enough, 
\begin{equation}
	\theta_n(p_c-K/n^2)\leq \frac{1}{10}\cdot \theta_{\lfloor n/2\rfloor}(p_c).
\end{equation}
\end{Lem}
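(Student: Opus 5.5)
We plan to use the Simon--Lieb inequality (Lemma~\ref{lem:SL}) in its ``one-arm'' form, together with the upper bound of Theorem~\ref{thm:nc bounds phi(S)} and the sharp length estimate of Theorem~\ref{thm:sharplength}. Set $p=p_c-K/n^2$ and $m=\lfloor n/2\rfloor$. By Theorem~\ref{thm:sharplength}, $L(p)\leq C(p_c-p)^{-1/2}=Cn/\sqrt K$, so by taking $K$ large the sharp length is a small fraction of $n$.

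First step: decompose the event $\{0\connect{}\partial\Lambda_n\}$ according to the cluster of the origin inside $\Lambda_m$. Let $\mathcal S_m=\{x\in\Lambda_m: x\text{ not connected to }\partial\Lambda_m\}$ as in \eqref{eq:derivative of theta}. On $\{0\connect{}\partial\Lambda_n\}$ we certainly have $0\connect{}\partial\Lambda_m$. Conditioning on the set $\mathcal C$ of vertices of $\Lambda_m$ connected to $0$ inside $\Lambda_m$, and exploring it, each point $y\in\partial\Lambda_m\cap\mathcal C$ must then be continued to $\partial\Lambda_n$. Rather than working with this random set, we apply the BK-type bound in the form used for Lemma~\ref{lem:SL}: any open path from $0$ to $\partial\Lambda_n$ leaves $\Lambda_m$ via a first edge $\{u,v\}$, and then from $v$ must travel distance at least $n-m\geq n/3$. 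Its continuation from $v$ is itself a path exiting the box $\Lambda_{n/3}(v)$. So we iterate Lemma~\ref{lem:SL} on the continuation in the region outside $\Lambda_m$: with $S=\Lambda_{n/3}(v)$ (or a suitable translate) and the event ``$v\connect{}\partial\Lambda_{n/3}(v)$'', we obtain
\begin{equation*}
\theta_n(p)\leq \mathbb P_p\big[0\connect{}\partial\Lambda_m\big]\cdot\sup_{v}\,\varphi_p(\Lambda_{n/3}(v)-v)\cdot C',
\end{equation*}
where the extra factor comes from the last step (bounding the final connection probability by $1$) and the first factor is bounded by $\theta_m(p_c)$ by monotonicity in $p$. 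More carefully: via BK, $\mathbb P_p[0\connect{}\partial\Lambda_n]\leq\sum_{u\in\partial\Lambda_m,v\notin\Lambda_m}\tau_{\Lambda_m,p}(0,u)\,p\,\mathbb P_p[v\connect{}\partial\Lambda_n]$, which is awkward because the sum over $u$ is $\varphi_p(\Lambda_m)$, not $\theta_m$. So instead we plan to keep the first factor as a probability: on $\{0\connect{}\partial\Lambda_n\}$, there is a disjoint occurrence of $\{0\connect{}\partial\Lambda_m\}$ and of ``a point $y\in\partial\Lambda_m$ connected to distance $n/3$'' --- but this union over $y$ costs a sum. The cleaner route: use the explicit version $\theta_n(p)\leq \mathbb E_p\big[\mathds 1\{0\in\mathcal S'\}\cdots\big]$ via the exploration of $\mathcal C(0;\Lambda_m)$, giving
\begin{equation*}
\theta_n(p)\leq \mathbb E_p\Big[\mathds 1\{0\connect{}\partial\Lambda_m\}\sum_{y\in\mathcal C(0;\Lambda_m)\cap\partial\Lambda_m}\sum_{z\notin\Lambda_m,z\sim y} p\,\theta_{n/3}(p)\Big].
\end{equation*}
This is still a sum of pioneers, so the key is to control $\mathbb E_p[\#\text{pioneers}\mid 0\connect{}\partial\Lambda_m]$.

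The honest plan therefore is: choose an intermediate scale. Write the path as first reaching $\partial\Lambda_m$, and then having to cross the annulus $\Lambda_n\setminus\Lambda_m$ of width $\asymp n\gg L(p)$. Apply Lemma~\ref{lem:SL} to the sub-event after the last visit decomposition with $S=$ the region $\{x:\ \mathrm d(x,\Lambda_m)<n/3\}$, seen from the exploration of $\mathcal C(0;\Lambda_m)$: conditionally on this exploration, the probability of escaping distance $n/3$ further is at most $\sum_{y}$ over revealed boundary points of $\varphi_p$ of the translate of $S$, and Theorem~\ref{thm:nc bounds phi(S)} (upper bound) gives $\varphi_p(S-y)\leq C e^{-c\sqrt K/3}$ for each. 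The main obstacle is exactly the multiplicity of exit points $y$: I expect to handle it by noting that the sum $\sum_{y}$ over exit points weighted by the escape probabilities is itself bounded via Theorem~\ref{thm:main} (the pioneers of the set ``$\Lambda_m$ enlarged'' have bounded expected number given the arm, using the reversed inequality Proposition~\ref{prop:reversed SL} and $\theta_m(p_c)\gtrsim$ the conditional mass), yielding $\theta_n(p)\leq C\,e^{-c\sqrt K}\theta_m(p_c)$, and then choosing $K$ so that $Ce^{-c\sqrt K}\leq1/10$.
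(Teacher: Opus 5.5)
There is a genuine gap in the route you finally settle on. You bound each escape probability from an exit point $y$ by $\varphi_p$ of a translate of $S$; this is where your factor $e^{-c\sqrt K}$ comes from. You then hope to recover $\theta_m(p_c)$ from the multiplicity of exit points via Theorem~\ref{thm:main} or Proposition~\ref{prop:reversed SL}. This cannot work.

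The expected number of pioneers of $\Lambda_m$ is comparable to $\varphi_p(\Lambda_m)$ by \eqref{eq:comparison pioneers and phi}. At $p_c$ this is of order $1$ (by Theorem~\ref{thm:main} and $\varphi_{p_c}\geq 1$), whereas $\theta_m(p_c)\asymp m^{-2}$. Equivalently, on $\{0\connect{}\partial\Lambda_m\}$ there are typically of order $m^2$ exit points. So the conditional expected number of pioneers is large, not bounded, and no inequality of the form ``pioneer mass $\lesssim\theta_m(p_c)$'' is available. In your scheme both factors end up being $\varphi$-type quantities, and $\theta_{\lfloor n/2\rfloor}(p_c)$ never appears.

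The missing observation is that the BK display you discarded as ``awkward'' already is the proof; you only misread which factor carries what. Take $B:=\Lambda_{\lfloor n/2\rfloor-L}$. Decompose at the first edge $\{u,v\}$ by which an open path leaves $B$ and apply BK. This gives $\theta_n(p)\leq\sum_{u\in B,\,v\notin B,\,u\sim v}\tau_{B,p}(0,u)\,p\,\mathbb P_p[v\connect{}\partial\Lambda_n]$.

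The sum of $\tau_{B,p}(0,u)\,p$ is exactly $\varphi_p(B)$, and it is the \emph{small} factor. The upper bound of Theorem~\ref{thm:nc bounds phi(S)}, with $\mathrm{d}(0,\partial B)\asymp n$ and $(p_c-p)^{1/2}=\sqrt K/n$, gives $\varphi_p(B)\leq Ce^{-c\sqrt K}$.

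The one-arm factor comes from the continuation, not from the first leg. Since $|v|\leq\lfloor n/2\rfloor$, any path from $v$ to $\partial\Lambda_n$ must hit $\partial\Lambda_{\lfloor n/2\rfloor}(v)$. By translation invariance and monotonicity, $\mathbb P_p[v\connect{}\partial\Lambda_n]\leq\theta_{\lfloor n/2\rfloor}(p_c)$ uniformly in $v$.

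This is also why the inner box must have radius $\lfloor n/2\rfloor-L$ rather than leaving only distance $n/3$. With $n/3$ you would get $\theta_{n/3}(p_c)\geq\theta_{\lfloor n/2\rfloor}(p_c)$, which cannot be compared to the right-hand side a priori. Choosing $K$ with $Ce^{-c\sqrt K}\leq 1/10$ concludes. Theorem~\ref{thm:sharplength} need not be invoked separately, since it is already built into Theorem~\ref{thm:nc bounds phi(S)}.
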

\begin{proof} Set $B:=\Lambda_{\lfloor n/2\rfloor-L}$ (where $L$ is the spread parameter of the model, equal to $1$ in the nearest-neighbour case). Then,
\begin{equation}
	\{0\connect{}\partial \Lambda_n\}\subset \bigcup_{\substack{u\in B\\ v\notin B\\u \sim v}}\{0\connect{B\:}u\}\circ \{\{u,v\}\textup{ is open}\}\circ \{v\connect{} \partial \Lambda_n\}.
\end{equation}
Applying the BK inequality \eqref{eq:BK ineq}, gives, for $p\leq p_c$,
\begin{equation}
	\theta_n(p)\leq \varphi_p(B)\cdot \theta_{\lfloor n/2\rfloor}(p_c).
\end{equation}
Now, using Theorem \ref{thm:nc bounds phi(S)} with $p=p_c-K/n^2$ gives the existence of $c,C>0$ such that, for every $n$ large enough,
\begin{equation}
	\varphi_p(B)\leq C\exp(-cK^{1/2}).
\end{equation}
Choosing $K$ large enough so that $\varphi_p(B)\leq \tfrac{1}{10}$ concludes the proof.
\end{proof}

\begin{proof}[Proof of Theorem~\textup{\ref{thm:onearm}}] We first prove the upper bound in Theorem \ref{thm:onearm}. By Corollary \ref{coro:bounded derivative}, there exists $C_1>0$ such that, for every $p\in (p_c/2,p_c]$, and every $n\geq 1$,
\begin{equation}\label{eq:pf 1arm 1}
	\frac{\mathrm{d}}{\mathrm{d}p}\theta_n(p)\leq C_1.
\end{equation}
Let $K>0$ be given by Lemma \ref{lem:theta drops}. 
Integrating \eqref{eq:pf 1arm 1} between $p:=p_c-K/n^2$ and $p_c$ yields, for $n$ large enough (so that $p>p_c/2$),
\begin{equation}\label{eq:pf 1arm 2}
	\theta_n(p_c)\leq \theta_n(p)+\frac{KC_1}{n^2}.
\end{equation}
Using Lemma \ref{lem:theta drops} in \eqref{eq:pf 1arm 2} yields the existence of $n_0\geq 1$ such that, for every $n\geq n_0$, \begin{equation}\label{eq:pf 1arm 3}
	\theta_n(p_c)\leq \frac{1}{10}\cdot \theta_{\lfloor n/2\rfloor}(p_c)+\frac{KC_1}{n^2}.
\end{equation}
From \eqref{eq:pf 1arm 3}, we prove by induction that $\theta_{2^k}(p_c)\leq \frac{C_2}{(2^k)^2}$ for every $k\geq 0$, for a well-chosen $C_2$. First, if $C_2\geq n_0^2$, we get that the bound holds for every $k\leq \log_2(n_0)$. We assume that the bound holds at $2^k\geq n_0$ and prove that it also holds at $n=2^{k+1}$. Applying \eqref{eq:pf 1arm 3} to this value of $n$ and using the induction hypothesis at $n/2$ gives
\begin{equation}
	\theta_{n}(p_c)\leq \frac{1}{10}\cdot \frac{C_2}{(n/2)^2}+\frac{KC_1}{n^2}.
\end{equation}
Setting $C_2:=n_0^2\vee \tfrac{5}{2}KC_1$ gives $\theta_n(p_c)\leq \tfrac{C_2}{n^2}$, which concludes the induction. It is routine to deduce the bound on $\theta_n(p_c)$ for every $n\geq 1$ from there.

We now turn to the lower bound in Theorem \ref{thm:onearm}. We follow the argument of \cite{vEGPSperco}. An alternative proof can be obtained through a direct second moment method \cite{sakai2004mean,KozmaNachmias}. By Theorem \ref{thm:sharplength}, there exists $c_1>0$ such that for every $p\in[p_c/2,p_c)$, one has $L(p)\geq c_1(p_c-p)^{-1/2}$. Thus, if $p_c-\frac{c_1^{2}}{n^2}< p \leq p_c$, one has $L(p)>n$ so that by \eqref{eq:derivative of theta}, 
\begin{equation}\label{eq:proof onearm 4}
	\frac{\mathrm{d}}{{\mathrm{d}}p}\theta_n(p)\geq \frac{1-\theta_n(p)}{2p(1-p)}.
\end{equation}
Integrating \eqref{eq:proof onearm 4} on the same interval of $p$ and using that $\theta_n(p_c)\rightarrow 0$ gives $c_2>0$ such that, for every $n$ large enough,
\begin{equation}
	-\log(1-\theta_n(p_c))\asymp \theta_n(p_c)\geq \frac{c_2}{n^2}.
\end{equation} 
This concludes the proof.
\end{proof}

\section{Near-critical lower bounds}\label{sec:nc lb}

In this last section, we establish the lower bounds in 
Theorems~\ref{thm:nc bounds phi(S)} and~\ref{thm:bounds 2pt}. The proofs follow the natural strategy of reversing the argument used to 
establish the upper bounds, replacing Lemma~\ref{lem:SL} with 
Theorem~\ref{thm:pre reversed SL}. The section is 
divided into two parts: the first concerns averaged lower bounds, and the second 
pointwise lower bounds. For simplicity, we write the arguments in the nearest-neighbour case. The extension to the spread-out case is straightforward. Recall that we have assumed that $d>6$ and that \eqref{eq: 2pt full space estimate} holds.

\subsection{Averaged near-critical lower bounds}
The goal of this subsection is to prove the lower bound in Theorem \ref{thm:nc bounds phi(S)}. We will deduce it from Corollary \ref{cor:partial monot} and a near-critical lower bound on the $\varphi_p(\Lambda_n)$ for $n\geq 1$. In fact, we will prove the following stronger result. 

We begin with some notations. For $1\leq i\leq d$, recall that $\mathbf{e}_i$ denotes the vector of the canonical basis of $i$-th coordinate equal to $1$. If $k\ge 1$, the \emph{facets} of the box $\Lambda_k$ are defined to be the $2d$ sets $F_i^\pm(k) := \Lambda_{\lfloor k/2\rfloor }(\pm k\mathbf e_i)\cap \partial \Lambda_k$, where 
$i\in \{1,\dots,d\}$. By extension, given $z\in \mathbb Z^d$, we call facets of the box $\Lambda_k(z)$, the sets $z+ F_i^\pm(k)$, $i\in \{1,\dots,d\}$. 
\begin{Prop}\label{prop: nc lb averaged} Let $\varepsilon\in (0,1)$. There exist $\alpha>0$ and $p_0\in (0,p_c)$ which only depend on $\varepsilon$ and $d$ such that the following holds for $p\in [p_0,p_c)$: setting $k:=\lfloor \varepsilon L(p)\rfloor\geq 1$, for every $N\ge 1$, every sequence $(x_j)_{1\le j\le N}$ with $x_1= 0$ and $x_{j+1} - x_j \in \{2k\mathbf e_1,\dots,2k\mathbf e_d\}$ for every $j\le N-1$, and any facet $F$ of $\Lambda_k(x_N)$, one has, with $Q:=\cup_{1\le j\le N} \Lambda_k(x_j)$, 
\begin{equation}
\sum_{y\in F} \tau_{Q,p} (0,y) \ge \alpha^N. 
\end{equation}
\end{Prop}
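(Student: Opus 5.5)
We argue by induction on $N$, gluing boxes one at a time with the partially reversed Simon--Lieb inequality (Theorem~\ref{thm:pre reversed SL}) in its effective form, Proposition~\ref{prop: effective reversed SL}. Write $Q_j:=\cup_{i\le j}\Lambda_k(x_i)$.

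\textbf{Base case $N=1$.} Since $k\le \varepsilon L(p)<L(p)$, the definition of $L(p)$ gives $\varphi_p(\Lambda_k)\ge \tfrac12$. Hence $\mathbb E_p[\mathcal P_{\Lambda_k}]\gtrsim 1$, and by symmetry the pioneers on any given full face carry mass $\gtrsim 1/(2d)$. To pass from a full face to the facet $F$ (half-width $\lfloor k/2\rfloor$), I would use Lemma~\ref{lem:KN}, which gives quasi-polynomial connection bounds in boxes of size at most $2L(p)$, together with translation invariance. The cleaner alternative is to require in the statement only a comparison between facet and face masses, and to obtain it from a Harris/FKG gluing. Either way this yields $\alpha_0=\alpha_0(\varepsilon,d)$ with $\sum_{y\in F}\tau_{\Lambda_k,p}(0,y)\ge \alpha_0$.

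\textbf{Induction step.} Suppose the bound holds for $Q_N$, and let $x_{N+1}=x_N+2k\mathbf e_i$. Let $F'$ be the facet of $\Lambda_k(x_N)$ facing $x_{N+1}$; it lies in $\partial Q_N$, and its outer neighbours lie in $\Lambda_k(x_{N+1})$. Apply Proposition~\ref{prop: effective reversed SL} with $S=Q_N$ and $\Lambda=Q_{N+1}$, where $\mathbf w$ is chosen at distance $\lfloor K/2\rfloor$ inside the new box, as allowed by Remark~\ref{rem:comparing mathcal X to pioneers}. Then sum over $y$ in the target facet $F$ of $\Lambda_k(x_{N+1})$; note $\tau_{Q_N,p}(0,y)=0$ for such $y$. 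This gives
\[
\sum_{y\in F}\tau_{Q_{N+1},p}(0,y)\ge c\sum_{u\in F'}\mathbb E_p\Big[\mathds 1\{u\in\mathcal X^K_{Q_N,Q_{N+1}}\}\Big(\sum_{y\in F}\tau_{Q_{N+1},p}(\mathbf w,y)-\sum_{y\in F}\mathsf E_{Q_{N+1}}(\mathbf w,y;\mathcal C(u;Q_{N+1});p)\Big)\Big].
\]
Because $\mathbf w$ lies within distance $K$ of the middle part of the corresponding face of $\Lambda_k(x_{N+1})$, the base-case argument, translated and with a constant loss from Lemma~\ref{lem:KN} and FKG, bounds $\sum_{y\in F}\tau_{\Lambda_k(x_{N+1}),p}(\mathbf w,y)$ from below by $\alpha_1>0$, uniformly.

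\textbf{Error term.} As in Claim~\ref{claim1}, we have $\sum_{y\in F}\tau_{Q_{N+1},p}(z,y)\le \varphi$-type sums, which Theorem~\ref{thm:main} bounds by $O(1+\mathrm{dist})$ via Claim~\ref{claim2}. Escapability and regularity then make $\sum_z\tau(\mathbf w,z)\cdot(\cdots)$ at most $\varepsilon'$ once $K$ is large. For $d=7$ I would again use the super-good refinement of Lemma~\ref{lem: bound mathcal X tilde}. It then remains to show that
\[
\sum_{u\in F'}\mathbb P_p\big[u\in \mathcal X^K\big]\gtrsim \sum_{u\in F'}\tau_{Q_N,p}(0,u).
\]
This follows from the averaging results Lemmas~\ref{lem: regular are in average also escapable } and~\ref{lem: pioneers are in average regular}, restricted to pioneers in $F'$ via the local-surgery/exploration argument. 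The exploration argument requires $Q_N$ to have diameter at most $L(p)$, which fails for large $N$. So I would instead localise: explore boxes of size $s^{2d}\le k$ around $F'$ only, which is possible since $k$ is large when $p$ is close to $p_c$. Combined with the induction hypothesis, which gives mass $\ge\alpha^N$ on $F'$, this yields $\alpha^{N+1}$ for $\alpha:=c\,c_0(\alpha_1-\varepsilon')\wedge\alpha_0$.

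\textbf{Main obstacle.} I expect the hardest step to be this localised ``pioneers on $F'$ are on average regular and escapable'' bound, uniformly in $N$. Lemma~\ref{lem: pioneers are in average regular} used $S\subset\Lambda_{L(p)}$ through Lemma~\ref{lem:KN}. Here one needs a version in which the comparison constant involves only connections inside boxes of size at most $2L(p)$ near $F'$, and does not depend on the global shape of $Q_N$.
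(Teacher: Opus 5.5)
\textbf{Gap 1: regularity at large scales.} Your skeleton matches the paper's. You run induction on $N$, apply Proposition~\ref{prop: effective reversed SL} with $S=Q_N$, $\Lambda=Q_{N+1}$, pioneers on $F'$ and $\mathbf w=u+\lfloor K/2\rfloor\mathbf e_i$, and then need ``pioneers on $F'$ are on average regular and escapable''. Your localised exploration in boxes of size $s^{2d}\le k$ is exactly how the paper treats the scales $K\le s\le k^{1/(2d)}$.

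However, the step you flag as the main obstacle is the real content of the induction step, and localisation does not resolve it. Regularity in $Q_{N+1}$ constrains \emph{all} scales $s\ge K$. For $s>k^{1/(2d)}$, Lemma~\ref{lem:KN} is unavailable. A union bound via Lemma~\ref{lem:Tsloc} only gives an $N$-independent error of order $e^{-c(\log k)^4}$. That cannot be compared with the induction mass $\sum_{y\in F'}\tau_{Q_N,p}(0,y)\ge\alpha^N$ once $N\gg(\log k)^4$.

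The missing idea is to replace $\mathcal T^{\textup{loc}}_s(y)$ by events living in $\Lambda_t(y)\cap Q_{N+1}$, with $t=\max(2s,s+L(p)(\log s)^2)$. Their failure probability is at most $2\exp(-c\frac{t-s}{L(p)}(\log s)^2)$. The volume part follows from Theorem~\ref{thm: asselah schapira}, since $Q_{N+1}\cap\Lambda_s(y)$ is a tube of width $\asymp L(p)$. The disjoint-arms part follows from the \emph{upper} bound of Theorem~\ref{thm:nc bounds phi(S)} and BK.
\begin{itemize}
\item For $s$ so large that $0\in\Lambda_t(y)$, this failure probability is directly smaller than the induction mass, which is $\ge e^{-\nu t_0/L(p)}$.
\item For smaller $s$, the event $\{0\connect{Q_N\:}\partial\Lambda_t(y)\}$ is independent of the bad event. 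A second use of the induction hypothesis, plus Lemma~\ref{lem:KN} at scale $k$, shows $\tau_{Q_N,p}(0,y)\ge\mathbb P_p[0\connect{Q_N\:}\partial\Lambda_t(y)]\,e^{-\frac{\nu}{4}t/L(p)}$. The extra factor $(\log s)^2$ in the exponent then wins.
\end{itemize}
Nothing in your proposal plays this role.

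\textbf{Gap 2: uniformity in $p$.} You invoke Lemma~\ref{lem:KN} and FKG ``with a constant loss'' in two places: the base case (face to facet), and the bound $\sum_{y\in F}\tau_{\Lambda_k(x_{N+1}),p}(\mathbf w,y)\ge\alpha_1$. But Lemma~\ref{lem:KN} only gives $c\,e^{-C(\log k)^2}$, and gluing through $x_{N+1}$ costs $\tau_p(\mathbf w,x_{N+1})\lesssim k^{2-d}$. Since $k\to\infty$ as $p\to p_c$, your $\alpha$ would depend on $p$. That defeats the purpose of the statement, which is used in the lower bound of Theorem~\ref{thm:nc bounds phi(S)}.

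The paper obtains both bounds uniformly in Lemma~\ref{lem:facet}. The orthant pieces $U_j\subset\partial\Lambda_j$ carry mass $\ge\rho$ by symmetry, since $j<L(p)$. A translate of $U_j$ fits inside $F$ when the starting point lies in $H$. Proposition~\ref{prop: effective reversed SL} is then used to move the starting point into $H$: once for \eqref{eq:facet1}, and through $H_1,\dots,H_5$ for \eqref{eq:facet2}.

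\textbf{Minor point: the error term.} Your treatment of the error term is heavier than needed. Restrict to $\Lambda_k(x_N)\cup\Lambda_k(x_{N+1})$ via \eqref{eq:effective SL 1}. Then the facet $F$ lies on the boundary of a half-space containing the domain. Theorem~\ref{thm:main} gives $\sum_{y\in F}\tau(a,y)=O(1)$, as in \eqref{connect face 2.5}. The computation \eqref{eq:pf main rep5} then already gives an error $O(K^{-1/2})$ in every $d>6$, with no need for Claim~\ref{claim2} or super-good points.
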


We are now in a position to prove the lower bound in Theorem \ref{thm:nc bounds phi(S)}.

\begin{proof}[Proof of the lower bound in Theorem~\textup{\ref{thm:nc bounds phi(S)}}] Let $C_0=C_0(d)>0$ be given by Corollary \ref{cor:partial monot}. Let $p<p_c$ to be chosen close enough to $p_c$. The result is clear if $S$ is infinite. We therefore assume that $S$ is finite and let $\ell$ be the smallest integer such that $S\subset \Lambda_\ell$. Observe that $\ell\leq \mathrm{diam}(S)\leq 2\ell$. We assume that $\ell\geq L(p)$, otherwise the bound $\varphi_p(S)\geq \tfrac{1}{2}$ holds by definition of $L(p)$. By Corollary \ref{cor:partial monot}, one has
\begin{equation}
	\varphi_p(S)\geq C_0^{-1}\cdot \varphi_p(\Lambda_\ell),
\end{equation}
so it suffices to find a lower bound on $\varphi_p(\Lambda_\ell)$. Set $k:=\lfloor \tfrac{L(p)}{4}\rfloor$, $N:=\lceil \tfrac{\ell}{k}\rceil$. Let $\alpha>0$ and $p_0\in (0,p_c)$ be given by Proposition \ref{prop: nc lb averaged} (with $\varepsilon=\tfrac{1}{4}$). Defining the sequence $(x_i)_{1\leq i \leq N}$ as: $x_1=0$ and $x_{i+1}=x_i+2k\mathbf{e}_1$, we find that (by definition) $F:=x_N+F^{+}_1(k)$ lies on the boundary of $\Lambda_{2(N-1)k+k}$. Observe that $\ell \leq 2(N-1)k+k\leq 4\ell$. Letting $Q:=\cup_{1\leq j\leq N}\Lambda_k(x_j)$, a new application of Corollary \ref{cor:partial monot} and Proposition \ref{prop: nc lb averaged} give
\begin{equation}
	\varphi_p(\Lambda_\ell)\geq C_0^{-1}\varphi_p(\Lambda_{2(N-1)k+k})\gtrsim \sum_{y\in F}\tau_{Q,p}(0,y)\geq \alpha^N\geq c_1\exp\Big(-C_1\frac{\textup{diam}(S)}{L(p)}\Big),
\end{equation}
for some $c_1,C_1>0$ (which depend only on $d$). Combining the above displayed equations concludes the proof.
\end{proof}

We turn to the proof of Proposition \ref{prop: nc lb averaged}. We will prove this result by induction over $N\geq 1$. Before moving to the proof, we start with a preparatory lemma. 

\begin{Lem}\label{lem:facet}
There exist $c,k_0>0$, such that for every $k_0\leq k<L(p)$, every facet $F$ of the box $\Lambda_k$, one has 
\begin{equation}\label{eq:facet1}
\sum_{y\in F} \tau_{\Lambda_k,p}(0,y) \ge c, 
\end{equation}
and
\begin{equation}\label{eq:facet2}
\inf_{z \in \Lambda_{\lfloor k/2\rfloor}(-k\mathbf e_1) } \sum_{y\in F} \tau_{Q_k,p}(z,y) \ge c,
\end{equation}
where $Q_k := \Lambda_k \cup \Lambda_k(-2k\mathbf e_1)$.
\end{Lem}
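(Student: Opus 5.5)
For \eqref{eq:facet1}, I will combine the definition of $L(p)$ with the machinery of Section~\ref{sec:reg,escap,effective}. Since $k<L(p)$, $\varphi_p(\Lambda_k)\ge \tfrac12$, so by \eqref{eq:comparison pioneers and phi} $\mathbb E_p[\mathcal P_{\Lambda_k}]\gtrsim 1$. The goal is to place a constant fraction of this mass on a given facet. By the symmetries of $\Lambda_k$ (reflections and coordinate permutations), all $2d$ faces $\{y\in\partial\Lambda_k: y_i=\pm k\}$ carry the same total mass, so each face carries at least $\tfrac{1}{2d}\mathbb E_p[\mathcal P_{\Lambda_k}]\gtrsim 1$.

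The difficulty is that the facet $F_i^\pm(k)$ is only the central part (of radius $k/2$) of a face, and the mass could concentrate near corners. To transfer mass from an arbitrary boundary point $u$ on the face $\{y_1=k\}$ to the central facet, I will use Lemma~\ref{lem:KN}-type connectivity combined with Proposition~\ref{prop:reversed SL}-type reasoning. Concretely, I would argue as follows. Apply Theorem~\ref{thm:pre reversed SL} (in the effective form, Proposition~\ref{prop: effective reversed SL}) with $S=\Lambda_{k/2}$ and $\Lambda=\Lambda_k$. This says $\tau_{\Lambda_k,p}(0,y)\gtrsim \sum_{u}\mathbb E[\mathds 1\{u\in\mathcal X^K_{\Lambda_{k/2},\Lambda_k}\}\,\tau_{\Lambda_k\setminus\mathcal C(u),p}(\mathbf w,y)]$. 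Summing over $y\in F$ and using Lemma~\ref{lem:KN} only gives a $\exp(-C(\log k)^2)$ loss, which is too weak, so instead I would use a facet-to-facet lower bound at scale comparable to $k$ via the Simon--Lieb reversal together with Theorem~\ref{thm:main}. The pioneers of $\Lambda_{k/2}$ have mass $\gtrsim 1$. From a regular escapable pioneer $u$ with witness $\mathbf w$, the quantity $\sum_{y\in F}\tau_{\Lambda_k}(\mathbf w,y)$ should be $\gtrsim 1$ by the same argument applied to a box centred near $\mathbf w$ of size $\asymp k$ contained in $\Lambda_k$, whose corresponding face overlaps $F$. The error term $\sum_{y\in F}\mathsf E_{\Lambda_k}(\mathbf w,y;A)$ is bounded, as in Claim~\ref{claim1}, by $\sum_{z\in A}\tau(\mathbf w,z)\sup_z\sum_{y\in F}\tau_{\Lambda_k}(z,y)\lesssim K^{-1/2}\cdot\sup_z\varphi$, which is small by Theorem~\ref{thm:main} and \eqref{eq:pf main rep5}. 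This is a multiscale induction in $k$ (dyadic scales from $k_0$), and the constant must not degrade. To guarantee this, I would seed the induction at the boundary scale directly using Proposition~\ref{prop: in average pioneers are regular and escapable}, so that no iteration is needed: one application with $S=\Lambda_{k/4}$ and $\Lambda=\Lambda_k$, and a lower bound on the face-to-central-facet mass from points within distance $k/4$ of the centre, which holds by symmetry plus the already-established face bound at scale $3k/4$.

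For \eqref{eq:facet2}, I will first use FKG. For $z\in\Lambda_{\lfloor k/2\rfloor}(-k\mathbf e_1)$, $z$ lies at distance $\le k/2$ from the centre of the common face between the two boxes. Consider the box $\Lambda_{k/2}(z)\subset Q_k$ (it lies inside $Q_k$ because $z$ is central on the interface). By \eqref{eq:facet1} at scale $k/2$ (assuming $k/2\ge k_0$), the facet of $\Lambda_{k/2}(z)$ in direction $+\mathbf e_1$, which lies in $\Lambda_k$, receives mass $\gtrsim 1$. I then glue this with a similar bound from points of that facet to $F$ within $\Lambda_k$, using Theorem~\ref{thm:pre reversed SL} with $S=\Lambda_{k/2}(z)\cap Q_k$ and $\Lambda=Q_k$ as in the proof of Proposition~\ref{prop.L(p).chi(p)}, controlling the additive error by the super-good/escapable machinery and Theorem~\ref{thm:main}.

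I expect the main obstacle to be showing that the facet (rather than the whole face) carries order-one mass uniformly in $k<L(p)$, that is, excluding concentration of pioneers near the edges of the box.
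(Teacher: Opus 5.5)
\textbf{Verdict: there is a genuine gap.} You correctly identify the obstacle: the boundary mass of $\Lambda_k$ could sit near the edges of a face rather than on the central facet $F$. But the step meant to overcome it is asserted, not proved, and the justification you sketch does not work.

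\textbf{The failing step in \eqref{eq:facet1}.} After applying Proposition~\ref{prop: effective reversed SL} with $S=\Lambda_{k/2}$, $\Lambda=\Lambda_k$, you need $\sum_{y\in F}\tau_{\Lambda_k,p}(\mathbf w,y)\gtrsim 1$ uniformly over the witnesses $\mathbf w$ you keep. This fails for two reasons.
\begin{enumerate}
\item You keep all regular escapable pioneers. For pioneers on the face of $\Lambda_{k/2}$ opposite to $F$, no cube centred at $\mathbf w$ and contained in $\Lambda_k$ even reaches the hyperplane containing $F$.
\item Even where such a cube $\Lambda_j(\mathbf w)$, $j=\mathrm d(\mathbf w,F)$, exists, its face only ``overlaps'' $F$. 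Its mass may again concentrate outside $F$, which is exactly the corner problem one scale down. The facet of the sub-cube is not contained in $F$ either, so an induction on facets does not close.
\end{enumerate}
Your seeding step has the same defect. For $|\mathbf w|\le k/4$, the faces of $\Lambda_{3k/4}(\mathbf w)$ lie on $\partial\Lambda_k$ only when $\mathbf w_i=\pm k/4$, and even then they merely overlap $F$. So the face bound at scale $3k/4$ puts no controlled mass on $F$.

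\textbf{The missing idea: an orthant symmetry reduction.} Let $U_j=\{z\in\partial\Lambda_j: z_1=j,\ z_i\ge 0\ \forall i\ge 2\}$.
\begin{enumerate}
\item Since $\partial\Lambda_j$ is covered by $d2^d$ symmetric copies of $U_j$, the definition of $L(p)$ gives $\sum_{y\in U_j}\tau_{\Lambda_j,p}(0,y)\ge\rho(d)>0$ for every $j<L(p)$.
\item Take $z$ in the half box $H=\Lambda_k\cap\Lambda_{k/2}(\pm k\mathbf e_i)$ adjacent to $F=F_i^\pm(k)$, and set $j=\mathrm d(z,F)\le k/2$. Then $\Lambda_j(z)\subset\Lambda_k$, and the copy of $U_j$ on the face of $\Lambda_j(z)$ pointing towards the centre of $F$ lies entirely in $F$.
\item Hence $\inf_{z\in H}\sum_{y\in F}\tau_{\Lambda_k,p}(z,y)\ge\rho$, with no induction and no loss of constants.
\end{enumerate}

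\textbf{How the argument then closes.} Restrict the reversed Simon--Lieb sum to pioneers of $\Lambda_{k/2}$ lying in an orthant $U_{k/2}\subset H$. Impose $\mathbf w=u\pm\lfloor K/2\rfloor\mathbf e_i\in H$, as allowed by Remark~\ref{rem:comparing mathcal X to pioneers}. By symmetry and Proposition~\ref{prop: in average pioneers are regular and escapable}, the expected number of such regular escapable pioneers is $\gtrsim\rho$. The error term is then handled as you indicated, via Theorem~\ref{thm:main} and the $K^{-1/2}$ computation.

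\textbf{The same gap in \eqref{eq:facet2}.} Your gluing step needs a uniform lower bound from points at distance up to about $2k$ from $F$. Statement \eqref{eq:facet1} concerns only the centre of the box, so it does not supply this. The paper obtains it by repeating the one-step transfer above along the translates $H_{i+1}=H_i-\tfrac k2\mathbf e_1$, $1\le i\le 5$. Each step uses a sub-box $\Lambda_{k/2}(z)$ with pioneers restricted to an orthant lying in $H_{i-1}$. This gives a bounded number of constant-factor losses, so the final constant is uniform in $k$.
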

\begin{proof}For simplicity, we assume that $k$ is even. The proof in the odd case follows the same route with only minor modifications. 
\vspace{5pt}

Let us start with the proof of~\eqref{eq:facet1}. For $1\le j<L(p)$, define $U_j := \{z \in \partial \Lambda_j : z_1 = j, \ z_i\ge 0 \text { for all }i\ge 2\}$. Since $\partial \Lambda_j$ is the union of $d 2^d$ copies of $U_j$, one has, using symmetry and the definition of $L(p)$, 
\begin{equation}\label{connect face 0}
\sum_{y\in U_j} \tau_{\Lambda_j,p}(0,y) \ge \frac{1}{d2^d}\frac{1}{2d p_c}\varphi_p(\Lambda_j)\ge \rho :=\frac {1}{2^{d+1}d^2p_c}.
\end{equation}
Now, fix $k<L(p)$ to be chosen large enough below, and a facet $F=F_i^{\pm}(k)$ of $\Lambda _k$. Let $H  := \Lambda_k\cap \Lambda_{k/2}(\pm\mathbf{e}_i)$, see Figure \ref{fig: lemma averaged lb} for an illustration. Note that for any $z \in H$, letting $j = \mathrm{d}(z,F)$, there exists a copy of $U_j$ on $\Lambda_j(z)$, which lies entirely in $F$. 
Denoting by $U_j(z,F)$ this copy of $U_j$, one has by~\eqref{connect face 0}, 
\begin{equation}\label{lemforH}
\sum_{y\in F} \tau_{\Lambda_k,p}(z,y) \ge \sum_{y\in U_j(z,F)} \tau_{\Lambda_j(z),p} (z,y) \ge \rho.  
\end{equation}
Observe next that one of the copies of $U_{k/2}$ lies entirely in $H$, and in fact by symmetry one can always assume that $U_{k/2}\subset H$. Let $K\geq 1$ to be chosen large enough and assume that $k\geq 4K$. We now use Proposition \ref{prop: effective reversed SL} to argue that there exists $c_1=c_1(K,d)>0$ such that 
\begin{align}
	\sum_{y\in F}\tau_{\Lambda_k,p}(0,y)&\geq c_1\sum_{y\in F}\sum_{\substack{u\in U_{k/2}}}\mathbb E_p\Big[\mathds{1}\{u\in \mathcal X^K_{\Lambda_{k/2},\Lambda_k}\}\Big(\tau_{\Lambda_k,p}(\mathbf{w},y)-\mathsf{E}_{\Lambda_k}(\mathbf{w},y;\mathcal C(u;\Lambda_k);p)\Big)\Big],\label{connect face 1}
\end{align}
where we used that $U_{k/2}\subset \partial \Lambda_{k/2}$. As mentioned in Remark \ref{rem:comparing mathcal X to pioneers} and since $k\geq 4K$, we may assume that for $u\in \mathcal X^K_{\Lambda_{k/2},\Lambda_k}\cap U_{k/2}$, one has $\mathbf{w}=u+\lfloor \tfrac{K}{2}\rfloor\mathbf{e}_1$.

By construction, the $\mathbf{w}$ given on the right-hand side of \eqref{connect face 1} almost surely satisfies $\mathbf{w}\in H$. Thanks to \eqref{lemforH} we obtain
\begin{equation}\label{connect face 2}
\sum_{y\in F}  \tau_{\Lambda_k,p}(\mathbf{w},y) \ge \rho.
\end{equation}
Moreover, for a $u$-good set $A$, one has $C_1=C_1(d)>0$ such that
\begin{equation}\label{connect face 2.5}
	\sum_{y\in F}\mathsf{E}_{\Lambda_k}(\mathbf{w},y;A;p)\lesssim \sum_{a\in A}\tau_{p_c}(\mathbf{w},a)\sum_{y\in F}\tau_{\Lambda_k,p_c}(a,y)\lesssim \sup_{n\geq 0}\varphi_{p_c}(\mathbb H_n) \cdot \sum_{a\in A}\tau_{p_c}(\mathbf{w},a)\leq \frac{C_1}{\sqrt{K}},
\end{equation}
where in the last inequality we used Theorem \ref{thm:main} and the same computation as in \eqref{eq:pf main rep5}. We now choose $K$ large enough so that $C_1/\sqrt{K}\leq \rho/2$. We deduce from all the above that 
\begin{equation}
\sum_{y \in F} \tau_{\Lambda_k,p}(0,y) \ge  \frac{c_1 \rho}{2} \cdot \mathbb E_p\big[|\mathcal X_{\Lambda_{k/2},\Lambda_k}^K\cap U_{k/2}|\big]\geq \frac{c_1 \rho}{2} \cdot \mathbb E_p\big[|\mathcal X_{\Lambda_{k/2}}^K\cap U_{k/2}|\big],
\end{equation}
where in the second inequality we used that (since $k\geq 4K$) a point $u\in U_{k/2}$ that is $K$-escapable is automatically $K$-escapable in $\Lambda_k$.
Using Proposition \ref{prop: in average pioneers are regular and escapable}, the fact that $k<L(p)$, and the same symmetry argument as in \eqref{connect face 0} yields that for $K$ even larger there exists $c_2=c_2(K,d)>0$, such that 
\begin{equation}\label{connect face 2.5 bis}
\mathbb E_p\big[|\mathcal X_{\Lambda_{k/2}}^K\cap U_{k/2}|\big]\ge c_2\rho.
\end{equation} 
Combining the last two displays concludes the proof of~\eqref{eq:facet1}. 
\vspace{5pt}

We now turn to the proof of \eqref{eq:facet2}. First, observe that if $F=F_{1}^-(k)$ then the result is immediate. Indeed, if $z\in \Lambda_{k/2}(-k\mathbf{e}_1)$ satisfies $z_1=-k+j$ with $j\in \{-k/2,\ldots,k/2\}$, there exists a translate of $U_j$ that entirely lies in $F$ so that the result follows by a similar computation as in \eqref{lemforH}. We therefore assume that $F\neq F^-_1(k)$. For simplicity, we even assume that $F=F^+_1(k)$. The argument for the general case follows a similar route, and we illustrate it in Figure \ref{fig: lemma averaged lb}. We define a sequence $(H_i)_{1\leq i \leq 5}$ as follows: $H_1=H$ and $H_{i+1}=H_i-\tfrac{k}{2}\mathbf{e}_1$. We prove by induction over $1\leq i \leq 5$ that,
\begin{equation}\label{eq: connect face 3}
	\inf_{z\in H_i}\sum_{y\in F}\tau_{Q_k,p}(z,y)\geq c_3^i,
\end{equation}
for some well-chosen $c_3=c_3(d)>0$. This is enough to get \eqref{eq:facet2} since $\Lambda_{k/2}(-k\mathbf{e}_1)\subset \cup_{1\leq i \leq 5}H_i$.  Observe that the case $i=1$ follows from \eqref{lemforH} if $c_3\leq \rho$, which we now assume. Let $2\leq i \leq 5$. Suppose that \eqref{eq: connect face 3} holds for $i-1$. We prove it for $i$. Let $z\in H_i$. As above, there exists a copy of $U_{k/2}$ which lies on $\Lambda_{k/2}(z)\cap H_{i-1}$. We call $U$ this copy. Using Proposition \ref{prop: effective reversed SL} as before, we get, for $K\geq 1$,
\begin{equation}
	\sum_{y\in F}\tau_{Q_k,p}(z,y)\geq c_1 \sum_{y\in F}\sum_{\substack{u\in U}}\mathbb E_p\Big[\mathds{1}\{u\in \mathcal X^K_{\Lambda_{k/2}(z),Q_k}\}\Big(\tau_{Q_k,p}(\mathbf{w},y)-\mathsf{E}_{Q_k}(\mathbf{w},y;\mathcal C(u;Q_k);p)\Big)\Big],
\end{equation}
where we assume (again) that, for $u\in \mathcal X_{\Lambda_{k/2}(z),Q_k}^K$, the $\mathbf{w}$ given by the $K$-escapability of $u$ in $Q_k$ satisfies $\mathbf{w}=u+\lfloor\tfrac{K}{2}\rfloor\mathbf{e}_1$. In particular, $\mathbf{w}\in H_{i-1}$.
By the induction hypothesis,  
\begin{equation}
	\sum_{y\in F}\tau_{Q_k,p}(\mathbf{w},y)\geq c_3^{i-1}.
\end{equation}
Moreover, the same computation as in \eqref{connect face 2.5} gives that for $K$ large enough and a $u$-good set $A$,
\begin{equation}
	\sum_{y\in F}\mathsf{E}_{Q_k}(\mathbf{w},y;A;p)\leq \frac{c_3^{4}}{2}.
\end{equation}
Combining the three above displays and \eqref{connect face 2.5 bis} gives
\begin{equation}
	\sum_{y\in F}\tau_{Q_k,p}(z,y)\geq c_3^{i-1}\cdot \frac{c_1}{2}\cdot \mathbb E_p\big[|\mathcal X_{\Lambda_{k/2}(z),Q_k}^K\cap U|\big]\geq c_3^{i-1}\cdot\frac{c_1c_2\rho}{2}.
\end{equation}
The induction step follows from choosing $c_3=\tfrac{c_1c_2\rho}{2}$.
\end{proof}

\begin{figure}[htb]
	\begin{center}
		\includegraphics[scale=1.1]{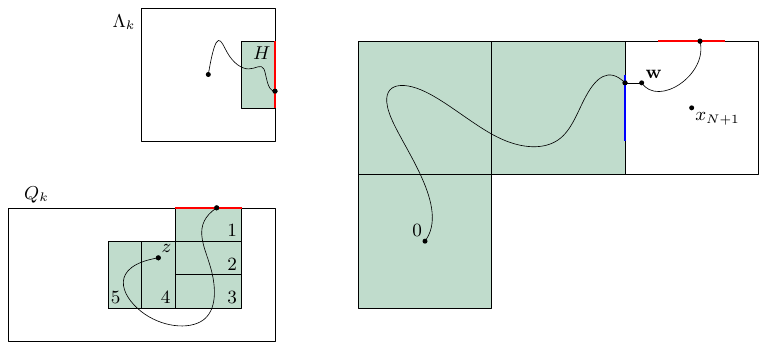}
		\caption{\textsc{Left}: An illustration of the proof of Lemma \ref{lem:facet}. The facet $F$ is the bold red line. On the bottom, we illustrated the sequence $(H_i)_{1\leq i \leq 5}$ in the case $F\neq F_1^+(k)$. \textsc{Right}: An illustration of the proof of Proposition \ref{prop: nc lb averaged}. The set $Q'$ is the light green region. The facet $F$ (resp. $F'$) is the bold red (resp. blue) line.}
		\label{fig: lemma averaged lb}
	\end{center}
\end{figure}

We are now in a position to prove Proposition \ref{prop: nc lb averaged}. We will consider restricted notions of regular and escapable points. 

\begin{proof}[Proof of Proposition~\textup{\ref{prop: nc lb averaged}}] Let $c,k_0>0$ be given by Lemma \ref{lem:facet}. Let $\varepsilon>0$. Let $\alpha,p_0$ to be fixed and $p\in [p_0,p_c)$. Define $k$ as in the statement.
 
We prove the result by induction on $N\geq 1$. Observe that the base case $N=1$ is exactly~\eqref{eq:facet1} if we ask $\alpha\leq c$ and $p$ sufficiently close to $p_c$ so that $k\geq k_0$. We therefore assume that the result holds for $N$ and consider a sequence of $N+1$ points as in the statement, as well as $F$ a facet of $\Lambda_k(x_{N+1})$. Let $Q' = \cup_{1\le j\le N} \Lambda_k(x_j) $ and $Q =\cup_{1\leq j\le N+1} \Lambda_k(x_j)$, see Figure \ref{fig: lemma averaged lb} for an illustration. If $F$ is already part of $Q'$, then the result follows from the induction hypothesis, hence we may assume that $F$ is not part of $Q'$.
 
Let $F'$ be the (unique) facet shared by $\Lambda_k(x_N)$ and $\Lambda_k(x_{N+1})$. Let $K\geq 1$. Using Proposition \ref{prop: effective reversed SL} as in the proof of Lemma \ref{lem:facet} gives $c_1=c_1(K,d)>0$ such that,
\begin{equation}
	\sum_{y\in F}\tau_{Q,p}(0,y)\geq c_1 \sum_{y\in F}\sum_{\substack{u\in F'}}\mathbb E_p\Big[\mathds{1}\{u\in \mathcal X^K_{Q',Q}\}\Big(\tau_{Q,p}(\mathbf{w},y)-\mathsf{E}_{Q}(\mathbf{w},y;\mathcal C(u;Q);p)\Big)\Big],
\end{equation}
where we (again) ask that, for $u$ as above, assuming that $F'=x_N+F^+_i(k)$, its associated $\mathbf{w}$ satisfies $\mathbf{w}=u+\lfloor\tfrac{K}{2}\rfloor\mathbf{e}_i$. By \eqref{eq:facet2}, one has
\begin{equation}
	\sum_{y\in F}\tau_{Q,p}(\mathbf{w},y)\geq \sum_{y\in F}\tau_{\Lambda_k(x_N)\cup \Lambda_k(x_{N+1}),p}(\mathbf{w},y)\geq c.
\end{equation}
Hence, reasoning as in the proof of Lemma \ref{lem:facet}, we obtain that, for $K$ large enough, there exists $c_2=c_2(K,d)>0$ such that 
\begin{equation}\label{cor:lowerbound1}
\sum_{y \in F} \tau_{Q,p}(0,y) \ge c_2\cdot \mathbb E_p\big[|\mathcal X_{Q',Q}^K \cap F'|\big]. 
\end{equation}
We will use the induction hypothesis to lower bound the expectation in \eqref{cor:lowerbound1}. We let $\mathcal X_{Q'}$ (resp. $\mathcal X_{Q',Q}^{K\textup{-reg}}$) denote the set of pioneers of $Q'$ (resp. the set of pioneers of $Q'$ that are $(K,1)$-regular in $Q$). Observe that $\mathcal X_{Q'}\supset \mathcal X_{Q',Q}^{K\textup{-reg}}\supset \mathcal X_{Q',Q}^K$. Using (a mild adaptation of) Lemma \ref{lem: regular are in average also escapable }, we find that for $K$ large enough, there exists $c_3=c_3(K,d)>0$ such that 
\begin{equation}\label{cor:lowerbound2}
\mathbb E_p\big[|\mathcal X_{Q',Q}^K \cap F'|\big] \ge c_3\cdot \mathbb E_p\big[|\mathcal X_{Q',Q}^{K\text{-reg}} \cap F'|\big]. 
\end{equation}
Hence, we need to show that most pioneers of $Q'$ lying in $F'$ are regular in $Q$. For this, we adapt the proof of Lemma \ref{lem: pioneers are in average regular}. Note that the previous argument breaks down in this setting, since 
Lemma~\ref{lem:KN} is no longer valid at scales $s>2L(p)$. 

Nevertheless, one can still make the following observation. If a point $y\in Q$ is not $(K,1)$-regular in $Q$, then one of the following must occur: there exists $K\le s \le k^{1/(2d)}$, such that $y$ is $s$-locally bad; or there exists $s>k^{1/(2d)}$, such that $|\mathcal C(y;Q) \cap \Lambda_s(y)|> s^4(\log s)^7$. If $y$ satisfies the latter property, we say that it is $s$-bad in $Q$. We may write, 
\begin{multline}\label{eq: pf averaged 1}
	\mathbb E_p\big[|\mathcal X_{Q',Q}^{K\text{-reg}} \cap F'|\big]\geq \mathbb E_p\big[|\mathcal X_{Q'}\cap F'|\big]-\sum_{K\leq s\leq k^{1/(2d)}}\mathbb E_p\big[|\mathcal{X}_{Q'}\cap \{y\in F': y\textup{ is $s$-locally bad}\}|\big]
	\\-\sum_{s> k^{1/(2d)}}\mathbb E_p\big[|\mathcal{X}_{Q'}\cap \{y\in F': y\textup{ is $s$-bad in $Q$}\}|\big]. 
\end{multline}

We first handle the first sum in \eqref{eq: pf averaged 1}.
Using the exact same argument as in the proof of Lemma \ref{lem: pioneers are in average regular}, we may prove that, for every $K$ large enough,
\begin{equation}\label{eq: pf averaged 1.5}
	\sum_{K\leq s\leq k^{1/(2d)}}\mathbb E_p\big[|\mathcal{X}_{Q'}\cap \{y\in Q: y\textup{ is $s$-locally bad}\}|\big]\leq \frac{1}{4}\cdot\mathbb E_p\big[|\mathcal X_{Q'}\cap F'|\big].
\end{equation}
This works because in this regime of $s$, Lemma \ref{lem:KN} can be applied.

We now bound the second term on the right-hand side of \eqref{eq: pf averaged 1}. If $s>k^{1/(2d)}$, we let
\begin{equation}\label{def:st}
t:= \max\big(2s ,s + L(p) (\log s)^2\big), 
\end{equation}
and 
\begin{align}
\widetilde {\mathcal T}_{s,Q}^{\textrm{loc}}(y) :=& \Big\{|\mathcal C\big(z; \Lambda_t(y)\cap Q\big)\cap \Lambda_s(y)|\leq s^4 (\log s)^4,\ \forall z \in \Lambda_s(y)\Big\} \\ 
\nonumber				&\cap \Big\{ \exists  \text{ at most }  (\log s)^3 \text{ disjoint open paths from }\partial \Lambda_s(y) \text{ to } \partial \Lambda_t(y)\text{ in }\Lambda_t(y)\cap Q\Big\}.		
\end{align}
We let $\mathcal A_1(y)$ and $\mathcal A_2(y)$ denote the two events above.
Observe that $\{y \text{ is $s$-bad in $Q$}\}\subseteq \widetilde {\mathcal T}_{s,Q}^{\textrm{loc}}(y)^c$. Our goal is now to bound $\mathbb P_p[\widetilde {\mathcal T}_{s,Q}^{\textrm{loc}}(y)]$. We do so by bounding successively $\mathbb P_p[\mathcal A_1(y)]$ and $\mathbb P_p[\mathcal A_2(y)]$.

 Since $x_{j+1}-x_j$ must be of the form $2k\mathbf{e}_i$ for some $1\leq i \leq d$, if $s>k$, there are of order $s/k\asymp s/L(p)$ boxes $\Lambda_k(x_j)$ in $\Lambda_s(y)$. Hence,
\begin{equation}
	|\Lambda_s(y)\cap Q|\asymp \frac{s}{L(p)}\cdot L(p)^d.
\end{equation}
If $s<k$, $|\Lambda_s(y)\cap Q|\asymp s^d$. Combining these observations with Theorem \ref{thm: asselah schapira}, we obtain $c_4=c_4(d)>0$ such that, for $p$ sufficiently close to $p_c$ and every $y \in \Lambda_s(y)$,
\begin{equation}
	\mathbb P_p\Big[|\mathcal C\big(z; \Lambda_t(y)\cap Q\big)\cap \Lambda_s(y)|> s^4 (\log s)^4\Big]\leq \exp\Big(-c_4\Big[\frac{s}{L(p)}\vee 1\Big]^{4(1-1/d)}(\log s)^4\Big).
	\end{equation}
	In particular, if $p$ is sufficiently close to $p_c$,
	\begin{equation}\label{eq: bound A1(y)}
		\mathbb P_p[\mathcal A_1(y)^c]\leq \exp\Big(-\frac{c_4}{2}\Big[\frac{s}{L(p)}\vee 1\Big]^{4(1-1/d)}(\log s)^4\Big).
	\end{equation}
	
Next, note that any path from $Q\cap \partial \Lambda_s(y)$ to $Q\cap \partial \Lambda_t(y)$ must cross order $(t-s)/k \asymp (t-s)/L(p)$ boxes. Letting $z\in \partial \Lambda_s(y)\cap Q$, the upper bound of Theorem \ref{thm:nc bounds phi(S)} gives $c_5=c_5(d)>0$ such that, for $p$ sufficiently close to $p_c$,
\begin{equation}
	\mathbb P_p[z\connect{Q\:}\partial \Lambda_t(y)]\leq \sum_{w\in \partial \Lambda_t(y)}\tau_{\Lambda_t(y),p}(z,w)\leq p^{-1}\varphi_p(\Lambda_t(y)-z)\leq \exp\Big(-c_5\frac{t-s}{L(p)}\Big).
	\end{equation} 
	Observing that $(t-s)/L(p)\geq (\log s)^2$, we additionally obtain, for $p$ even closer to $p_c$,
	\begin{equation}\label{eq: pre bound A2(y)}
		\mathbb P_p[\partial \Lambda_s(y)\connect{Q\:}\partial \Lambda_t(y)]\leq \exp \Big(-\frac{c_5}{2}\frac{t-s}{L(p)}\Big).
	\end{equation}
Together with \eqref{eq:BK ineq}, \eqref{eq: pre bound A2(y)} yields, for $p$ sufficiently close to $p_c$,
\begin{equation}\label{eq: bound A2(y)}
	\mathbb P_p[\mathcal A_2(y)^c]\leq \exp\Big(-\frac{c_5}{2}\frac{t-s}{L(p)} \cdot (\log s)^3\Big). 
\end{equation}
Combining \eqref{eq: bound A1(y)} and \eqref{eq: bound A2(y)} gives the existence of $c_6=c_6(d)>0$ such that, for every $s>k^{1/(2d)}$ and every $p$ sufficiently close to $p_c$,
\begin{align}\label{eq: pf averaged 2}
\begin{split}
\mathbb P_p\big[\widetilde {\mathcal T}_{s,Q}^{\textrm{loc}}(y)^c\big] & \le \exp\Big(-c_6\Big[\frac{s}{L(p)}\vee 1\Big]^{4(1-1/d)}(\log s)^4\Big)+\exp\Big(-c_6 \frac{t-s}{L(p)} \cdot (\log s)^3\Big)  
\\
& \le 2 \exp\Big(-c_6 \frac{t-s}{L(p)}\cdot (\log s)^2\Big),
\end{split}		
\end{align}
where in the second inequality we used that (by definition of $t$), $[\tfrac{s}{L(p)}\vee 1]^{4(1-1/d)}(\log s)^4\geq \tfrac{t-s}{L(p)}\cdot (\log s)^2$.

We now define 
\begin{equation}
s_0 = \inf\big\{s\ge  k^{\frac 1{2d}}: \exists y \in F' \text{ such that } 0 \in \Lambda_t(y)\big\}, 
\end{equation}
and let $t_0$ be the real number associated to $s_0$ by~\eqref{def:st}.  By the induction hypothesis, there is $\nu=\nu(\alpha)>0$, such that 
\begin{equation}\label{eq: pf averaged 3}
\mathbb E_p\big[|\mathcal X_{Q'}\cap F'|\big]=\sum_{y\in F'} \tau_{Q',p}(0,y) \ge \exp\Big(-\nu \frac{t_0}{L(p)}\Big). 
\end{equation}
Combining \eqref{eq: pf averaged 2} and \eqref{eq: pf averaged 3}, and using that $t_0-s_0 \ge t_0/2$, gives, if $p$ is sufficiently close to $p_c$,
 \begin{equation}\label{eq: pf averaged 4}
\sum_{y\in F'}\sum_{s\geq s_0}\mathbb P_p\big[\widetilde {\mathcal T}_{s,Q}^{\textrm{loc}}(y)^c\big] \le \frac 14  \sum_{y\in F'} \tau_{Q',p}(0,y).
\end{equation}
Plugging \eqref{eq: pf averaged 1.5} and \eqref{eq: pf averaged 4} in \eqref{eq: pf averaged 1}, and using the trivial inclusion $\{0\connect{Q'\:}y\}\subset\{0\connect{Q'\:}\Lambda_t(y)\}$ yields, for $K$ large enough and for $p$ sufficiently close to $p_c$,
\begin{equation}\label{eq: pf averaged 5}
	\mathbb E_p\big[|\mathcal X_{Q',Q}^{K\textup{-reg}}\cap F'|\big]\geq \frac{1}{2}\sum_{y\in F'}\tau_{Q',p}(0,y)-\sum_{s_0>s>k^{1/(2d)}} \sum_{y\in F'} \mathbb P_p[0\connect{Q'}\Lambda_t(y), \widetilde {\mathcal T}_{s,Q}^{\textrm{loc}}(y)^c]. 
\end{equation}
If $s< s_0$, the origin is not part of $\Lambda_t(y)$: the events $\{0\connect{Q'\:} \Lambda_t(y)\}$ and $\widetilde{\mathcal T}_{s,Q}^{\textup{loc}}(y)$ are measurable with respect to disjoint sets of edges and are therefore independent. Plugging this observation and \eqref{eq: pf averaged 2} in \eqref{eq: pf averaged 5} gives\begin{multline}\label{cor:facets.final}
\mathbb E_p\big[|\mathcal X_{Q',Q}^{K\text{-reg}} \cap F'|\big] \\  \ge \frac 12 \sum_{y \in F'} \tau_{Q',p}(0,y) - 2\sum_{s_0> s>k^{1/(2d)}} \sum_{y\in F'} \mathbb P_p[0\connect{Q'}\partial \Lambda_t(y)] \exp\Big(-  c_6\frac{t-s}{L(p)}\cdot (\log s)^2\Big).  
\end{multline}
To compare the two terms above, we lower bound $\tau_{Q',p}(0,y)$ (for a fixed $y\in F'$) in terms of $\mathbb P_p[0\connect{Q'\:}\partial \Lambda_t(y)]$. We proceed as follows. Reveal an open path from $0$ to $\partial \Lambda_t(y)$ in $Q'$. Such a path ends at a point $w_1\in \Lambda_k(x_{j_0})\cap \partial \Lambda_t(y)$ for some $j_0<N+1$. Observe that since $t\geq L(p)(\log s)^2$, one has $N+1-j_0\gtrsim (\log s)^2\geq 3$ if $p$ is sufficiently close to $p_c$. Let $Q'':=\cup_{j_0+2\le j\leq N}\Lambda_{k}(x_j)$. Let $F''$ denote the facet of $\Lambda_{k}(x_{j_0+1})$ that touches $\Lambda_k(x_{j_0+2})$. We first construct a path from $0$ to $x_{N}$ in $Q'$ by concatenating a path from $x_N$ to $F''$ in $Q''$ which ends at some point $w_2\in F''$, and then a path from $w_2$ to $w_1$ in $(\Lambda_k(x_{j_0})\cup \Lambda_k(x_{j_0+1}))\cap \Lambda_t(y)$. By Lemma \ref{lem:KN}, this gives $c_7=c_7(d)>0$ such that
\begin{equation}
	\tau_{Q',p}(0,x_{N})\geq \mathbb P_p[0\connect{Q'\:}\partial \Lambda_t(y)]\cdot \exp\big(-c_7(\log k)^2\big)\cdot \mathbb P_p[x_{N}\connect{Q''\:}F''].
\end{equation}
However, by the induction hypothesis,
\begin{equation}
	\mathbb P_p[x_{N}\connect{Q''\:}F'']\geq \frac{1}{|F''|}\sum_{w\in F''}\tau_{Q'',p}(x_{N},w)\geq \exp\Big(-c_8(\log k)-\nu\frac{t}{L(p)}\Big),
\end{equation}
Putting all the pieces together, and recalling that $t/L(p)\geq (\log s)^2\gtrsim (\log k)^2$ gives, for $p$ sufficiently close to $p_c$,
\begin{equation}
	\tau_{Q',p}(0,x_{N})\geq \mathbb P_p[0\connect{Q'\:}\partial \Lambda_t(y)]\cdot \exp\Big(-\frac{\nu}{2}\frac{t}{L(p)}\Big).
\end{equation}
Now, by the FKG inequality \eqref{eq:FKG} and Lemma \ref{lem:KN}, for $p$ sufficiently close to $p_c$,
\begin{equation}\label{eq: pf averaged 6}
	\tau_{Q',p}(0,y)\geq \tau_{Q',p}(0,x_{N})\cdot \tau_{\Lambda_{k}(x_{N}),p}(x_{N},y)\geq \mathbb P_p[0\connect{Q'\:}\partial \Lambda_t(y)]\cdot \exp\Big(-\frac{\nu}{4}\frac{t}{L(p)}\Big).
\end{equation}
Plugging \eqref{eq: pf averaged 6} in~\eqref{cor:facets.final} and using that $t-s\ge t/2$ gives, for $K$ large enough and $p$ sufficiently close to $p_c$, 
\begin{equation}\label{eq: pf averaged 7}
\mathbb E_p\big[|\mathcal X_{Q',Q}^{K-\text{reg}} \cap F'|\big] \ge \frac 14 \sum_{y \in F'} \tau_{Q',p}(0,y). 
\end{equation}
Combining \eqref{eq: pf averaged 7} with~\eqref{cor:lowerbound1} and~\eqref{cor:lowerbound2}, and picking $\alpha:=\tfrac{c_2c_3}{4}\wedge c$ concludes the proof of the induction step. 
\end{proof}

\subsection{Pointwise near-critical lower bounds}
We finally turn to the proof of the lower bound in Theorem \ref{thm:bounds 2pt}. For the proof, we will need the following lemma. 
\begin{Lem}\label{lem:nearcritic2pt}
There exists $\kappa=\kappa(d)>0$ such that, for every $p\in [p_c/2,p_c)$, and every $x\in \mathbb Z^d$ satisfying $|x|\leq \kappa L(p)$, 
\begin{equation}
\frac 12 \tau_{p_c}(0,x) \le \tau_p(0,x) \le \tau_{p_c}(0,x). 
\end{equation}
\end{Lem}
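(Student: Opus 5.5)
The upper bound $\tau_p(0,x)\le\tau_{p_c}(0,x)$ is immediate by monotonicity in $p$. For the lower bound, I would use Russo's formula. For the increasing event $\{0\connect{}x\}$ restricted to a large finite box (and then pass to the limit),
\begin{equation*}
\tau_{p_c}(0,x)-\tau_p(0,x)=\int_p^{p_c}\sum_{e}\mathbb P_q[e\text{ pivotal for }0\connect{}x]\,\mathrm dq .
\end{equation*}
If $e=\{a,b\}$ is pivotal, then the events $\{0\connect{}a\}$ and $\{b\connect{}x\}$ occur disjointly, possibly with the roles of $a$ and $b$ exchanged. The BK inequality therefore bounds the integrand by
\begin{equation*}
2\sum_{a\sim b}\tau_q(0,a)\,\tau_q(b,x)\lesssim \sum_{a}\tau_{p_c}(0,a)\,\tau_{p_c}(a,x),
\end{equation*}
where in the last step I use $\tau_q\le\tau_{p_c}$ and the fact that neighbours are at bounded distance.

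The next step is to control this convolution. Using \eqref{eq: 2pt full space estimate}, the naive estimate $\sum_a |a|^{2-d}|x-a|^{2-d}\asymp |x|^{4-d}$ would give
\begin{equation*}
\tau_{p_c}(0,x)-\tau_p(0,x)\lesssim (p_c-p)\,|x|^2\,\tau_{p_c}(0,x)\lesssim \frac{|x|^2}{L(p)^2}\,\tau_{p_c}(0,x),
\end{equation*}
by Theorem~\ref{thm:sharplength}. Choosing $\kappa$ small then yields the factor $\tfrac12$. The problem is that the sum $\sum_a\tau(0,a)\tau(a,x)$ does not converge for $d>4$ without cutoff; the far region contributes something of order $\chi$, which is infinite at $p_c$.

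This is the main obstacle, and I would handle it by keeping the $q$-dependence inside the integral. I bound the integrand by $\sum_a\tau_q(0,a)\tau_q(a,x)$ and split according to whether $|a|\le 2|x|$ or $|a|>2|x|$.

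On the near region $|a|\le 2|x|$, I use $\tau_q\le\tau_{p_c}$ and \eqref{eq: 2pt full space estimate}, which gives a contribution $\lesssim |x|^{4-d}$. Integrating over $q\in[p,p_c]$ costs $(p_c-p)\lesssim L(p)^{-2}$, so this part is at most $\kappa^2\tau_{p_c}(0,x)$ up to a constant.

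On the far region $|a|>2|x|$, I have $\tau_q(a,x)\lesssim |a|^{2-d}$. Using the upper bound of Theorem~\ref{thm:bounds 2pt} (or its proof, which is valid uniformly in $q\in[p,p_c)$ with $L(q)\ge L(p)$), the contribution is bounded by
\begin{equation*}
\sum_{|a|>2|x|}|a|^{4-2d}\,e^{-c|a|/L(q)}\lesssim |x|^{4-d}.
\end{equation*}
In fact this is true even without the exponential factor, since $2d-4>d$ for $d>4$. So the far region gives the same bound as the near one, and I need not worry about the susceptibility at all.

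Combining the two regions, the integrand is $\lesssim |x|^{4-d}$ uniformly in $q$. Hence
\begin{equation*}
\tau_{p_c}(0,x)-\tau_p(0,x)\le C(p_c-p)|x|^{4-d}\le C'\frac{|x|^2}{L(p)^2}\,\tau_{p_c}(0,x),
\end{equation*}
where I use Theorem~\ref{thm:sharplength} and the lower bound in \eqref{eq: 2pt full space estimate}. Taking $\kappa=(2C')^{-1/2}$ concludes; the case $x=0$ is trivial, and small $|x|$ is covered because $|x|^{4-d}\asymp|x|^2\tau_{p_c}(0,x)$ also holds with $1\vee|x|$. Rather than obstacles, the remaining points are the routine justification of Russo's formula on infinite volume (by finite-box approximation and monotone convergence) and keeping constants independent of $p$.
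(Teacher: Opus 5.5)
Your proof is correct and is essentially the paper's argument: Russo's formula and BK bound the derivative by $\sum_{u\sim v}\tau_q(0,u)\tau_q(v,x)\lesssim(1\vee|x|)^{4-d}\lesssim(1+|x|)^2\tau_{p_c}(0,x)$, and integrating over $[p,p_c]$ with Theorem~\ref{thm:sharplength} gives the factor $\tfrac12$ for small $\kappa$. The only flaw is your claimed ``main obstacle'', which is false: for $d>4$ the convolution $\sum_a\tau_{p_c}(0,a)\tau_{p_c}(a,x)$ converges, since the far region gives $\sum_{|a|>2|x|}|a|^{4-2d}\asymp|x|^{4-d}$, as you later notice yourself. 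The near/far split and the appeal to Theorem~\ref{thm:bounds 2pt} are therefore unnecessary; the paper simply uses $\tau_q\le\tau_{p_c}$ and \eqref{eq: 2pt full space estimate} directly.
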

\begin{proof} Let $\kappa>0$ to be fixed later.
The upper bound holds in fact for all $x\in \mathbb Z^d$, and follows by monotonicity of $\tau_p$ in $p$. For the lower bound, we combine Russo's formula (see \cite{GrimmettPercolation1999}), the BK inequality \eqref{eq:BK ineq}, and~\eqref{eq: 2pt full space estimate}, to obtain, for every $p\leq p_c$ and every $x\in \mathbb Z^d$, 
\begin{align}
\nonumber \frac{\mathrm{d}}{\mathrm{d}p}\tau_p(0,x) & \le \sum_{u\sim v} \mathbb P_p[\{0\connect{}u\}\circ \{v\connect{}x\}] \le \sum_{u\sim v} \tau_p(0,u)\tau_p(v,x) \lesssim \frac{1}{(1\vee |x|)^{d-4}} \\
& \lesssim (1+|x|)^2 \cdot \tau_{p_c}(0,x). 
\end{align}
Integrating the above inequality over the interval $[p,p_c]$ shows that, for every $|x|\le \kappa \cdot L(p)$, 
\begin{equation}\label{tauppc}
\tau_{p_c}(0,x) - \tau_p(0,x) \lesssim (p_c-p)(\kappa L(p))^2 \tau_{p_c}(0,x)\leq \frac{1}{2}\tau_{p_c}(0,x),
\end{equation}
where the last inequality follows from Theorem \ref{thm:sharplength} and by choosing $\kappa>0$ sufficiently small. Rearranging~\eqref{tauppc} concludes the proof of the lower bound. 
\end{proof}

We can now conclude the proof of Theorem \ref{thm:bounds 2pt}. 

\begin{proof}[Proof of the lower bound in Theorem~\textup{\ref{thm:bounds 2pt}}] 
Let $p\in [p_0,p_c)$ where $p_0<p_c$ will be fixed below. Let $\kappa>0$ be given by Lemma \ref{lem:nearcritic2pt}.

 If $|x|\le \kappa L(p)$, the result follows from Lemma \ref{lem:nearcritic2pt} together with \eqref{eq: 2pt full space estimate} and Theorem~\ref{thm:sharplength}. 
 
 Consider now $x=(x_1,\ldots,x_d)$ such that $|x|> \kappa L(p)$. By symmetry, we may assume that all the coordinates of $x$ are positive. Let $k := \lfloor \varepsilon L(p)\rfloor$, where $\varepsilon:=\tfrac{\kappa}{5}$. Take $p_0$ sufficiently close to $p_c$ so that $k\geq 1$. Consider a minimal (in length) sequence $0=x_0,\dots,x_N \in 2k\mathbb Z^d$, such that for any $j< N$, $x_{j+1} - x_j = 2k \mathbf e_i$, for some $i\in \{1,\dots,d\}$, and such that 
\begin{equation}
\mathrm{d}\big(x,\cup_{j\le N} \Lambda_k(x_j)\big)=  \mathrm{d}\big(x,\Lambda_k(x_N)\big) \in [k, 3k]. 
\end{equation}
Note that by construction $N\le d\tfrac{|x|}{k}$. Let $F$ be the facet of the cube $\Lambda_k(x_N)$ that is the closest to $x$, and set $Q := \cup_{j=0}^N \Lambda_k(x_j)$. By construction again, one has 
\begin{equation}
|x-u| \le 5 k \le \kappa L(p), \qquad \forall u\in F. 
\end{equation} 
Furthermore, by Proposition~\ref{prop: nc lb averaged}, one has $\alpha=\alpha(\kappa,d)>0$ such that, 
\begin{equation}\label{eq:nbrpioneer}
\sum_{y\in F} \tau_{Q,p}(0,y) \ge \alpha^N. 
\end{equation}
Observe that \eqref{eq:nbrpioneer} is enough to conclude if $|x|\ge L(p) (\log L(p))^2$. Indeed, combining it with Lemma \ref{lem:KN} and the fact that $N\lesssim |x|/L(p)$, gives, for some $c_1,\nu>0$ (which only depend on $d$) and for $p$ sufficiently close to $p_c$,  
\begin{align}\notag
\tau_p(0,x) &\ge \mathbb P_p[0\connect{Q\:}F] \cdot \inf_{y\in F}\tau_{\Lambda_{\mathrm{d}(x,F)},p}(y,x) \\&\ge  \Big(\frac 1{|F|}\sum_{y\in F} \tau_{Q,p} (0,y)\Big)\cdot \inf_{y\in F}\tau_{\Lambda_{\mathrm{d}(x,F)},p}(y,x) \notag
\\&\ge \alpha^N \exp\big(-c_1(\log L(p))^2\big)\notag
\\&\ge (1\vee |x|)^{2-d}\exp\Big(-\nu \frac{ |x|}{L(p)}\Big), 
\end{align}
which proves the result, thanks to Theorem~\ref{thm:sharplength}. 

We now assume that $|x|\le L(p) (\log L(p))^2$. We use a similar strategy as in the proof of Proposition \ref{prop: nc lb averaged}, except that since we are seeking a pointwise estimate, we need to add further restrictions to our set $\mathcal X_Q^K$: we impose a condition of local regularity around $x$. More specifically, for $K>0$, we let 
\begin{equation}
\mathcal X_Q^K(x) := \Big\{u\in  \mathcal X_Q^K : \mathcal C(u) \cap \Lambda_K(x) = \emptyset, |\mathcal C(u) \cap \Lambda_s(x)|\le 20 s^4 (\log s)^7, \ \forall s\ge K\Big\}. 
\end{equation}
Given $u\in F$, we say that a set $A$ is $u$-good if $\mathbb P_p[\mathcal C(u) = A]>0$ and if it is such that $u\in \mathcal X_Q^K(x)$. Additionally, as in the proof of Proposition \ref{prop: nc lb averaged}, we ask that for $u\in \mathcal X_Q^K(x)\cap F$, and writing $F=x_N+F^+_i(k)$ for some $1\leq i \leq d$, the $\mathbf{w}$ given by the $K$-escapability of $u$ satisfies $\mathbf{w}=u+\lfloor \tfrac{K}{2}\rfloor\mathbf{e}_i$. A minor modification of Proposition \ref{prop: effective reversed SL} (in which $\mathcal X_Q^K$ is replaced by $\mathcal X_Q^K(x)$) gives $c_2=c_2(K,d)>0$ such that
\begin{equation}\label{eq: pf pointwise 1}
	\tau_p(0,x)\geq c_2\sum_{u\in F}\mathbb E_p\Big[\mathds{1}\{u\in \mathcal X_Q^K(x)\}\Big(\tau_p(\mathbf{w},x)-\mathsf{E}(\mathbf{w},x;\mathcal C(u),p)\Big)\Big]
\end{equation}
 If $p$ is sufficiently close to $p_c$, so that $k\geq 2K$, one also has that $|\mathbf{w}-x|\leq \kappa L(p)$. By Lemma \ref{lem:nearcritic2pt}, this gives that $\mathbf{w}$ as above satisfies
 \begin{equation}
 	\tau_p(\mathbf{w},x)\gtrsim \frac{1}{L(p)^{d-2}}.
 \end{equation}
 Moreover, if $u\in F$ and $A$ is $u$-good, then \eqref{eq: 2pt full space estimate} yields
 \begin{equation}\label{eq: pf pointwise 2}
 	\mathsf{E}(\mathbf{w},x;A,p)=\sum_{a\in A}\tau_{p}(\mathbf{w},a)\tau_p(a,x)\lesssim \frac{1}{|x|^{d-2}}\Big(\sum_{a\in A\setminus \Lambda_{|x|/2}(x)}\tau_p(\mathbf{w},a)+\sum_{a\in A\setminus \Lambda_{|x|/2}(\mathbf{w})}\tau_p(a,x)\Big).
 \end{equation}
 The same computation as in \eqref{eq:pf main rep5} gives that
 \begin{equation}\label{eq: pf pointwise 3}
 	\sum_{a\in A}\tau_p(\mathbf{w},a)\lesssim \frac{1}{\sqrt{K}}.
 \end{equation}
 Now, the additional restriction imposed on $\mathcal X_Q^K(x)$ compared to $\mathcal X_Q^K$ allows one to bound the second sum in \eqref{eq: pf pointwise 2} as follows:
 \begin{equation}\label{eq: pf pointwise 4}
 	\sum_{a\in A}\tau_p(a,x)\lesssim \sum_{\ell\geq \log_2(K)}\frac{|A\cap \Lambda_{2^\ell}(x)|}{2^{\ell(d-2)}}\leq \sum_{\ell\geq \log_2(K)}\frac{20(2^\ell)^4\ell^7}{2^{\ell(d-2)}}\lesssim \frac{1}{\sqrt{K}}.
 \end{equation}
 Plugging \eqref{eq: pf pointwise 3} and \eqref{eq: pf pointwise 4} into \eqref{eq: pf pointwise 2} gives, for $K$ large enough,
 \begin{equation}
 	\tau_p(\mathbf{w},x)-\mathsf{E}(\mathbf{w},x;\mathcal C(u),p)\gtrsim \frac{1}{L(p)^{d-2}}.
 \end{equation}
 Plugging the above displayed equation in \eqref{eq: pf pointwise 1} then yields, for $K$ large enough, the existence of $c_3=c_3(K,d)>0$ such that
 \begin{equation}\label{eq: pf pointwise 4.5}
 	\tau_{p}(0,x)\geq \frac{c_3}{L(p)^{d-2}}\cdot\mathbb E_p\big[|\mathcal X_Q^K(x)\cap F|\big].
 \end{equation}
 It remains to control the expectation above. As in the proof of Proposition \ref{prop: nc lb averaged}, let $\mathcal X_Q^{K\text{-reg}}$ denote  the set of $K$-regular pioneers of $Q$, and define
\begin{equation}
\mathcal X_Q^{K\text{-reg}}(x) := \Big\{y\in  \mathcal X_Q^{K\text{-reg}} : |\mathcal C(y) \cap B(x,s)|\le s^4 (\log s)^7, \ \forall s\ge K\Big\}. 
\end{equation}
The same argument as in Lemma \ref{lem: regular are in average also escapable } gives that for $K$ potentially even larger, there is $c_4=c_4(K,d)>0$ such that
\begin{equation}\label{2pt.smallx2}
\mathbb E_p\big[|\mathcal X_Q^K(x)\cap F|\big]\ge c_4\cdot\mathbb E_p\big[|\mathcal X_Q^{K\text{-reg}}(x)\cap F|\big]. 
\end{equation} 
Recall the definition of the events $\mathcal T_s^{\text{loc}}(x)$ from~\eqref{eq:def ts loc}, and let 
\begin{equation}
\mathcal E_K(x) := \bigcap_{K\le s \le k^{1/(2d)}} \mathcal T_s^{\text{loc}}(x). 
\end{equation}
Now, 
\begin{multline}\label{eq: pf pointwise 5}
	\mathbb E_p\big[|\mathcal X_Q^{K\text{-reg}}(x)\cap F|\big]\geq \sum_{y\in F}\tau_{Q,p}(0,y)-\sum_{y\in F}\mathbb P_p[0\connect{Q\:}y, \: \mathcal E_K(x)^c]\\-\sum_{s:\: K\leq s\leq k^{1/(2d)}}\sum_{y\in F}\mathbb P_p[0\connect{Q\:}y, \: \mathcal T_s^{\textup{loc}}(y)^c]-\sum_{s> k^{1/(2d)}}\sum_{y\in F}\Big(\mathbb P_p[\mathcal T_s^{\textup{loc}}(x)^c]+\mathbb P_p[\mathcal T_s^{\textup{loc}}(y)^c]\Big)
\end{multline}
Note that $\mathcal E_K(x)$ is independent of the configuration of edges inside $Q$, and thus 
\begin{equation}
\sum_{y \in F} \mathbb P_p[0\connect{Q\:}y, \mathcal E_K(x)^c] = \mathbb P_p[\mathcal E_K(x)^c]\cdot \sum_{y\in F} \tau_{Q,p}(0,y).
\end{equation}
By Lemma \ref{lem:Tsloc}, if $K$ is large enough, $\mathbb P_p[\mathcal E_K(x)]\geq \tfrac{3}{4}$.

Moreover, as in \eqref{eq: pf averaged 1.5}, if $K$ is large enough
\begin{equation}
	\sum_{s:\: K\leq s\leq k^{1/(2d)}}\sum_{y\in F}\mathbb P_p[0\connect{Q\:}y, \: \mathcal T_s^{\textup{loc}}(y)^c]\leq \frac{1}{4}\sum_{y\in F}\tau_{Q,p}(0,y).
\end{equation}
Another use of Lemma \ref{lem:Tsloc} gives $c_5,c_6>0$ (which only depend on $d$) such that, if $p$ is sufficiently close to $p_c$,
\begin{equation}
	\sum_{s> k^{1/(2d)}}\sum_{y\in F}\Big(\mathbb P_p[\mathcal T_s^{\textup{loc}}(x)^c]+\mathbb P_p[\mathcal T_s^{\textup{loc}}(y)^c]\Big)\leq \sum_{s>k^{1/(2d)}}\exp\Big(-c_5(\log s)^4\Big)\leq \exp\Big(-c_6(\log k)^4\Big).
\end{equation}
Plugging the three last displayed equations in \eqref{eq: pf pointwise 5} gives, for $K$ large enough, and $p$ sufficiently close to $p_c$,
\begin{equation}
	\mathbb E_p\big[|\mathcal X_Q^{K\text{-reg}}(x)\cap F|\big]\geq \frac{1}{2}\sum_{y\in F}\tau_{Q,p}(0,y)-\exp\Big(-c_6(\log k)^4\Big).
\end{equation}
Using~\eqref{eq:nbrpioneer} and the fact that $|x|\le L(p) (\log L(p))^2$, we deduce that for $K$ large enough and $p$ sufficiently close to $p_c$, 
\begin{equation}
\mathbb E_p\big[|\mathcal X_Q^{K\text{-reg}}(x)\cap F|\big] \gtrsim \alpha^N. 
\end{equation}
Combining this with~\eqref{eq: pf pointwise 4.5}, \eqref{2pt.smallx2}, and the fact that $N\lesssim |x|/L(p)$ concludes the proof. 
\end{proof}

\bibliographystyle{plain}
\bibliography{biblio.bib}
\end{document}